\newtheorem{theorem}{Theorem}[section]
\newtheorem{corollary}[theorem]{Corollary}
\newtheorem{conjecture}[theorem]{Conjecture}
\newtheorem{question}[theorem]{Question}
\newtheorem{lemma}[theorem]{Lemma}
\newtheorem{proposition}[theorem]{Proposition}
\newtheorem{claim}{Claim}
\newtheorem{claimab}{Claim}
\newtheorem{claimI}{Claim}
\theoremstyle{definition}
\newtheorem{definition}[theorem]{Definition}
\def\q{\hfill\rule{1ex}{1ex}}
\begin{document}

\title{Circular Coloring and Fractional Coloring in Planar Graphs}

\author{Xiaolan Hu\thanks{School of Mathematics and Statistics \& Hubei Key Laboratory of Mathematical Sciences,
Central China Normal University, Wuhan 430079, People's Republic of China. Email: xlhu@mail.ccnu.edu.cn. Research partially supported by  National Natural Science Foundation of China (No. 11971196).} ~~and~~ Jiaao Li\thanks{School of Mathematical Sciences and LPMC, Nankai University, Tianjin 300071, People's Republic of China. Email: lijiaao@nankai.edu.cn. Research partially supported by  National Natural Science Foundation of China (No. 11901318) and Natural Science Foundation of Tianjin (No. 19JCQNJC14100).}}
\date{}
\maketitle

\begin{abstract}
We study the following Steinberg-type problem on circular coloring: for an odd integer $k\ge 3$, what is the smallest number $f(k)$ such that every planar graph of girth $k$ without cycles of length from $k+1$ to $f(k)$  admits a homomorphism to the  odd cycle $C_k$ (or equivalently, is circular $(k,\frac{k-1}{2})$-colorable).
Known results and counterexamples on Steinberg's Conjecture indicate that $f(3)\in\{6,7\}$. In this paper,  we show that $f(k)$ exists if and only if $k$ is an odd prime. Moreover, we prove that for any prime $p\ge 5$, $$p^2-\frac{5}{2}p+\frac{3}{2}\le f(p)\le 2p^2+2p-5.$$
We conjecture that $f(p)\le p^2-2p$, and  observe that the truth of this conjecture implies Jaeger's conjecture that every  planar graph of girth $2p-2$ has a homomorphism to  $C_p$ for any prime $p\ge 5$. Supporting this conjecture, we prove a related fractional coloring result  that every planar graph of girth $k$ without cycles of length from $k+1$ to $\lfloor\frac{22k}{3}\rfloor$ is fractional $(k:\frac{k-1}{2})$-colorable for any odd integer $k\ge 5$.
\\[2mm]
\textbf{Keywords:}  circular coloring; fractional coloring;   planar graphs; girth; cycle length
\end{abstract}

\section{Introduction}
 The circular chromatic number of a graph is a natural generalization of the chromatic number
of a graph, introduced by Vince \cite{Vince}.
For two positive integers $k$ and $d$ with $k\geq 2d$, a {\em circular $(k,d)$-coloring} of a graph $G$ is a mapping $\varphi: V(G)\rightarrow \{0,1,\ldots,k-1\}$ such that $d\leq |\varphi(u)-\varphi(v)|\leq k -d$ whenever $uv\in E(G)$. The circular chromatic number $\chi_c(G)$ of $G$ is defined as the infimum of rational
numbers $\frac{k}{d}$ for which $G$ has a circular $(k,d)$-coloring. Notice that a circular $(k,1)$-coloring of a graph $G$ is just an ordinary proper $k$-coloring of $G$. We call $\chi_c(G)$ a refined measure of coloring because $\chi(G)-1< \chi_c(G)\leq \chi(G)$ for every
graph $G$, as proved in \cite{BonHell,Vince}, where $\chi(G)$ is the chromatic number of $G$.
Perhaps one of the most intriguing  problems concerning circular coloring  of planar graphs is the following conjecture, motivated from the dual of Jaeger's circular flow conjecture \cite{Jaeger}.

\begin{conjecture}\label{CONJ:circular-conj}
For any planar graph $G$ of girth at least $2t$, $\chi_c(G)\le 2+\frac{2}{t}$.
\end{conjecture}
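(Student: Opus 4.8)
The plan is to attack Conjecture~\ref{CONJ:circular-conj} through its dual formulation in terms of nowhere-zero circular flows. By the coloring--flow duality for plane graphs (Tutte, and its circular refinement due to Goddyn--Tarsi--Zhang), a planar graph $G$ admits a circular $(k,d)$-coloring if and only if its plane dual $G^{*}$ admits a circular $\frac{k}{d}$-flow, and $G$ has girth at least $2t$ precisely when $G^{*}$ is $2t$-edge-connected. Hence Conjecture~\ref{CONJ:circular-conj} is equivalent to the assertion that every $2t$-edge-connected \emph{planar} graph carries a circular $\bigl(2+\frac{2}{t}\bigr)$-flow; for even $t=2s$ this is exactly the restriction to planar graphs of Jaeger's circular flow conjecture, that every $4s$-edge-connected graph has a modulo $(2s+1)$-orientation. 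I would take a counterexample $G$ with the fewest edges, argue that $G$ (equivalently $G^{*}$) contains no small reducible configuration, and then run a discharging argument on the plane embedding: assign each vertex and face an initial charge (for instance $\deg(v)-4$ and $\ell(f)-4$, summing to $-8$ by Euler's formula), use the girth hypothesis to control the charges of faces, and redistribute charge to reach a contradiction.

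The heart of the matter is a stock of \emph{reducible configurations}: short paths through degree-two vertices of $G$, or small edge-cuts of $G^{*}$, across which any circular $\bigl(2+\frac{2}{t}\bigr)$-coloring (flow) of a smaller graph extends. On the flow side the standard device is that a flow on a contraction lifts back provided the cut being reopened has enough edges relative to $t$; on the coloring side one tracks, for each still-uncolored vertex, the circular arc of admissible colors and checks that it remains long enough. I would first use this to recover the known partial results --- $\chi_{c}(G)\le 2+\frac{2}{t}$ for planar $G$ whose girth is at least some fixed multiple $ct$ of $t$ (the Lov\'asz--Thomassen--Wu--Zhang orientation machinery gives $c=3$, earlier arguments a larger constant) --- and then try to push $c$ down toward $2$ by exploiting planarity more aggressively in the discharging, using that faces of length exactly $2t$ are essentially rigid and hoping to amortize charge globally across the plane in a way that no general graph allows.

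The main obstacle, and the reason the conjecture remains open, is precisely closing the gap from girth $ct$ down to girth $2t$. The counterexamples of Han--Li--Wu--Zhang to the general (non-planar) circular flow conjecture show that $2t$-edge-connectivity alone does \emph{not} force a circular $\bigl(2+\frac{2}{t}\bigr)$-flow in a graph, so any proof of Conjecture~\ref{CONJ:circular-conj} must use planarity in an essential, global manner rather than through purely local reducibility; a discharging scheme that only reads off local girth cannot by itself reach the optimal constant. A realistic target for a self-contained write-up is therefore a partial or conditional statement --- girth at least $\lfloor\alpha t\rfloor$ for an explicit $\alpha$, or the full bound under an added hypothesis such as $4$-edge-connectivity of $G^{*}$ --- with the expectation that the unconditional conjecture needs a genuinely new idea, most plausibly a more flexible notion of ``partial flow'' that can be routed through the planar dual without paying the full edge-connectivity cost at every cut.
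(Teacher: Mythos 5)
The statement you were asked to prove is Conjecture~\ref{CONJ:circular-conj}, which is precisely that --- a conjecture. The paper does not prove it, and neither do you; your write-up is explicit and honest about this, ending with the admission that the conjecture remains open and that a discharging scheme reading only local girth cannot reach the optimal constant. So there is no ``paper's own proof'' to compare against, and your text should be read as a research plan, not a proof.

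That said, your plan and the paper's actual handling of the conjecture diverge in an instructive way. You outline the standard route: dualize to circular flows, take a minimal counterexample, build a library of reducible configurations, and discharge on the plane embedding --- exactly the machinery that produced the known partial results the paper cites (Lov\'asz--Thomassen--Wu--Zhang for girth $3t$, \cite{LWZ19} for girth $3t+1$), and you correctly identify that the Han--Li--Wu--Zhang counterexamples to the non-planar circular flow conjecture mean planarity must enter globally, not just through local edge-connectivity of small cuts. The paper, by contrast, does not attack Conjecture~\ref{CONJ:circular-conj} head-on at all. It formulates a Steinberg-type Conjecture~\ref{CONJ:f(p)} ($f(p)\le p(p-2)$, i.e.\ planar graphs of girth $p$ with no cycle of length in $[p+1,\,p(p-2)]$ are $C_p$-colorable) and shows in Proposition~\ref{PROP1.1-1.4} that this \emph{implies} Conjecture~\ref{CONJ:circular-conj} for $t=p-1$ when $p$ is an odd prime. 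The implication is via the $\frac{p-1}{2}$-$C_p$-replacement operation on every edge of a girth-$(2p-2)$ planar graph, which lengthens all cycles into the forbidden window and preserves $C_p$-colorability by Lemma~\ref{LEM:cycle-Ck-precoloring} and Proposition~\ref{PROP:d-k-replacement}. So the paper converts the girth hypothesis into a ``no intermediate cycle lengths'' hypothesis, where the allowed gap ($p+1$ to $p(p-2)$) is wide enough that a discharging argument can plausibly close it (the paper gets $2p^2+2p-5$ unconditionally in Theorem~\ref{THMmain-p}, and the fractional analogue in Theorem~\ref{THM:main-fraction-k}). What the paper's reduction buys is a reformulation in which the girth lower bound no longer needs to be pushed all the way to $2t$ by local arguments; what it costs is that the target conjecture only covers $t=p-1$ for prime $p$, and the improvement from the proved bound $2p^2+2p-5$ down to the conjectured $p(p-2)$ is exactly the hard part left open.

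One small technical caution on your plan as written: the charge assignment ``$\deg(v)-4$ and $\ell(f)-4$'' is the $t=2$ normalization; for general $t$ you need a $t$-dependent normalization (the paper's own discharging in Section~\ref{subsect-pf1.3} uses $\frac{p}{2}d(v)-(p+2)$ and $d(f)-(p+2)$), or else the face terms will not track the girth hypothesis correctly.
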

The $t=1$ case of this conjecture is the celebrated Four Color Theorem proved by Appel and Haken \cite{AH76} in 1976; the $t=2$ case is the classical Gr\"otzsch's theorem \cite{Grotzsch} from 1959 that every triangle-free planar graph is $3$-colorable. Conjecture \ref{CONJ:circular-conj} remains  open for each $t\ge 3$. A result of Hell and Zhu \cite{HZ00} shows that Conjecture \ref{CONJ:circular-conj} is true for $K_4$-minor-free graphs, a subclass of planar graphs.

When $t=2s$ is even, it is not hard to observe that a graph $G$ is circular $({2t+2},{t})$-colorable if and only if  $G$ admits a homomorphism to the odd cycle $C_{2s+1}$. Indeed, $\chi_c(C_{2s+1})=\frac{2s+1}{s}$ and each color class appears in exactly one  vertex of $C_{2s+1}$ under a circular $({2s+1},{s})$-coloring; thus a circular $({2s+1},{s})$-coloring is also called a {\em $C_{2s+1}$-coloring} for convenience. For partial results of Conjecture \ref{CONJ:circular-conj}, Dvo\v{r}\'{a}k and Postle \cite{DvoPos} showed that every planar graph of  girth at least 10 is $C_5$-colorable. In \cite{CL20}, by duality from flow results, a simpler proof of the Dvo\v{r}\'{a}k and Postle's result was obtained, and it was extended to the next case that every planar graph of  girth at least 16 is $C_7$-colorable. Independently, Postle  and Smith-Roberge \cite{PosteSR} also proved that every planar  graph of girth at least 16 is $C_7$-colorable through the density of $C_7$-critical graphs. The current best general  result was due to Lov\'asz, Thomassen, Wu, Zhang \cite{LTWZ13}, from the dual of their more general flow results, that for each even $t$, $\chi_c(G)\le \frac{2t+2}{t}$ for every  planar graph $G$ of girth at least $3t$. For odd $t$, a recent flow result in \cite{LWZ19} also showed that $\chi_c(G)\le \frac{2t+2}{t}$ for every  planar graph $G$ of girth at least $3t+1$.

Another influential  coloring problem on planar graphs is the Steinberg's Conjecture (see \cite{S93}) from 1976, which asserts that every planar graph without cycles of length $4$ or $5$ is $C_3$-colorable.
We ask the following generalization on $C_{k}$-coloring.
\begin{question}\label{QEST:f(k)}
For any integer $k\geq 3$,  what is the smallest number $f(k)$ such that every planar graph of girth $k$ without cycles of length from $k+1$ to $f(k)$  is $C_k$-colorable?
\end{question}

As an approach to the Steinberg's Conjecture, Erd\H{o}s  (see \cite{S93}) asked to bound and  determine $f(3)$. Abbott and Zhou \cite{AZ91} first established that $f(3)\le 11$.  The bounds are progressively improved to $f(3)\le 9$ by Borodin \cite{Boro96} and by Sanders and Zhao \cite{SZ-GC95} independently, and to $f(3)\le 7$ by Borodin, Glebov,  Raspaud, and Salavatipour \cite{BGRS05}, i.e., every
planar graph without cycles of length from $4$ to $7$ is $3$-colorable. However, Steinberg's Conjecture has been disproved recently by  Cohen-Addad, Hebdige,  Kr\'al',  Li, and Salgado \cite{CHKLS17}, i.e., there exists a planar graph without cycles of length $4$ or $5$ that is not $3$-colorable. Those results combine to imply that $f(3)\in \{6,7\}$.

Our first main result of this paper describes  the existence of   $f(k)$ for all $k$.
\begin{theorem}\label{THMmain-p}
  The value $f(k)$ exists as a finite number if and only if $k$ is an odd prime.
  Moreover,  for any prime $p\ge 5$, $$p^2-\frac{5p-3}{2}\le f(p)\le 2p^2+2p-5.$$
\end{theorem}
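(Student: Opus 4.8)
The statement splits into four essentially independent tasks, which I would attack separately: (a) if $k$ is even then $f(k)$ does not exist; (b) if $k$ is odd and composite then $f(k)$ does not exist; (c) for each prime $p\ge5$ one has $f(p)\le 2p^2+2p-5$ (in particular $f(p)$ is finite, the case $p=3$ being already covered by \cite{BGRS05}); and (d) the lower bound $f(p)\ge p^2-\frac{5p-3}{2}$ for primes $p\ge5$. For part (a), note that $C_k$ is bipartite when $k$ is even, so any graph admitting a homomorphism to $C_k$ is bipartite. Hence it suffices to exhibit, for every $N$, a \emph{non-bipartite} planar graph of girth $k$ with no cycle of length in $[k+1,N]$: take a $k$-cycle, fix two antipodal vertices, and join them by a new internally disjoint path of length $\ell$ with $\ell\ge N$ and $\tfrac k2+\ell$ odd. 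The only cycle lengths present are $k$ and $\tfrac k2+\ell$, the latter odd; so this graph is planar, has girth $k$, has no cycle of length between $k+1$ and $N$, and is not $C_k$-colorable, which forces $f(k)$ to be infinite.

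\emph{Part (b): odd composite $k$.} Write $k=ab$ with $a$ the least prime divisor and $b\ge3$. The tool I would use is the map $\pi\colon C_k\to C_a$, $i\mapsto i\bmod a$, which is a $b$-fold covering of graphs; lifting homomorphisms through this covering shows that a connected graph $G$ satisfies $G\to C_k$ \emph{if and only if} $G$ admits a homomorphism to $C_a$ whose winding number around every cycle of $G$ is divisible by $b$. Using this, I would build, for each $N$, a planar graph $G$ of girth $k$ with no cycle of length in $[k+1,N]$ in which the $k$-cycles are rigid (forced to wrap exactly once around $C_k$) and are linked so that in \emph{every} homomorphism to $C_a$ some cycle carries winding not divisible by $b$. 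The mechanism that makes this possible for composite $k$—and that has no analogue when $k$ is prime, where the sparsity forced by the girth/gap conditions eventually dissolves any such obstruction—is that the relevant obstruction can be made to live entirely inside the proper subgroup $a\mathbb Z_k\cong\mathbb Z_b$ of $\mathbb Z_k$. I expect this to be the hard part: one must arrange the linking so that the obstruction genuinely survives arbitrarily large gaps while still controlling the cycle spectrum (no short or medium cycle created).

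\emph{Part (c): upper bound.} I would run a discharging argument on a counterexample $G$ of minimum size—a planar graph of girth $p$ with no cycle of length in $[p+1,\,2p^2+2p-5]$ that is not $C_p$-colorable, embedded in the plane. First the usual structural reductions (2-connectedness, every face bounded by a cycle, minimum degree $\ge2$, short faces pairwise far apart), then a list of reducible configurations: most importantly, a maximal thread (path of degree-$2$ vertices) that is too long can be contracted, because a path of length $\ge p-1$ in $C_p$ realizes every colour difference in $\mathbb Z_p$—this is exactly where the primality of $p$ enters—and a $p$-face meeting few other faces can be removed and the colouring extended. With charge $\deg(v)-4$ on vertices and $\deg(f)-4$ on faces (total $-8$ by Euler), and discharging rules that route charge from large faces across long threads to the $p$-faces, the assumed gap of order $p^2$ should force a contradiction. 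Alternatively one can dualize: $G\to C_p$ is equivalent to $G^*$ having a mod-$p$-orientation, and one can invoke or adapt a flow result in the spirit of the $C_5$ and $C_7$ cases mentioned above; either route should give a bound of order $p^2$.

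\emph{Part (d): lower bound.} Rewrite the target as $f(p)\ge\frac{(2p-3)(p-1)}{2}$, so I need a non-$C_p$-colorable planar graph of girth $p$ all of whose cycles have length $p$ or at least $\frac{(2p-3)(p-1)}{2}$. The starting point is that every $p$-cycle is rigid in a $C_p$-colouring: a closed walk of length $p$ in $C_p$ must wrap exactly once (here coprimality with $p$ matters), so the colouring restricted to a $p$-cycle is a bijection onto $\mathbb Z_p$. Consequently a ``necklace''—$p$-cycles glued successively at single vertices and closed up cyclically—forces on the gluing vertices a constraint $\sum_i\pm s_i\equiv0\pmod p$ with freely chosen signs, where $s_i$ is the displacement of the $i$-th $p$-cycle between its two gluing vertices; choosing all $s_i=\frac{p-1}{2}$ and an odd number $<p$ of beads makes this unsatisfiable, while the only cycles present are the $p$-cycles and ``macroscopic'' cycles of length $\ge(\text{number of beads})\cdot\frac{p-1}{2}$. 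A plain necklace caps the gap at $\frac{(p-2)(p-1)}{2}$ (beyond $p-2$ beads the constraint becomes satisfiable, by Cauchy–Davenport), so to reach $\frac{(2p-3)(p-1)}{2}$ one must refine the construction—gluing the $p$-cycles in a denser planar pattern, or replacing single $p$-cycles by slightly larger rigid gadgets—so that the shortest macroscopic cycle has length $(2p-3)\cdot\frac{p-1}{2}$ while the spectrum stays clean. Nailing this constant, and the construction in part (b), are the two places where I expect the real work to lie.
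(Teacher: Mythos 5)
Your four-part decomposition is the right skeleton, and part (a) is fine (though the paper quietly restricts to odd $k$ and so never argues the even case). But each of the remaining parts has a genuine gap, and parts (b) and (d) also misjudge where the difficulty lies.

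\textbf{Part (b).} You plan a covering-space argument $\pi\colon C_k\to C_a$ with a winding-number obstruction, and you flag it as ``the hard part.'' In fact this direction is the easiest of the four, and the machinery you propose is heavier than needed. The paper's construction (Proposition~\ref{PROP:nonprimek}): write $k=st$ with $t\ge s>1$, take an $(m+1)$-cycle $z_0z_1\cdots z_m z_0$, and apply the $s$-$C_k$-replacement on every edge $z_iz_{i+1}$ with $0\le i\le m-1$ (i.e.\ replace the edge by a $k$-cycle in which the two vertices sit at distance $s$). By Lemma~\ref{LEM:cycle-Ck-precoloring}, in any $C_k$-coloring the difference $\omega(z_{i+1})-\omega(z_i)$ equals $\pm\frac{k\pm1}{2}\cdot s$, hence is a multiple of $s$ since $k=st$; telescoping, $\omega(z_m)-\omega(z_0)$ is a multiple of $s$, but the surviving edge $z_mz_0$ forces this difference to be $\pm\frac{k\pm1}{2}$ modulo $k$, neither of which is a multiple of $s$. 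The only cycles are of length $k$ or $\ge ms$, which is arbitrarily large. Your instinct that the obstruction ``lives in the subgroup $s\mathbb Z_k$'' is exactly right, but the realisation is completely elementary and does not need coverings, lifting, or any linking construction.

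\textbf{Part (d).} You correctly rewrite the target as $\frac{(2p-3)(p-1)}{2}$ and correctly observe that a plain necklace tops out around $\frac{(p-2)(p-1)}{2}$ by Cauchy–Davenport, so you need a denser gadget. The missing ingredient is a specific base graph: DeVos's $W_p$ (cited in the paper from~\cite{BKKW2004}), a $(2p-3)$-cycle $z_0\cdots z_{2p-4}$ with a hub $x$ joined to each $z_i$ by a new path of length $p-2$. This is planar, has girth $2p-3$, and is not $C_p$-colorable (if $\omega(x)=\omega(z_i)$ one gets a $(p-2)$-cycle coloured by $C_p$; otherwise the rim is coloured in $p-1$ colours, i.e.\ maps to a path, which is impossible for an odd cycle). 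Applying $\frac{p-1}{2}$-$C_p$-replacement on every edge of $W_p$ (Proposition~\ref{PROP:d-k-replacement} preserves non-$C_p$-colorability since $\gcd(\frac{p-1}{2},p)=1$) yields girth $p$ and no cycles of length $p+1$ through $(2p-3)\frac{p-1}{2}-1$, giving the bound.

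\textbf{Part (c).} This is where the real gap sits. Your discharging plan (charge $\deg-4$, route charge from large faces across long threads, contract overlong threads) is essentially the right shape for a graph with \emph{no} $p$-cycles, but the whole difficulty of this problem is that $p$-cycles are allowed and can cluster. Two pieces are missing. First, the key reducibility lemma is about \emph{necklaces}, i.e.\ threads in which some edges are replaced by $p$-cycles (Lemma~\ref{LEM:necklace-prime}); going through a $p$-cycle changes the available-colour set by $B\mapsto B+\{\pm\frac{p-1}{2}(d+1)\}$ for various $d$, and the growth $|B|\mapsto\min\{|B|+1,p\}$ requires the Cauchy–Davenport theorem, hence primality. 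Your stated reason for primality — that a thread of length $\ge p-1$ realises every colour difference — is not where $p$ prime is used: that fact holds for every odd $p$ (one only needs $\gcd(2,p)=\gcd(\frac{p-1}{2},p)=1$). Second, to make the discharging tractable, the paper first contracts the $p$-cycles: it builds $G'$ by replacing each facial $p$-cycle with exactly two attachment vertices by an edge, then $G''$ by ``sticking'' each remaining facial $p$-cycle to a single new vertex, and runs the discharging on $G''$ (with charges $\frac{p}{2}d(v)-(p+2)$ and $d(f)-(p+2)$, and a precoloured outer $p$-cycle so that the whole thing is an inductive precoloring-extension statement, Theorem~\ref{THM-main1-p}). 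Reducibility of long threads and big $(k_1,\dots,k_t)$-threads in $G''$ is then translated back to reducibility of necklaces and crown-necklaces in $G$ (Lemma~\ref{LEM:crown-prime}). Without the necklace/crown lemmas and the $G\to G'\to G''$ reduction, a naive discharging on $G$ has no mechanism to control clusters of $p$-cycles, and the argument would not close.
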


We suspect that the lower bound in Theorem \ref{THMmain-p} is  close to the exact value of $f(p)$, and propose the following conjecture for  upper bound.
\begin{conjecture}\label{CONJ:f(p)}
  For any prime $p\ge 5$, $f(p)\le p(p-2)$. That is, every planar graph of girth $p$ without cycles of length from $p+1$ to  $p(p-2)$  is $C_p$-colorable.
\end{conjecture}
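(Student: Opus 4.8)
The natural plan is a minimal-counterexample argument in the style of the planar colouring literature: extract strong structure from the absence of medium-length cycles, then finish by discharging. Suppose the statement fails and let $G$ minimise $|E(G)|$, and subject to that $|V(G)|$, among planar graphs of girth $p$ having no cycle of length in $[p+1,p(p-2)]$ and no homomorphism to $C_p$; then $G$ is $C_p$-critical, so every proper subgraph of $G$ is $C_p$-colourable. First I would collect the routine reductions. A cut of size at most two is reducible, since a vertex- or edge-colouring of $C_p$ may be rotated so that two partial homomorphisms agree on the cut; hence $G$ is $2$-connected and every face is bounded by a cycle, which by hypothesis has length exactly $p$ (a \emph{short face}) or at least $p(p-2)+1=(p-1)^2$ (a \emph{long face}). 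A maximal path of degree-$2$ vertices of length at least $p-1$ is reducible, because a walk of length $p-1$ from any vertex of $C_p$ reaches every vertex of $C_p$ (this uses that $p$ is odd), so we may delete its interior, colour the smaller graph, and extend; hence every thread, and in particular every arc of a short face between consecutive degree-$\ge 3$ vertices, has length at most $p-2$. Two distinct short faces cannot share an edge: two $p$-faces sharing a path of length $\ell$ with $1\le\ell\le p-1$ would bound together a cycle of length $2(p-\ell)$, which is at most $p-1$ when $\ell\ge\frac{p+1}{2}$ (contradicting girth) and lies in $[p+1,2p-2]\subseteq[p+1,p(p-2)]$ when $\ell\le\frac{p-1}{2}$ (forbidden), with the degenerate overlaps handled similarly. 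Consequently every edge of a short face is bordered on its other side by a long face, and every degree-$2$ vertex of $G$ lies on a long face.

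For the counting step I would use the girth-$p$ form of Euler's formula: put $\mu(v)=\tfrac{p-2}{2}\deg(v)-p$ on each vertex and $\mu(f)=\ell(f)-p$ on each face, so that $\sum_v\mu(v)+\sum_f\mu(f)=-2p$. Then $\mu(v)=-2$ for a degree-$2$ vertex, $\mu(v)\ge 0$ for $\deg(v)\ge 4$ (and $\mu(v)=\tfrac{p-6}{2}\ge 0$ for $\deg(v)=3$ when $p\ge 7$), $\mu(f)=0$ for a short face, and $\mu(f)\ge (p-1)^2-p=p^2-3p+1$ for a long face. The discharging rules then push charge from long faces to the negatively charged vertices, using that every short face is surrounded by long faces to reach the degree-$2$ vertices that lie on a short face. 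To win one must show each long face $f$ survives, i.e.\ sends out at most $\ell(f)-p$; since a degree-$2$ vertex on $f$ whose second face is short demands the full $2$ from $f$, this reduces to bounding the number of such vertices on $f$ in terms of the degree-$\ge 3$ vertices of $f$ and the additive slack $p$. An alternative is to carry the same structural analysis through the potential method, with a subgraph weight $w|V(H)|-|E(H)|\ge c$, in the spirit of the $C_5$- and $C_7$-critical edge-density bounds underlying the Dvo\v{r}\'{a}k--Postle and Postle--Smith-Roberge theorems cited above.

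The main obstacle is that last bound, and it is the reason the statement is only conjectured: with the naive rule a long face carrying, along a single length-$(p-2)$ arc of some short face, up to $p-3$ consecutive degree-$2$ vertices each demanding $2$ cannot pay, so one needs a genuinely complete catalogue of reducible configurations around short faces --- long arcs of degree-$2$ vertices, clusters of short faces meeting at shared corners, short faces incident to too few distinct long faces --- together with the precolouring-extension analysis governing when a $C_p$-colouring of the boundary of a short face extends across it, and very likely auxiliary gadget-replacement reductions for threads; and because the target $p(p-2)$ lies within about $\tfrac{p}{2}$ of the lower bound $p^2-\tfrac{5p-3}{2}$ of Theorem~\ref{THMmain-p}, the whole argument must be essentially tight, with the smallest primes (notably $p=5$) needing a separate hands-on treatment. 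Finally, it is worth remarking that the fractional relaxation the paper actually proves --- girth $k$, no cycles of length up to $\lfloor\tfrac{22k}{3}\rfloor$, fractional $(k:\tfrac{k-1}{2})$-colourability --- sidesteps precisely this bottleneck, because passing to fractional colourings removes the parity and number-theoretic obstructions intrinsic to homomorphisms into an odd cycle, which is why there a gap of forbidden cycle lengths merely \emph{linear} in $k$ already suffices.
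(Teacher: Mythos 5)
You are right to read this as an open \emph{conjecture} rather than a theorem: the paper never proves $f(p)\le p(p-2)$, and your proposal correctly stops short of claiming a proof. What you have sketched is, in broad strokes, the same framework the paper does use to establish the weaker bound $f(p)\le 2p^2+2p-5$ in Theorem~\ref{THM-main1-p}: a minimal counterexample with a precoloured $p$-face, reducibility of long threads by a ``colour-spread'' argument, the observation that distinct $p$-cycles cannot share an edge, and a girth-$p$ discharging with charges $\frac{p-2}{2}\deg(v)-p$ and $\ell(f)-p$. Two technical points worth tightening. First, your justification for thread reducibility (``a walk of length $p-1$ from any vertex of $C_p$ reaches every vertex'') is fine for plain threads and indeed only needs $p$ odd; but the paper must reduce \emph{necklaces} (threads with $p$-cycles substituted for edges), and there the colour-spread argument genuinely requires $p$ prime via Cauchy--Davenport (Lemma~\ref{LEM:necklace-prime}); this is where primality enters, not merely oddness, and it is also why $f(k)$ fails to exist for composite $k$ (Proposition~\ref{PROP:nonprimek}). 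Second, the paper's discharging is carried out not on $G$ directly but on an auxiliary graph $G''$ obtained by contracting each $p$-face to a single vertex or edge, precisely to tame the configurations around short faces that you identify as the obstruction.

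On the main point you raise --- that a long face flanking a length-$(p-2)$ arc of a short face cannot pay for all those degree-$2$ vertices under the naive rule --- you have put your finger on exactly the gap that separates the proved $2p^2+2p-5$ from the conjectured $p(p-2)$. Closing it would require a substantially finer catalogue of reducible configurations (most likely new precolouring-extension lemmas for clusters of short faces and for $(k_1,\dots,k_t)$-necklaces tighter than Lemma~\ref{LEM:crown-prime}), and since $p(p-2)$ is within roughly $\frac{p}{2}$ of the lower bound $p^2-\frac{5p-3}{2}$ from Proposition~\ref{PROP:p-prime-quadratic-bound}, any such argument must be nearly optimal at every step. Your closing observation about the fractional relaxation is also on the mark: Theorem~\ref{THM:main-fraction-k} gets away with a gap only linear in $k$ because fractional colouring replaces the arithmetic-progression rigidity of $C_k$-homomorphisms (Lemma~\ref{LEM:cycle-Ck-precoloring}) with a much more flexible size constraint on colour intersections (Lemmas~\ref{LEM:path-p} and \ref{LEM:cyclep}), so the reducibility lemmas kick in far sooner. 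In short: no gap in your logic, just an honest and accurate account of why the statement is a conjecture.
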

The following connection between Conjecture \ref{CONJ:circular-conj} and Conjecture \ref{CONJ:f(p)} is observed.
\begin{proposition}\label{PROP1.1-1.4}
 Let $p\ge 5$ be a prime. The truth of Conjecture \ref{CONJ:f(p)} implies the validity of Conjecture \ref{CONJ:circular-conj} for $t=p-1$. That is, Conjecture \ref{CONJ:f(p)} implies that every  planar graph of girth at least $2p-2$ is $C_p$-colorable.
\end{proposition}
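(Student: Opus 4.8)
The plan is to apply Conjecture~\ref{CONJ:f(p)} not to the given graph but to an auxiliary graph built from it by an edge-replacement operation that preserves $C_p$-colorability and, at the same time, forces every cycle either to have length exactly $p$ or to have length just above $p(p-2)$. Let $G$ be a planar graph of girth at least $2p-2$; proving $G\to C_p$ is exactly what the statement asks. The first idea is that one should \emph{not} subdivide edges (subdividing an edge cannot force the colors of its two ends to be adjacent in $C_p$), but rather replace each edge by a short odd cycle, whose homomorphisms to $C_p$ are rigid. Concretely, assuming $G$ has an edge (otherwise $G\to C_p$ trivially), I would form $H$ from $G$ by replacing each edge $uv$ with two internally disjoint paths between $u$ and $v$, of lengths $\frac{p-1}{2}$ and $\frac{p+1}{2}$, using new internal vertices for each edge and drawing them in a small neighborhood of the old edge so that $H$ stays planar. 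Each such replacement is a copy of $C_p$ with $u$ and $v$ placed at distance $\frac{p-1}{2}$ on it.

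The first step of the proof is to show that $H\to C_p$ if and only if $G\to C_p$. This rests on the rigidity of homomorphisms $C_p\to C_p$: such a homomorphism, read around the cycle, is a closed walk of length $p$ in $C_p$, that is, a sequence of $\pm 1$-steps summing to $0\bmod p$, and since $p$ is odd all the steps must be equal, so every homomorphism $C_p\to C_p$ is a winding map $x_i\mapsto a\pm i$. Hence in any $C_p$-coloring of one of the gadgets the two attachment vertices receive colors at circular distance exactly $\frac{p-1}{2}$, and conversely every such pair of colors extends over the gadget. Therefore $H\to C_p$ holds exactly when $G$ admits a homomorphism to $\mathrm{Cay}(\mathbb{Z}_p,\{\pm\frac{p-1}{2}\})$, and this Cayley graph is isomorphic to $C_p$ because $\gcd(\frac{p-1}{2},p)=1$; this is the one place where primality of $p$, inherited from Conjecture~\ref{CONJ:f(p)}, enters.

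The second step is to verify that $H$ satisfies the hypotheses of Conjecture~\ref{CONJ:f(p)}: it is planar, has girth $p$, and has no cycle of length in $\{p+1,\dots,p(p-2)\}$. The key observation is that every interior vertex of a gadget has degree $2$ in $H$, so once a cycle of $H$ enters a gadget at an attachment vertex it must leave at the other attachment vertex along a full path of length $\frac{p-1}{2}$ or $\frac{p+1}{2}$, unless the cycle is exactly that gadget's $p$-cycle. Consequently every cycle of $H$ is either one gadget, of length $p$, or is obtained from a cycle of $G$, necessarily of length at least $2p-2$ since the girth of $G$ is at least $2p-2$, by substituting one of the two paths for each of its edges; such a cycle has length at least $(2p-2)\cdot\frac{p-1}{2}=(p-1)^2=p(p-2)+1$, just beyond the forbidden interval. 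Now Conjecture~\ref{CONJ:f(p)} gives $H\to C_p$, and the first step yields $G\to C_p$, which is precisely the assertion of Conjecture~\ref{CONJ:circular-conj} for $t=p-1$. I expect the only delicate point to be this last cycle-length accounting, and in particular the identity $(p-1)^2=p(p-2)+1$ that makes $p(p-2)$ the natural threshold to aim at in Conjecture~\ref{CONJ:f(p)}; the rest is routine.
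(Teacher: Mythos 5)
Your argument is correct and is essentially the paper's own proof: you construct the graph the paper calls $G(\tfrac{p-1}{2},p)$ by $(\tfrac{p-1}{2})$-$C_p$-replacement on each edge, verify its girth and the forbidden cycle lengths via $(2p-2)\cdot\tfrac{p-1}{2}=(p-1)^2=p(p-2)+1$, and transfer the coloring back using the rigidity of $C_p\to C_p$ homomorphisms, which the paper packages as Lemma~\ref{LEM:cycle-Ck-precoloring} and Proposition~\ref{PROP:d-k-replacement}. One slight misattribution: $\gcd(\tfrac{p-1}{2},p)=1$ holds for every odd $p\ge 3$, not only primes, so that step does not use primality; primality matters only because Conjecture~\ref{CONJ:f(p)} is stated for primes, in line with Proposition~\ref{PROP:nonprimek} showing $f(k)$ fails to exist for odd composite $k$.
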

Proposition \ref{PROP1.1-1.4} indicates that proving Conjecture \ref{CONJ:f(p)} may be difficult. But on the other hand, it  also suggests that Conjecture \ref{CONJ:f(p)} may provide a possible new approach to solve Conjecture \ref{CONJ:circular-conj} for $t=p-1$ with odd prime $p$. Particularly, the $p=5$ case of  Conjecture \ref{CONJ:f(p)} not only  implies that every  planar graph of girth $8$ is $C_5$-colorable, but also implies the Five Coloring Theorem as shown in Proposition \ref{PROPconto5color}.


\medskip
The fractional chromatic number of a graph is another well-known variation of the chromatic
number. For positive integers $a$ and $b$ with $a\ge b$, a {\em fractional  $(a:b)$-coloring} $\varphi$  of a graph $G$ is
a set coloring such that  each vertex assigns a $b$-element
subset of $\{1,\ldots, a\}$ satisfying $\varphi(u)\cap \varphi(v)=\emptyset$ whenever $uv\in E(G)$.
The {\em fractional chromatic number} of $G$, denoted by $\chi_f(G)$, is the infimum of the
fractions $\frac{a}{b}$ such that $G$ admits a fractional $(a:b)$-coloring. Notice that a fractional $(a :1)$-coloring of a graph $G$ coincides with an ordinary proper $a$-coloring of $G$.
The fractional coloring was first introduced by Hilton, Rado, and Scott \cite{planfr5} in 1973 to seek for a proof of the Four Color Problem.
Since then, it has been the focus of many intensive research efforts, see \cite{ScheinermanUllman2011}.
 For a graph $G$, let $\omega(G)$ and $\alpha(G)$ denote
the clique number and the independence number of $G$, respectively. It is well-known (cf. \cite{Zhu01, Zhu06}) that
$$\max\{\omega(G),\frac{|V(G)|}{\alpha(G)}\}\leq \chi_f(G)\leq \chi_c(G)\leq \lceil \chi_c(G)\rceil=\chi(G).$$

One may also consider the fractional coloring variations of Conjecture \ref{CONJ:circular-conj} and Question \ref{QEST:f(k)}. Analogous to Conjecture \ref{CONJ:circular-conj}, Naserasr \cite{Nase13} conjectured that every planar graph of girth at least $2s+2$ is fractional $(2s+1:s)$-colorable. It is proved for $K_4$-minor-free graphs in \cite{BFN17, GX16} that every $K_4$-minor-free graph of girth at least $2s$ is fractional $(2s+1:s)$-colorable.

Our second main result provides a fractional coloring result of Question \ref{QEST:f(k)}, which particularly confirms the fractional coloring version of Conjecture \ref{CONJ:f(p)} for prime $p\ge 11$ in a strong sense.

\begin{theorem}\label{THM:main-fraction-k}
For any odd integer $k\ge 5$, every planar graph of girth $k$ without cycles of length from $k+1$ to $\lfloor\frac{22k}{3}\rfloor$  is fractional  $(k:\frac{k-1}{2})$-colorable.
\end{theorem}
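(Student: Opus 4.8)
The plan is to reformulate the statement as a homomorphism problem and then argue by discharging on a minimal counterexample. A fractional $(k:\frac{k-1}{2})$-coloring of $G$ is exactly a homomorphism to the Kneser graph $\mathcal{K}=K(k,\frac{k-1}{2})$ on the $\frac{k-1}{2}$-subsets of $\{1,\dots,k\}$, two subsets adjacent when disjoint; this is the odd graph $O_{(k+1)/2}$. It is vertex-transitive, indeed arc-transitive, it is non-bipartite with odd girth exactly $k$ (the ``interval'' sets $\{i,i+1,\dots,i+\frac{k-3}{2}\}$ taken modulo $k$ span a $C_k$), and it has diameter $\frac{k-1}{2}$. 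From these facts I would deduce a \emph{thread-extension lemma}: if a path in $G$ all of whose internal vertices have degree $2$ has length at least $\frac{3k-1}{2}$, then every homomorphism to $\mathcal{K}$ of the graph obtained by deleting the path's interior extends over the whole path. Indeed, one can walk between the two prescribed endpoint colors in at most $\frac{k-1}{2}$ steps and then realize any remaining length of at least $k$ — hence any total length at least $\frac{3k-1}{2}$ — by inserting back-and-forth edges (each adding $2$) and, if parity requires it, one circuit around a $C_k$ through an endpoint's color (adding the odd number $k$).

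Let $G$ now be a counterexample minimizing $|V(G)|+|E(G)|$, together with a fixed plane embedding. Standard minimality arguments — using vertex- and arc-transitivity of $\mathcal{K}$ to reconcile colorings across $1$-cuts and bridges — show that $G$ is $2$-connected, has $\delta(G)\ge 2$, and is not itself a cycle (a cycle of length $\ge k$ has a homomorphism to $C_k\subseteq\mathcal{K}$). Suppress the degree-$2$ vertices to obtain $H$ with $\delta(H)\ge 3$; the faces of $G$ and $H$ correspond, and each face of $G$ decomposes into \emph{threads} (maximal degree-$2$ subpaths) whose lengths sum to the face length. The hypothesis forces every face to be a cycle of length $k$ (a \emph{short} face) or of length at least $\lfloor\frac{22k}{3}\rfloor+1$ (a \emph{long} face), and the thread-extension lemma forces every thread to have length at most $\frac{3(k-1)}{2}$. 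The central structural fact is that \emph{no two short faces share an edge}: if short faces $f_1,f_2$ both contained a thread $T$ of length $\ell$, their boundaries would close through internally disjoint paths of length $k-\ell$ (a vertex internal to both would have degree $\ge 3$), producing a cycle of even length $2(k-\ell)$; but for $1\le\ell\le k-1$ this number is either smaller than $k$ or lies in $\{k+1,\dots,\lfloor\frac{22k}{3}\rfloor\}$, contradicting the hypothesis on cycle lengths. Consequently every edge of a short face also bounds a long face, and — a long face having perimeter at least $\lfloor\frac{22k}{3}\rfloor+1$ with each of its threads shorter than $\frac{3k}{2}$ — every long face has at least five threads. I would also record a few further small reducible configurations around degree-$3$ corners and around short faces with only two or three corners.

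The contradiction then comes from discharging. Assign each vertex $v$ the charge $\deg_G(v)-4$ and each face $f$ the charge $\ell(f)-4$; by Euler's formula the total is $-8$, the only negative charges being $-2$ at each degree-$2$ vertex and $-1$ at each degree-$3$ vertex, while every face starts with charge at least $k-4\ge 1$. The rules pass charge from faces to the degree-$2$ and degree-$3$ vertices on their boundaries and, using that short faces meet only long faces, route the large surplus of the long faces through the shared threads to the ``poor'' short faces, namely those with only two or three corners, which by themselves cannot pay for their own degree-$2$ vertices. One then checks that after discharging every vertex and every face is nonnegative, contradicting the total $-8$; the quantity $\lfloor\frac{22k}{3}\rfloor$ is precisely the threshold at which a long face of that perimeter — in the extremal situation essentially ringed by $2$-corner short faces whose threads are as long as the girth condition permits — still retains exactly enough charge to meet all of its obligations. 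Designing discharging rules that survive this extremal configuration, and enumerating the ways short faces and long threads can cluster around a single long face, is the technical heart of the argument and the main obstacle; a smaller secondary point is sharpening the thread-extension lemma (diameter plus one odd circuit) enough that the ``at least five threads per long face'' estimate does not by itself force a constant worse than $\frac{22}{3}$.
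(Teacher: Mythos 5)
Your high-level framework matches the paper's: reformulate as a homomorphism to the odd graph $\mathcal{K}=K(k,\frac{k-1}{2})$, take a minimal counterexample, prove 2-connectedness, establish reducibility of long threads and some branching configurations, then discharge. But the proposal leaves the parts of the argument that actually carry the proof unfilled, and in a couple of places the quantitative bounds you set up are too weak to make the discharging close.

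The clearest issue is the thread-extension lemma. You argue from ``diameter plus one odd circuit,'' getting that threads of length at least $\frac{3k-1}{2}$ are reducible. The paper proves something considerably sharper: in Lemmas~\ref{LEM:path-p}--\ref{LEM:necklace-t-large} it works out exactly which values of $|\varphi(x)\cap\varphi(y)|$ are realizable across a path (or a \emph{necklace}, a path with some edges blown up into $k$-cycles), and Corollary~\ref{Cr-precoloring-f} deduces that any necklace with $d(x,y)\ge k$ is reducible. That $k$ versus $\frac{3k-1}{2}$ gap is not cosmetic: it drives the constants in the discharging. You yourself flag this (``sharpening the thread-extension lemma \dots is a smaller secondary point''), but the sharpened version is not secondary --- without it the numbers in your ``at least five threads per long face'' estimate do not line up with $\frac{22}{3}$. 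Moreover your lemma covers only pure threads, whereas the paper's modified graph $G'$ (formed by deleting from each facial $k$-cycle its longest thread) produces threads that pass through attachment points of former $k$-cycles; the reducibility of those requires the necklace and bull-necklace machinery of Lemmas~\ref{LEM:necklace-t-large} and~\ref{Fr-3-necklace}, which your plan does not contain.

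The second gap is the branching configurations. The paper's Claim~\ref{CL:no-3-thread-f} --- no $(k_1,k_2,k_3)$-thread in $G'$ with $k_1+k_2+k_3\ge \frac{11k-17}{3}$ --- is what makes the charge on a degree-3 vertex nonnegative, and it is the place the precise constant $\frac{22}{3}$ actually appears. You gesture at ``a few further small reducible configurations around degree-$3$ corners'' without stating them, and your discharging does not come with a bound of this kind to discharge against. Relatedly, your graph modification (suppress all degree-2 vertices to form $H$) is genuinely different from the paper's $G'$; the paper's construction is chosen precisely so that every remaining facial $k$-cycle has no degree-2 vertices and hence carries zero charge, whereas in your $G$ (or $H$) a $k$-face may be covered almost entirely by degree-2 vertices and must still be paid for. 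Finally, the discharging itself --- the charge function, the rules, the vertex-by-vertex and face-by-face verification --- is exactly the content of the theorem (the paper uses $ch_0(v)=\frac{k-2}{2}d(v)-k$, $ch_0(f)=d(f)-k$, with rules (R1) faces send $\frac{19}{22}$, (R2) $3^+$-vertices send $\frac{3}{22}$), and you explicitly defer it as ``the main obstacle.'' So the proposal sketches the right scaffolding but omits the two reducibility lemmas that calibrate the constant and the entire discharging verification.
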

In a followup work \cite{HL20}, we also prove the remaining cases ($p=5,7$) of the fractional coloring version of Conjecture \ref{CONJ:f(p)} with some refined arguments and additional efforts.

The rest of this paper is organized as follows. We introduce some preliminaries and prove Proposition \ref{PROP1.1-1.4} in Section \ref{sect2}. The proof of Theorem \ref{THMmain-p} is  presented in Section \ref{sect3} and the proof of Theorem \ref{THM:main-fraction-k} is completed in Section \ref{sect4}. We end this paper with a few remarks  in Section \ref{sect5}.

\section{Preliminaries}\label{sect2}
We start with some basic notation and terminologies.  Let $G =(V(G),E(G))$ be a simple finite graph.
For a vertex $v\in V(G)$, the neighborhood $N_G(v)$ of a vertex $v$ is the set of vertices adjacent to $v$, and denote $d_G(v)=|N_G(v)|$.
The distance between two vertices $u$ and $v$, denoted by $d_G(u,v)$, is the length
of a shortest path from $u$ to $v$ in $G$. The subscript $G$ is often omitted if the graph $G$ is clear from the context.
For $S\subseteq V(G)$, $G - S$ denotes the graph obtained from $G$ by deleting all the vertices of $S$
together with all the edges incident to at least one vertex in $S$.
For a positive integer $i$, let $[i]=\{1,2,\ldots,i\}$.
We use $i^+$ to denote a number equal or greater than $i$. An $i$-vertex ($i^+$-vertex, resp.) is a vertex of degree $i$ (at least $i$, resp.). Similarly, in a plane graph, an $i$-face ($i^+$-face, resp.) is a face of degree $i$ (or at least $i$, resp.). In the rest of this paper, we usually   assume $k\ge 5$ is an odd integer and $p\ge 5$ is a prime implicitly.

A common method in graph coloring is to study certain coloring properties of typical graphs under given precoloring. This usually provides some reducible subgraphs and facilitates a discharging proof. We shall define precoloring properties for circular coloring and fractional coloring, respectively.  Let $H$ be a graph with a vertex subset $S\subset V(H)$. A {\em precoloring} $\omega$ assigns colors in $[k]$ to vertices in $S$ such that $H[S]$ is  properly $C_k$-colored. The graph $H$ is called $(\omega, S)$-colorable if the precoloring $\omega$ of $S$ can be extended to $V(H)$ to obtain a $C_k$-coloring of $H$.
Similarly, a {\em precoloring} $\varphi$ of $S$ assigns colors in ${[k]}\choose{\frac{k-1}{2}}$ to vertices in $S$ such that $H[S]$ is  properly fractional $(k:\frac{k-1}{2})$-colored. We say that  $H$ is {\em $\varphi_S$-colorable} if the precoloring $\varphi$ of $S$ can be extended to all vertices of $H$ to obtain a fractional $(k:\frac{k-1}{2})$-coloring.

%

%

We first observe the following fact on precoloring of $k$-cycle for $C_k$-coloring, which will be  useful.
\begin{lemma}\label{LEM:cycle-Ck-precoloring}
Let $G=v_0v_1\ldots v_{k-1}v_0$ be an odd cycle of length $k$. Let $\omega$ be a precoloring of $\{v_i,v_j\}\subseteq V(G)$. Then $G$ is $(\omega, \{v_i,v_j\})$-colorable if and only if
\begin{equation}\label{EQ:vi-vj}
  \omega(v_i)-\omega(v_j)\equiv \frac{k-1}{2}\cdot (i-j) ~~\text{or}~~\frac{k+1}{2}\cdot (i-j)\pmod{k}.
\end{equation}
\end{lemma}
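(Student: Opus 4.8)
The plan is to exploit the rigidity of $C_k$-colorings of a $k$-cycle: identifying the colour set with $\mathbb{Z}_k$, a $C_k$-coloring is precisely a map $\varphi$ with $\varphi(u)-\varphi(v)\equiv\pm\tfrac{k-1}{2}\pmod k$ on every edge $uv$, since the residues of circular distance exactly $\tfrac{k-1}{2}$ from $0$ are $\tfrac{k-1}{2}$ and $\tfrac{k+1}{2}=-\tfrac{k-1}{2}\bmod k$ (which is why both appear in \eqref{EQ:vi-vj}). Crucially, as $k$ is odd, $\tfrac{k-1}{2}$ is a unit modulo $k$, with inverse $-2$. The content of the lemma is then that any $C_k$-coloring of $G$ must be ``monotone'' around the cycle, and I would prove the two implications separately.

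First I would establish necessity. Given a $C_k$-coloring $\varphi$ of $G=v_0v_1\cdots v_{k-1}v_0$, write $\varphi(v_{t+1})-\varphi(v_t)\equiv\varepsilon_t\tfrac{k-1}{2}\pmod k$ with $\varepsilon_t\in\{+1,-1\}$, indices modulo $k$ (so $v_k=v_0$). Summing these differences around the cycle telescopes to $0$, hence $\tfrac{k-1}{2}\sum_{t=0}^{k-1}\varepsilon_t\equiv 0\pmod k$, and invertibility of $\tfrac{k-1}{2}$ gives $\sum_t\varepsilon_t\equiv 0\pmod k$. Since $\sum_t\varepsilon_t$ is a sum of $k$ terms each $\pm1$, it is an odd integer of absolute value at most $k$, so the only possibility is $\sum_t\varepsilon_t=\pm k$; thus all $\varepsilon_t$ share a common value $\varepsilon\in\{\pm1\}$. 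Then $\varphi(v_t)\equiv\varphi(v_0)+\varepsilon t\tfrac{k-1}{2}\pmod k$ for all $t$, and subtracting the values at $v_i$ and $v_j$ gives $\varphi(v_i)-\varphi(v_j)\equiv\varepsilon(i-j)\tfrac{k-1}{2}\pmod k$, which is the first option of \eqref{EQ:vi-vj} when $\varepsilon=1$ and, using $-\tfrac{k-1}{2}\equiv\tfrac{k+1}{2}$, the second when $\varepsilon=-1$.

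For the converse I would simply exhibit the extension. If $\omega(v_i)-\omega(v_j)\equiv\tfrac{k-1}{2}(i-j)\pmod k$, set $\varphi(v_t):=\omega(v_j)+\tfrac{k-1}{2}(t-j)\bmod k$; if instead $\omega(v_i)-\omega(v_j)\equiv\tfrac{k+1}{2}(i-j)\pmod k$, set $\varphi(v_t):=\omega(v_j)+\tfrac{k+1}{2}(t-j)\bmod k$. In either case $\varphi$ agrees with $\omega$ on $\{v_i,v_j\}$, each edge $v_tv_{t+1}$ with $0\le t\le k-2$ has colour difference $\pm\tfrac{k-1}{2}$, and the closing edge $v_{k-1}v_0$ also does because $-(k-1)\equiv 1\pmod k$; hence $\varphi$ is a $C_k$-coloring of $G$ extending $\omega$, so $G$ is $(\omega,\{v_i,v_j\})$-colorable.

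I do not anticipate a genuine obstacle: the one step deserving a careful sentence is the parity-and-size argument forcing $\sum_t\varepsilon_t=\pm k$, which is exactly where oddness of $k$ enters. I would also remark in passing that the statement tacitly assumes $i\ne j$, and that when $v_iv_j\in E(G)$ condition \eqref{EQ:vi-vj} reduces to the requirement that $\omega$ be a proper precoloring, so the hypotheses of the lemma are consistent.
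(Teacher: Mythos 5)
Your proposal is correct and follows essentially the same route as the paper: the coloring of $G$ is forced to be affine in $t$ (one of the two slopes $\pm\frac{k-1}{2}$), which gives the condition in Eq.~\eqref{EQ:vi-vj}, and conversely that affine formula gives the extension. The paper simply asserts this structure of a $C_k$-coloring of an odd cycle without argument, whereas you supply the telescoping-sum-and-parity step (using that $\frac{k-1}{2}$ is a unit mod $k$ and that a sum of $k$ terms $\pm1$ divisible by $k$ must equal $\pm k$) to justify it; that is a reasonable elaboration of the same idea, not a different proof.
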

\begin{proof} If $\omega$ can be extended to a $C_k$-coloring $\tilde{\omega}$ of $G$, then
  the $C_k$-coloring $\tilde{\omega}: V(G)\mapsto \{0,1,\ldots, k-1\}$ provides a coloring of $G$ such that
$$\text{either}~~~\tilde{\omega}(v_t)\equiv\frac{k-1}{2}\cdot t +\tilde{\omega}(v_0)\pmod{k} ~~\text{for each $0\le t\le k-1$}$$
$$\text{or}~~~\tilde{\omega}(v_t)\equiv\frac{k+1}{2}\cdot t+\tilde{\omega}(v_0)\pmod{k} ~~\text{for each $0\le t\le k-1$}.$$
Hence,  for $v_i,v_j\in V(G)$ we have Eq.~(\ref{EQ:vi-vj}).

Conversely, if Eq.~(\ref{EQ:vi-vj}) holds, then we can properly define a $C_k$-coloring of $G$ as above. This proves the lemma.
\end{proof}

In a graph $G$, a {\em $d$-$C_k$-replacement} operation on a given edge $e=xy\in E(G)$ is to replace the edge $e$ with a $k$-cycle $C_k=v_0v_1\ldots v_{k-1}v_0$ by identifying $x$ with $v_0$ and identifying $y$ with $v_{d}$. When $d$ is not explicitly stated, we just call it a $C_k$-replacement operation on the edge $e\in E(G)$.  Lemma \ref{LEM:cycle-Ck-precoloring} implies the following relation between  $C_k$-coloring and  $d$-$C_k$-replacement operation.

\begin{proposition}\label{PROP:d-k-replacement}
  Let $G$ be a graph, and let $G(d,k)$ be a graph obtained from $G$ by applying $d$-$C_k$-replacement operation on each edge of $G$. Assume that $d$ and $k$ are coprime, i.e., $gcd(d,k)=1$. Then $G$ is $C_k$-colorable if and only if $G(d,k)$ is $C_k$-colorable.
\end{proposition}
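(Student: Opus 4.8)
The plan is to prove both directions of the equivalence by showing how a $C_k$-coloring of one graph transforms into a $C_k$-coloring of the other, using Lemma~\ref{LEM:cycle-Ck-precoloring} as the central tool. Throughout, fix the bijection between each edge $e = xy \in E(G)$ and the $k$-cycle $C_k^e = v_0^e v_1^e \cdots v_{k-1}^e v_0^e$ that replaces it in $G(d,k)$, with $x = v_0^e$ and $y = v_d^e$; note that $V(G) \subseteq V(G(d,k))$ in the natural way, and the internal vertices of distinct replacement cycles are disjoint.

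For the forward direction, suppose $\varphi$ is a $C_k$-coloring of $G$. I would extend $\varphi$ to $G(d,k)$ one replacement cycle at a time. For each edge $e = xy$, since $\varphi$ is a proper $C_k$-coloring we have $\varphi(x) - \varphi(y) \equiv \pm\frac{k-1}{2} \pmod k$ (the two values $\frac{k-1}{2}$ and $\frac{k+1}{2} = -\frac{k-1}{2} \bmod k$ being the allowed differences across an edge of $C_k$). The precoloring $\omega$ of $\{v_0^e, v_d^e\}$ inherited from $\varphi$ then needs to satisfy Eq.~(\ref{EQ:vi-vj}) with $i=0$, $j=d$, i.e.\ $\varphi(x)-\varphi(y) \equiv -\frac{k-1}{2}d$ or $-\frac{k+1}{2}d \pmod k$. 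Here is where $\gcd(d,k)=1$ enters: since $k$ is odd and coprime to $d$, multiplication by $d$ is a bijection of $\Z/k\Z$, and one checks $\frac{k-1}{2}d \equiv \frac{k-1}{2}$ or $\frac{k+1}{2} \pmod k$ according to the parity of $d$ — more precisely, $\{\frac{k-1}{2}d, \frac{k+1}{2}d\} \equiv \{\frac{k-1}{2}, \frac{k+1}{2}\} \pmod k$ because $\frac{k+1}{2}d - \frac{k-1}{2}d = d$ is a unit and $2 \cdot \frac{k-1}{2}d \equiv -d$, forcing the pair to be $\{\frac{k-1}{2}, \frac{k+1}{2}\}$ up to the correct matching. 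Hence Eq.~(\ref{EQ:vi-vj}) is satisfied, so by Lemma~\ref{LEM:cycle-Ck-precoloring} the precoloring extends to a $C_k$-coloring of $C_k^e$; doing this for every $e$ gives a $C_k$-coloring of $G(d,k)$.

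For the reverse direction, suppose $\psi$ is a $C_k$-coloring of $G(d,k)$. Restrict $\psi$ to $V(G)$ and call the restriction $\varphi$. For each edge $e = xy \in E(G)$, the restriction of $\psi$ to the cycle $C_k^e$ is a $C_k$-coloring of $C_k^e$, so by Lemma~\ref{LEM:cycle-Ck-precoloring} applied to the pair $\{v_0^e, v_d^e\}$ we get $\varphi(x) - \varphi(y) = \psi(v_0^e) - \psi(v_d^e) \equiv \pm\frac{k-1}{2}d \pmod k$. By the same coprimality computation as above, $\pm\frac{k-1}{2}d \equiv \pm\frac{k-1}{2} \pmod k$ (as a set of two residues), which is exactly the condition for $xy$ to be properly colored in $C_k$. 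Since this holds for every edge, $\varphi$ is a $C_k$-coloring of $G$.

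The only genuinely non-routine point is the modular arithmetic identity $\{\frac{k-1}{2}d,\ \frac{k+1}{2}d\} \equiv \{\frac{k-1}{2},\ \frac{k+1}{2}\} \pmod{k}$ for $\gcd(d,k)=1$ with $k$ odd; once that is in hand, both directions are immediate from Lemma~\ref{LEM:cycle-Ck-precoloring}. I expect the main obstacle to be stating this cleanly — in particular making sure the ``or'' in Eq.~(\ref{EQ:vi-vj}) is handled correctly (it is a disjunction over the two possible colorings of an odd cycle, and multiplication by $d$ permutes these two options) rather than getting bogged down in casework on the parity of $d$. An alternative, perhaps slicker, presentation avoids the parity casework entirely: observe that $d$-$C_k$-replacement and $1$-$C_k$-replacement on a single edge produce $C_k$-coloring-equivalent gadgets precisely because $v_0^e$ and $v_d^e$ admit the same set of difference-pairs as $v_0^e$ and $v_1^e$ when $\gcd(d,k)=1$, and $G(1,k)$ is trivially $C_k$-colorable iff $G$ is (each replacement $k$-cycle just subdivides an edge into a path of length... — actually one must be slightly careful, so I would keep the direct argument above as the primary proof).
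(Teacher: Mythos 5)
Your central arithmetic claim is false: it is \emph{not} true that $\{\tfrac{k-1}{2}d,\ \tfrac{k+1}{2}d\} \equiv \{\tfrac{k-1}{2},\ \tfrac{k+1}{2}\} \pmod{k}$ for all $d$ coprime to $k$. Since $\tfrac{k+1}{2} \equiv -\tfrac{k-1}{2} \pmod k$, the first set is $\{\pm\tfrac{k-1}{2}\cdot d\}$ and the second is $\{\pm\tfrac{k-1}{2}\}$, and because $\tfrac{k-1}{2}$ is itself a unit mod $k$ these coincide only when $d \equiv \pm 1 \pmod k$. Concretely, with $k=5$, $d=2$: the left set is $\{4,1\}$ while the right set is $\{2,3\}$. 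Your supporting computation is not a valid deduction: knowing that the difference of the pair is $d$ and that twice the smaller element is $-d$ pins the pair down to $\{-d/2,\,d/2\}$, which is not $\{-1/2,\,1/2\}$ unless $d\equiv\pm 1$. Consequently the forward direction of your proof breaks at the first gadget whenever $d\not\equiv\pm 1$: the colors inherited from $\varphi$ on $\{v_0^e,v_d^e\}$ do not satisfy Eq.~(\ref{EQ:vi-vj}), so Lemma~\ref{LEM:cycle-Ck-precoloring} provides no extension; the reverse direction fails symmetrically, and the ``alternative, slicker'' argument at the end rests on exactly the same false premise. This is not a corner case: the paper applies this proposition with $d=\tfrac{p-1}{2}$ (so $d=2$ when $p=5$), which is precisely a failing instance.

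The missing idea is that you cannot keep the colors on $V(G)$ unchanged when passing between $G$ and $G(d,k)$ --- you must \emph{rescale} them. From a $C_k$-coloring $\varphi$ of $G$, set $\omega(u)\equiv d\,\varphi(u)\pmod k$ for each $u\in V(G)\subseteq V(G(d,k))$. Then for every edge $xy$, $\omega(x)-\omega(y)=d(\varphi(x)-\varphi(y))\equiv \tfrac{k-1}{2}d$ or $\tfrac{k+1}{2}d$, which is exactly what Eq.~(\ref{EQ:vi-vj}) (with $i=0$, $j=d$) requires, and Lemma~\ref{LEM:cycle-Ck-precoloring} then colors each replacement $k$-cycle. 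Conversely, from a $C_k$-coloring $\omega$ of $G(d,k)$ put $\varphi\equiv d^{-1}\omega\pmod k$; this is where $\gcd(d,k)=1$ is genuinely used (to invert $d$ in $\mathbb{Z}_k$), and it yields $\varphi(x)-\varphi(y)\in\{\tfrac{k-1}{2},\tfrac{k+1}{2}\}$ for every edge of $G$. Coprimality is there to make $d$ invertible, not to make the two difference-sets agree.
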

\begin{proof}
  Let $\varphi$ be a $C_k$-coloring of $G$.  Define a precoloring $\omega$ of $G(d,k)$ by coloring each vertex  $u\in V(G)\subset V(G(d,k))$ with $\omega(u)\equiv d\varphi(u)\pmod{k}$.  Since $\varphi(u)-\varphi(v)\in\{\frac{k-1}{2}, \frac{k+1}{2}\}$ for each edge $uv\in E(G)$,   we have, in the graph $G(d,k)$,  $$\omega(u)-\omega(v)\equiv d\varphi(u)-d\varphi(v)\equiv \frac{k-1}{2}\cdot d ~~\text{or}~~\frac{k+1}{2}\cdot d\pmod{k}.$$
  It follows from Lemma \ref{LEM:cycle-Ck-precoloring} that $\omega$ can be extended to a $C_k$-coloring of $G(d,k)$ by coloring each $k$-cycle of $G(d,k)$ properly.

  Conversely, assume that $G(d,k)$ admits a $C_k$-coloring $\omega$. Then for each edge $uv\in E(G)$, we have $\omega(u)-\omega(v)\in\{\frac{k-1}{2}\cdot d, \frac{k+1}{2}\cdot d\}\pmod{k}$ by Lemma \ref{LEM:cycle-Ck-precoloring}. Define $\varphi=d^{-1}\omega$ $\pmod{k}$. (Note that $d^{-1}$ exists in $\mathbb{Z}_k$ since $gcd(d,k)=1$.) Then $\varphi(u)-\varphi(v)\in\{\frac{k-1}{2}, \frac{k+1}{2}\}$ for each edge $uv\in E(G)$. That is, $\varphi$ restricted to $V(G)$ provides a proper $C_k$-coloring of $G$.
\end{proof}

Applying Lemma \ref{LEM:cycle-Ck-precoloring}, we  also show that $f(k)$ does not exist for nonprime $k$ by construction using $d$-$C_k$-replacement operations.

\begin{figure}

\minipage{0.45\textwidth}
\centering
\begin{tikzpicture}[scale=0.4]
\tikzstyle{mynodestyle} = [draw,shape=circle,outer sep=0,inner sep=1.2,minimum size=4,fill=black]

\tikzstyle{myedgestyle} = [line width=0.6pt, black]

\draw [myedgestyle] (3.6501,5.5583) circle (1.5);
\draw[myedgestyle]  (1,7) circle (1.5);
\draw [myedgestyle] (-1.7727,-6.4424) circle (1.5);
\draw[myedgestyle]  (1.2552,-6.4929) circle (1.5);
\draw [myedgestyle] (4.0579,-5.2974) circle (1.5);
\draw[myedgestyle]  (-4.3674,5.5545) circle (1.5);
\draw[myedgestyle]  (-4.4004,-4.8692) circle (1.5);

\draw[myedgestyle]  (5.3149,3.0092) circle (1.5);
\draw [myedgestyle] (6.116,0.098) circle (1.5);
\draw[myedgestyle]  (5.8871,-2.9208) circle (1.5);
\draw[myedgestyle]  (-6.1445,-2.4626) circle (1.5);
\draw [myedgestyle] (-7.0688,0.3941) circle (1.5);
\draw[myedgestyle]  (-6.3912,3.2962) circle (1.5);

\node [mynodestyle] at (2.2916,6.2919) {};
\node [mynodestyle] (v2) at (-0.5058,6.9054) {};
\node [mynodestyle] (v1) at (-2.9497,6.1151) {};
\node [mynodestyle] at (4.4755,4.2952) {};
\node [mynodestyle] at (2.2708,5.0335) {};
\node [mynodestyle] at (3.1548,4.1704) {};
\node [mynodestyle] at (3.1444,6.9782) {};
\node [mynodestyle] at (4.1843,6.9678) {};
\node [mynodestyle] at (4.7875,6.5415) {};
\node [mynodestyle] at (5.0786,5.9175) {};
\node [mynodestyle] at (5.1098,5.1687) {};
\node [mynodestyle] at (-0.0378,5.9383) {};
\node [mynodestyle] at (1.2572,5.5163) {};
\node [mynodestyle] at (1.4597,8.4237) {};
\node [mynodestyle] at (0.5341,8.4133) {};
\node [mynodestyle] at (-0.2042,7.9038) {};
\node [mynodestyle] at (2.0732,8.0286) {};
\node [mynodestyle] at (2.4268,7.3942) {};
\node [mynodestyle] at (5.7074,1.5684) {};
\node [mynodestyle] at (3.8203,3.2448) {};
\node [mynodestyle] at (4.2155,1.9969) {};
\node [mynodestyle] at (6.7284,3.4802) {};
\node [mynodestyle] at (5.4218,4.5344) {};
\node [mynodestyle] at (6.1498,4.2848) {};
\node [mynodestyle] at (6.788,2.7303) {};
\node [mynodestyle] at (6.4098,2.0281) {};
\node [mynodestyle] at (5.9783,-1.399) {};
\node [mynodestyle] at (4.8707,-0.7069) {};
\node [mynodestyle] at (4.7522,0.7225) {};
\node [mynodestyle] at (6.7114,1.5082) {};
\node [mynodestyle] at (7.2729,1.0402) {};
\node [mynodestyle] at (7.5953,0.2186) {};
\node [mynodestyle] at (7.4291,-0.5984) {};
\node [mynodestyle] at (6.8379,-1.186) {};
\node [mynodestyle] at (4.9807,-4.1193) {};
\node [mynodestyle] at (4.3715,-2.922) {};
\node [mynodestyle] at (4.9655,-1.7372) {};
\node [mynodestyle] at (7.3153,-3.2994) {};
\node [mynodestyle] at (7.2417,-2.298) {};
\node [mynodestyle] at (6.7634,-1.6845) {};
\node [mynodestyle] at (6.8257,-4.0555) {};
\node [mynodestyle] at (5.9319,-4.41) {};
\node [mynodestyle] at (2.6452,-5.8442) {};
\node [mynodestyle] at (2.666,-4.7211) {};
\node [mynodestyle] at (3.4693,-3.9128) {};
\node [mynodestyle] at (5.131,-6.3523) {};
\node [mynodestyle] at (5.4634,-4.8459) {};
\node [mynodestyle] at (5.5111,-5.7011) {};
\node [mynodestyle] at (4.2623,-6.7812) {};
\node [mynodestyle] at (3.2692,-6.5618) {};
\node [mynodestyle] at (-0.2666,-6.489) {};
\node [mynodestyle] at (0.2593,-5.3665) {};
\node [mynodestyle] at (1.5624,-5.0207) {};
\node [mynodestyle] at (1.6167,-7.9221) {};
\node [mynodestyle] at (2.3812,-7.5141) {};
\node [mynodestyle] at (2.7461,-6.7385) {};
\node [mynodestyle] at (0.6181,-7.8508) {};
\node [mynodestyle] at (-0.0391,-7.2813) {};
\node [mynodestyle] at (-3.0849,-5.6258) {};
\node [mynodestyle] at (-2.0556,-4.9662) {};
\node [mynodestyle] at (-0.8166,-5.2708) {};
\node [mynodestyle] at (-2.076,-7.9057) {};
\node [mynodestyle] at (-2.9304,-7.357) {};
\node [mynodestyle] at (-3.2397,-6.4179) {};
\node [mynodestyle] at (-1.1703,-7.7866) {};
\node [mynodestyle] at (-0.5266,-7.3105) {};
\node [mynodestyle] at (-5.2999,-3.6603) {};
\node [mynodestyle] at (-5.4338,-5.9334) {};
\node [mynodestyle] at (-5.8511,-4.4299) {};
\node [mynodestyle] at (-5.8303,-5.1787) {};
\node [mynodestyle] at (-4.7488,-6.3226) {};
\node [mynodestyle] at (-3.7991,-6.2533) {};
\node [mynodestyle] at (-3.8648,-3.4212) {};
\node [mynodestyle] at (-3.0485,-4.2341) {};
\node [mynodestyle] at (-6.5999,-0.9877) {};
\node [mynodestyle] at (-5.3727,-1.1853) {};
\node [mynodestyle] at (-4.6448,-2.1524) {};
\node [mynodestyle] at (-7.4734,-1.7781) {};
\node [mynodestyle] at (-7.5982,-2.7348) {};
\node [mynodestyle] at (-7.2652,-3.434) {};
\node [mynodestyle] at (-6.7039,-3.8163) {};
\node [mynodestyle] at (-6.0044,-3.9611) {};
\node [mynodestyle] at (-6.7247,1.8513) {};
\node [mynodestyle] at (-5.7575,1.165) {};
\node [mynodestyle] at (-5.7055,-0.3013) {};
\node [mynodestyle] at (-8.5842,0.4261) {};
\node [mynodestyle] at (-8.2543,1.3522) {};
\node [mynodestyle] at (-7.5991,1.8201) {};
\node [mynodestyle] at (-8.3766,-0.3717) {};
\node [mynodestyle] at (-7.6199,-0.9773) {};
\node [mynodestyle] at (-5.3528,4.4304) {};
\node [mynodestyle] at (-4.8848,3.276) {};
\node [mynodestyle] at (-5.4568,2.1009) {};
\node [mynodestyle] at (-7.6927,4.0248) {};
\node [mynodestyle] at (-7.8926,3.133) {};
\node [mynodestyle] at (-7.5263,2.3401) {};
\node [mynodestyle] at (-7.0375,4.6592) {};
\node [mynodestyle] at (-6.112,4.7528) {};
\node [mynodestyle] at (-3.9905,4.1184) {};
\node [mynodestyle] at (-2.8882,5.1999) {};
\node [mynodestyle] at (-5.28,6.6974) {};
\node [mynodestyle] at (-5.8104,5.9383) {};
\node [mynodestyle] at (-5.7896,5.1271) {};
\node [mynodestyle] at (-4.4169,7.051) {};
\node [mynodestyle] at (-3.4913,6.791) {};
\draw[myedgestyle]  (v1) edge (v2);

\node at (0.0635,7.103) {\small$z_0$};
\node at (2.947,6.2119) {\small$z_1$};
\node at (4.7994,3.7243) {\small$z_2$};
\node at (5.8265,1.0706) {\small$z_3$};
\node at (6.0449,-1.8853) {\small$z_4$};
\node at (4.5722,-4.4467) {\small$z_5$};
\node at (2.1555,-6.1146) {\small$z_6$};
\node at (-0.8251,-6.5826) {\small$z_7$};
\node at (-3.6865,-5.5218) {\small$z_8$};
\node at (-5.8284,-3.4108) {\small$z_9$};
\node at (-6.9335,-0.4757) {\small$z_{10}$};
\node at (-6.6047,2.3777) {\small$z_{11}$};
\node at (-4.9428,4.8008) {\small$z_{12}$};
\node at (-2.6214,6.6979) {\small$z_{13}$};
\node at (-0.8251,-10.5) {\small (a) Construction of $G$ for $k=9$ and $m=13$.};
\end{tikzpicture}
\endminipage\hfill
\minipage{0.55\textwidth}
\centering
\begin{tikzpicture}[scale=0.7]
\tikzstyle{mynodestyle} = [draw,shape=circle,outer sep=0,inner sep=1.2,minimum size=4.5,fill=black]
\tikzstyle{mynodestyle2} = [draw,shape=circle,outer sep=0,inner sep=1.2,minimum size=2,fill=black!50!]

\tikzstyle{myedgestyle} = [line width=0.6pt, black]
\node[mynodestyle] (v1) at (0,0) {};
\node[mynodestyle] (v2) at (0,4) {};
\node[mynodestyle] (v8) at (-1.9129,-3.489) {};
\node[mynodestyle] (v10) at (1.921,-3.489) {};
\node[mynodestyle] (v4) at (-3.1295,2.5277) {};
\node [mynodestyle](v14) at (3.1309,2.4944) {};
\node [mynodestyle](v6) at (-3.8941,-0.8961) {};
\node [mynodestyle](v12) at (3.8691,-0.9073) {};
\node [mynodestyle2](v3) at (-1.3027,2.9377) {};

\node [mynodestyle2](v15) at (1.3788,2.8712) {};
\node[mynodestyle2] (v13) at (3.063,0.7992) {};
\node[mynodestyle2] (v11) at (2.5644,-1.7715) {};
\node [mynodestyle2](v9) at (-0.0063,-3.1344) {};
\node[mynodestyle2] (v7) at (-2.5659,-1.7383) {};
\node[mynodestyle2] (v5) at (-3.0423,0.8657) {};

\node[mynodestyle2] (v17) at (-2.5839,4.2349) {};
\node[mynodestyle2] (v16) at (-1.1989,4.8443) {};
\node[mynodestyle2] (v29) at (1.2278,4.8) {};
\node [mynodestyle2](v28) at (2.7539,4.102) {};
\node [mynodestyle2](v27) at (4.4381,2.1628) {};
\node[mynodestyle2] (v26) at (4.973,0.2681) {};
\node[mynodestyle2] (v25) at (4.4492,-2.2139) {};
\node[mynodestyle2] (v24) at (3.3663,-3.6322) {};
\node[mynodestyle2] (v23) at (1.1059,-4.8067) {};
\node [mynodestyle2](v22) at (-0.9108,-4.8511) {};
\node[mynodestyle2] (v21) at (-3.3374,-3.6544) {};
\node[mynodestyle2] (v20) at (-4.4262,-2.2361) {};
\node[mynodestyle2] (v19) at (-4.9551,0.2681) {};
\node[mynodestyle2] (v18) at (-4.5356,2.041) {};

\draw[myedgestyle]   (v2) edge (v3);
\draw[myedgestyle]  (v3) edge (v4);
\draw [myedgestyle] (v4) edge (v5);
\draw[myedgestyle]  (v5) edge (v6);
\draw [myedgestyle] (v6) edge (v7);
\draw  [myedgestyle](v7) edge (v8);
\draw [myedgestyle] (v8) edge (v9);
\draw[myedgestyle]  (v9) edge (v10);
\draw [myedgestyle] (v10) edge (v11);
\draw [myedgestyle] (v11) edge (v12);
\draw[myedgestyle]  (v12) edge (v13);
\draw[myedgestyle]  (v14) edge (v13);
\draw[myedgestyle]  (v14) edge (v15);
\draw [myedgestyle] (v15) edge (v2);
\draw [myedgestyle] (v2) edge (v16);
\draw [myedgestyle] (v16) edge (v17);
\draw [myedgestyle] (v17) edge (v4);
\draw [myedgestyle] (v4) edge (v18);
\draw[myedgestyle]  (v18) edge (v19);
\draw[myedgestyle]  (v19) edge (v6);
\draw[myedgestyle]  (v6) edge (v20);
\draw [myedgestyle] (v20) edge (v21);
\draw [myedgestyle] (v21) edge (v8);
\draw [myedgestyle] (v8) edge (v22);
\draw [myedgestyle] (v22) edge (v23);
\draw [myedgestyle] (v23) edge (v10);
\draw[myedgestyle]  (v10) edge (v24);
\draw[myedgestyle]  (v24) edge (v25);
\draw [myedgestyle] (v25) edge (v12);
\draw [myedgestyle] (v12) edge (v26);
\draw[myedgestyle]  (v26) edge (v27);
\draw[myedgestyle]  (v27) edge (v14);
\draw [myedgestyle] (v14) edge (v28);
\draw[myedgestyle]  (v28) edge (v29);
\draw[myedgestyle]  (v2) edge (v29);


\node [mynodestyle2](v35) at (-0.0197,1.3382) {};
\node[mynodestyle2] (v42) at (1.0219,0.8396) {};
\node[mynodestyle2] (v53) at (1.2989,-0.2241) {};
\node[mynodestyle2] (v64) at (0.7227,-1.1106) {};
\node[mynodestyle2] (v75) at (-0.7288,-1.1106) {};
\node[mynodestyle2] (v86) at (-1.2828,-0.202) {};
\node [mynodestyle2] (v97) at (-0.9504,0.9504) {};


\node[mynodestyle2] (v31) at (-0.0086,2.6014) {};
\node[mynodestyle2] (v46) at (1.9859,1.6928) {};
\node [mynodestyle2] (v57) at (2.5289,-0.5011) {};
\node[mynodestyle2] (v68) at (1.3322,-2.1964) {};
\node [mynodestyle2] (v79) at (-1.2828,-2.2186) {};
\node[mynodestyle2] (v90) at (-2.5238,-0.5011) {};
\node[mynodestyle2] (v101) at (-1.9477,1.715) {};


\node[mynodestyle2] (v30) at (-0.1305,3.3106) {};
\node[mynodestyle2] (v33) at (0.3238,3.4989) {};
\node[mynodestyle2] (v32) at (0.3571,3.0557) {};
\node[mynodestyle2] (v34) at (-0.1415,1.9588) {};
\node[mynodestyle2] (v37) at (0.3903,2.2579) {};
\node[mynodestyle2] (v36) at (0.3792,1.7704) {};
\node[mynodestyle2] (v38) at (-0.1526,0.7066) {};
\node [mynodestyle2](v40) at (0.2906,0.9726) {};
\node[mynodestyle2] (v39) at (0.2906,0.5515) {};
\draw [myedgestyle] (v2) edge (v30);
\draw [myedgestyle] (v30) edge (v31);
\draw [myedgestyle] (v31) edge (v32);
\draw[myedgestyle]  (v32) edge (v33);
\draw[myedgestyle]  (v33) edge (v2);
\draw [myedgestyle] (v31) edge (v34);
\draw [myedgestyle] (v34) edge (v35);
\draw[myedgestyle]  (v35) edge (v36);
\draw[myedgestyle]  (v36) edge (v37);
\draw[myedgestyle]  (v37) edge (v31);
\draw[myedgestyle]  (v35) edge (v38);
\draw [myedgestyle] (v38) edge (v1);
\draw[myedgestyle]  (v1) edge (v39);
\draw[myedgestyle]  (v39) edge (v40);
\draw[myedgestyle]  (v40) edge (v35);
\node[mynodestyle2] (v49) at (2.4956,2.1471) {};
\node[mynodestyle2] (v51) at (2.4956,1.7039) {};
\node [mynodestyle2](v50) at (2.9277,2.0363) {};
\node [mynodestyle2](v45) at (1.4429,1.3714) {};
\node [mynodestyle2](v48) at (1.5205,0.7842) {};
\node[mynodestyle2] (v47) at (1.8419,1.0834) {};
\node[mynodestyle2] (v44) at (0.5454,0.1415) {};
\node[mynodestyle2] (v43) at (0.8779,0.3853) {};
\node[mynodestyle2] (v41) at (0.5455,0.474) {};
\draw [myedgestyle] (v1) edge (v41);
\draw[myedgestyle]  (v41) edge (v42);
\draw[myedgestyle]  (v42) edge (v43);
\draw [myedgestyle] (v43) edge (v44);
\draw [myedgestyle] (v44) edge (v1);
\draw[myedgestyle]  (v42) edge (v45);
\draw [myedgestyle] (v45) edge (v46);
\draw [myedgestyle] (v46) edge (v47);
\draw [myedgestyle] (v47) edge (v48);
\draw[myedgestyle]  (v48) edge (v42);
\draw [myedgestyle] (v46) edge (v49);
\draw [myedgestyle] (v49) edge (v14);
\draw[myedgestyle]  (v14) edge (v50);
\draw [myedgestyle] (v50) edge (v51);
\draw[myedgestyle]  (v51) edge (v46);

\node[mynodestyle2] (v52) at (0.7227,-0.0025) {};
\node[mynodestyle2] (v56) at (2.0192,-0.1355) {};
\node[mynodestyle2] (v60) at (3.2602,-0.4679) {};
\node[mynodestyle2] (v55) at (0.4346,-0.2352) {};
\node[mynodestyle2] (v54) at (0.8114,-0.346) {};
\node[mynodestyle2] (v59) at (1.6757,-0.4679) {};
\node [mynodestyle2](v58) at (2.0967,-0.5565) {};
\node[mynodestyle2] (v62) at (2.8834,-0.7117) {};
\node[mynodestyle2] (v61) at (3.4153,-0.8668) {};
\draw[myedgestyle]  (v1) edge (v52);
\draw [myedgestyle] (v52) edge (v53);
\draw[myedgestyle]  (v53) edge (v54);
\draw [myedgestyle] (v54) edge (v55);
\draw [myedgestyle] (v55) edge (v1);
\draw[myedgestyle]  (v53) edge (v56);
\draw [myedgestyle] (v56) edge (v57);
\draw[myedgestyle]  (v57) edge (v58);
\draw [myedgestyle] (v58) edge (v59);
\draw[myedgestyle]  (v59) edge (v53);
\draw[myedgestyle]  (v57) edge (v60);
\draw [myedgestyle] (v60) edge (v12);
\draw [myedgestyle] (v12) edge (v61);
\draw [myedgestyle] (v61) edge (v62);
\draw[myedgestyle]  (v62) edge (v57);

\node [mynodestyle2](v63) at (0.4679,-0.5565) {};
\node [mynodestyle2](v67) at (1.166,-1.5427) {};
\node [mynodestyle2](v71) at (1.7532,-2.684) {};
\node[mynodestyle2] (v66) at (0.0801,-0.5676) {};
\node[mynodestyle2] (v65) at (0.3349,-0.8889) {};
\node[mynodestyle2] (v70) at (0.6341,-1.4873) {};
\node [mynodestyle2](v69) at (0.8668,-1.8529) {};
\node[mynodestyle2] (v73) at (1.2657,-2.5732) {};
\node[mynodestyle2] (v72) at (1.5649,-3.0386) {};
\draw[myedgestyle]  (v1) edge (v63);
\draw [myedgestyle] (v63) edge (v64);
\draw [myedgestyle] (v64) edge (v65);
\draw[myedgestyle]  (v65) edge (v66);
\draw [myedgestyle] (v66) edge (v1);
\draw [myedgestyle] (v64) edge (v67);
\draw [myedgestyle] (v67) edge (v68);
\draw [myedgestyle] (v68) edge (v69);
\draw [myedgestyle] (v69) edge (v70);
\draw[myedgestyle]  (v70) edge (v64);
\draw[myedgestyle]  (v68) edge (v71);
\draw [myedgestyle] (v71) edge (v10);
\draw [myedgestyle] (v10) edge (v72);
\draw [myedgestyle] (v72) edge (v73);
\draw[myedgestyle]  (v73) edge (v68);
\node [mynodestyle2](v74) at (-0.208,-0.7006) {};
\node[mynodestyle2] (v77) at (-0.4629,-0.3238) {};
\node[mynodestyle2] (v76) at (-0.7066,-0.6341) {};
\node[mynodestyle2] (v78) at (-0.7731,-1.8419) {};
\node[mynodestyle2] (v81) at (-1.1942,-1.3765) {};
\node[mynodestyle2] (v80) at (-1.3715,-1.7311) {};
\node[mynodestyle2] (v82) at (-1.3936,-2.8723) {};
\node[mynodestyle2] (v84) at (-1.6928,-2.4624) {};
\node [mynodestyle2](v83) at (-1.859,-2.8502) {};
\draw[myedgestyle]  (v1) edge (v74);
\draw[myedgestyle]  (v74) edge (v75);
\draw[myedgestyle]  (v75) edge (v76);
\draw[myedgestyle]  (v76) edge (v77);
\draw [myedgestyle] (v77) edge (v1);
\draw[myedgestyle]  (v75) edge (v78);
\draw [myedgestyle] (v78) edge (v79);
\draw [myedgestyle] (v79) edge (v80);
\draw [myedgestyle] (v80) edge (v81);
\draw [myedgestyle] (v81) edge (v75);
\draw [myedgestyle] (v79) edge (v82);
\draw [myedgestyle] (v82) edge (v8);
\draw [myedgestyle] (v8) edge (v83);
\draw [myedgestyle] (v83) edge (v84);
\draw[myedgestyle]  (v84) edge (v79);
\node[mynodestyle2] (v85) at (-0.6512,-0.1909) {};
\node[mynodestyle2] (v88) at (-0.4629,0.1969) {};
\node[mynodestyle2] (v87) at (-0.9615,0.1748) {};
\node [mynodestyle2](v89) at (-1.8147,-0.5122) {};
\node [mynodestyle2](v92) at (-1.726,0.0307) {};
\node[mynodestyle2] (v91) at (-2.2247,-0.0579) {};
\node [mynodestyle2](v93) at (-3.0557,-0.8114) {};
\node [mynodestyle2](v95) at (-2.9892,-0.3238) {};
\node [mynodestyle2](v94) at (-3.3992,-0.4346) {};
\draw[myedgestyle]  (v1) edge (v85);
\draw[myedgestyle]  (v85) edge (v86);
\draw[myedgestyle]  (v86) edge (v87);
\draw [myedgestyle] (v87) edge (v88);
\draw [myedgestyle] (v88) edge (v1);
\draw[myedgestyle]  (v86) edge (v89);
\draw [myedgestyle] (v89) edge (v90);
\draw [myedgestyle] (v90) edge (v91);
\draw[myedgestyle]  (v91) edge (v92);
\draw [myedgestyle] (v92) edge (v86);
\draw[myedgestyle]  (v90) edge (v93);
\draw[myedgestyle]  (v93) edge (v6);
\draw  [myedgestyle](v6) edge (v94);
\draw [myedgestyle] (v94) edge (v95);
\draw [myedgestyle] (v95) edge (v90);
\node[mynodestyle2] (v96) at (-0.5515,0.4186) {};
\node[mynodestyle2] (v99) at (-0.3188,0.5404) {};
\node[mynodestyle2] (v98) at (-0.5294,0.7953) {};
\draw [myedgestyle] (v1) edge (v96);
\draw [myedgestyle] (v96) edge (v97);
\draw[myedgestyle]  (v97) edge (v98);
\draw [myedgestyle] (v1) edge (v99);
\draw [myedgestyle] (v99) edge (v98);
\node[mynodestyle2] (v100) at (-1.5931,1.2053) {};
\node[mynodestyle2] (v103) at (-1.0945,1.3493) {};
\node[mynodestyle2] (v102) at (-1.4379,1.6817) {};
\node[mynodestyle2] (v104) at (-2.6236,1.9366) {};
\node[mynodestyle2] (v106) at (-2.1582,2.1249) {};
\node[mynodestyle2] (v105) at (-2.546,2.4352) {};
\draw [myedgestyle] (v97) edge (v100);
\draw[myedgestyle]  (v100) edge (v101);
\draw[myedgestyle]  (v101) edge (v102);
\draw [myedgestyle] (v102) edge (v103);
\draw [myedgestyle] (v103) edge (v97);
\draw [myedgestyle] (v101) edge (v104);
\draw [myedgestyle] (v104) edge (v4);
\draw[myedgestyle]  (v4) edge (v105);
\draw [myedgestyle] (v105) edge (v106);
\draw [myedgestyle] (v106) edge (v101);
\node at (-0.2,-6) {\small (b) Construction of $H_p$ for $p=5$.};
\end{tikzpicture}
\endminipage\hfill
\caption{Constructions in Propositions \ref{PROP:nonprimek} and \ref{PROP:p-prime-quadratic-bound}.}\label{Fig:constra}
\end{figure}

\begin{proposition}\label{PROP:nonprimek}
  Let $k>0$ be  an odd nonprime integer. Then $f(k)$ does not exist. That is, for any integer $m>k$ there exist planar graphs of girth $k$ without cycles of length from $k+1$ to $m$ admitting no $C_k$-coloring.
\end{proposition}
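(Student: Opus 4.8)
The plan is to exhibit, for each prescribed $m>k$, a planar \emph{necklace} of $k$‑cycles whose failure to be $C_k$‑colorable comes from a proper divisor of $k$.

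\smallskip
\noindent\emph{Construction.} Since $k$ is odd and composite it has a divisor $g$ with $3\le g<k/2$: take $g$ to be the least prime factor of $k$, so $3\le g\le\sqrt k<k/2$ (here $k\ge 9$, and $2\nmid k$). Fix $n$ with $(n-1)g+1>m$. Build $G$ from a cycle on $z_0,z_1,\dots,z_{n-1}$ by keeping the edge $z_{n-1}z_0$ and applying a $g$-$C_k$-replacement to each of the other edges $z_iz_{i+1}$ $(0\le i\le n-2)$: replace $z_iz_{i+1}$ by a new $k$-cycle $Q_i=v_0v_1\cdots v_{k-1}v_0$ with $z_i=v_0$ and $z_{i+1}=v_g$, so $Q_i$ is the union of a $z_i$–$z_{i+1}$ path of length $g$ and one of length $k-g$. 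Placing $z_0,\dots,z_{n-1}$ in convex position, drawing the two arcs of each $Q_i$ in the region between $z_i$ and $z_{i+1}$, and using $z_{n-1}z_0$ as the remaining side exhibits $G$ as a simple planar graph.

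\smallskip
\noindent\emph{Cycle lengths.} Next I would check $G$ has girth $k$ and no cycle of length in $\{k+1,\dots,m\}$. Since the only vertices of $Q_i$ lying in the rest of $G$ are $z_i$ and $z_{i+1}$, any cycle $C$ of $G$ meets $Q_i$ either in all of $Q_i$ (forcing $C=Q_i$, of length $k$), or in a single $z_i$–$z_{i+1}$ arc (length $g$ or $k-g$), or not at all. In the last case, contracting each used arc to an edge identifies $C$ with a cycle of the original $n$-cycle on $z_0,\dots,z_{n-1}$; there is only one such cycle, so $C$ uses one arc of \emph{every} $Q_i$ together with the edge $z_{n-1}z_0$, hence has length at least $(n-1)\min\{g,k-g\}+1=(n-1)g+1>m>k$. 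Thus the shortest cycles of $G$ are exactly the $Q_i$, of length $k$, and $G$ has no cycle whose length lies strictly between $k$ and $m+1$.

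\smallskip
\noindent\emph{Non‑colorability.} Suppose $\omega$ is a $C_k$-coloring of $G$. Restricting $\omega$ to $Q_i$ and applying Lemma~\ref{LEM:cycle-Ck-precoloring} to the marked vertices $v_0=z_i$, $v_g=z_{i+1}$ gives
\[
\omega(z_i)-\omega(z_{i+1})\equiv -\tfrac{k-1}{2}\,g\quad\text{or}\quad -\tfrac{k+1}{2}\,g\pmod k .
\]
As $k$ is odd, both $\tfrac{k-1}{2}g$ and $\tfrac{k+1}{2}g$ are integer multiples of $g$, and $g\mid k$, so $\omega(z_i)\equiv\omega(z_{i+1})\pmod g$ for every $i\le n-2$; chaining these congruences, $\omega(z_{n-1})\equiv\omega(z_0)\pmod g$. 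On the other hand $z_{n-1}z_0\in E(G)$, so $\omega(z_{n-1})-\omega(z_0)\equiv\pm\tfrac{k-1}{2}\pmod k$, and since $\gcd\!\big(\tfrac{k-1}{2},k\big)=1$ and $g\mid k$ with $g\ge 3$, this yields $\omega(z_{n-1})\not\equiv\omega(z_0)\pmod g$ — a contradiction. Hence $G$ has no $C_k$-coloring, and as $m$ was arbitrary, no finite $f(k)$ works.

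\smallskip
\noindent The only delicate point is the cycle count: one must be certain that wiring the $Q_i$'s together produces no ``medium'' cycle of length in $\{k+1,\dots,m\}$, which is exactly why all but one edge of the $n$-cycle is replaced — the single kept edge both makes the planar drawing transparent and supplies the one $\bmod\ g$ discrepancy that destroys every coloring. Everything else is bookkeeping with Lemma~\ref{LEM:cycle-Ck-precoloring}.
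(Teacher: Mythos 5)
Your proof is correct and follows essentially the same construction and argument as the paper: build a long cycle, replace all but one edge by a $d$-$C_k$-replacement with $d$ a nontrivial divisor of $k$, keep the remaining edge, and derive a contradiction from the fact that the color increment around the cycle is constrained modulo that divisor while the single kept edge forces an increment of $\pm\frac{k-1}{2}$, which is coprime to $k$. The only cosmetic difference is that you specialize $d$ to the least prime factor $g$ of $k$ and parameterize the cycle length by $n$ with $(n-1)g+1>m$, whereas the paper fixes an $(m+1)$-cycle and lets $d=s$ be an arbitrary divisor with $1<s\le\sqrt{k}$; both choices verify the cycle-length gap and yield the same modular obstruction.
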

\begin{proof}
   Denote $k=st$, where $s,t$ are positive integers with $t\ge s>1$.   Take an $(m+1)$-cycle $z_0z_1z_2\ldots z_{m}z_0$. For each $0\le i\le m-1$, apply $s$-$C_k$-replacement operation on the edge $z_iz_{i+1}$. Let $G$ be the resulting graph. Then  $G$ is a planar graph of girth $k$  without cycles of length from $k+1$ to $ms$. See Figure \ref{Fig:constra}(a) for the construction of $G$ when $k=9$ and $m=13$.

   It is routine  to check that $G$ is not $C_k$-colorable.  To see this,  suppose for a contradiction that $\omega: V(G) \mapsto \{0,1,\ldots,{k-1}\}$ is a $C_k$-coloring of $G$. By Lemma \ref{LEM:cycle-Ck-precoloring}, for each $0\le i\le m-1$, we have
   $$\omega(z_{i+1})-\omega(z_i)\equiv \frac{k-1}{2}\cdot s ~~\text{or}~~\frac{k+1}{2}\cdot s\pmod{k}.$$
    Thus $\omega(z_{i+1})-\omega(z_i)$ is a multiple of $s$ since $k=st$. This implies that $$\omega(z_{m})-\omega(z_0)=\sum_{i=0}^{m-1}(\omega(z_{i+1})-\omega(z_i))~\text{is a multiple of $s$.}$$
    On the other hand, as $z_mz_0$ is an edge in $E(G)$, we must have $|\omega(z_{m})-\omega(z_0)|\in\{\frac{k-1}{2},\frac{k+1}{2}\}$. But as $k=st$, neither $\frac{k-1}{2}$ nor $\frac{k+1}{2}$ is a multiple of $s$, a contradiction. This completes the proof.
\end{proof}

In contrast, we will show below in Theorem \ref{THM-main1-p} that $f(p)$ exists as a quadratic function of $p$ for odd prime $p$.   Now we give a low bound of $f(p)$ with similar arguments as Proposition \ref{PROP:nonprimek}.
\begin{proposition}\label{PROP:p-prime-quadratic-bound}
   For any prime  $p\ge 5$, there exist planar graphs of girth $p$ without cycles of length from $p+1$ to $p^2-\frac{5p-1}{2}$ admitting no $C_p$-coloring. That is, $f(p)\ge p^2-\frac{5}{2}p+\frac{3}{2}.$
\end{proposition}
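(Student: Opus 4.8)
The plan is to construct a single planar graph $H_p$ of girth $2p-3$ that admits no $C_p$-coloring, and then to inflate it by performing a $\frac{p-1}{2}$-$C_p$-replacement on every edge. The substantive step is the construction of $H_p$. I would start from a small configuration with no $C_p$-coloring — equivalently, one admitting no $\mathbb{Z}_p$-tension whose entries all lie in $\{\pm\frac{p-1}{2}\}$ — and expand each edge into a carefully chosen path or small gadget so that every cycle acquires length at least $2p-3$ while the obstruction to a $C_p$-coloring survives. One then checks, using Lemma \ref{LEM:cycle-Ck-precoloring} together with a parity/summation argument in the spirit of Proposition \ref{PROP:nonprimek}, that the resulting planar graph $H_p$ has girth exactly $2p-3$ and no $C_p$-coloring; this is the graph depicted in Figure \ref{Fig:constra}(b) for $p=5$.

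With $H_p$ in hand, set $G := H_p(\frac{p-1}{2},p)$, the graph obtained by applying a $\frac{p-1}{2}$-$C_p$-replacement to each edge of $H_p$. Since $p$ is prime and $0<\frac{p-1}{2}<p$, we have $\gcd(\frac{p-1}{2},p)=1$, so Proposition \ref{PROP:d-k-replacement} applies: $G$ is $C_p$-colorable if and only if $H_p$ is, hence $G$ is not $C_p$-colorable. Plainly $G$ is planar. For its cycle structure, note that the replacement turns each edge into a $p$-cycle split by its two terminals into arcs of lengths $\frac{p-1}{2}$ and $\frac{p+1}{2}$, and that every cycle of $G$ is either one of these gadget $p$-cycles or projects onto a cycle of $H_p$, obtained by substituting for each of its edges one of the two arcs. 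A cycle arising from a cycle of $H_p$ of length $\ell\ge 2p-3$ thus has length at least $\ell\cdot\frac{p-1}{2}\ge (2p-3)\cdot\frac{p-1}{2}=p^2-\frac{5p-3}{2}$. Therefore $G$ has girth $p$ and no cycle of length between $p+1$ and $p^2-\frac{5p-3}{2}-1=p^2-\frac{5p-1}{2}$.

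Consequently $G$ is a planar graph of girth $p$, without cycles of length from $p+1$ to $p^2-\frac{5p-1}{2}$, that admits no $C_p$-coloring; this yields $f(p)\ge p^2-\frac{5p-1}{2}+1=p^2-\frac{5}{2}p+\frac{3}{2}$, as claimed.

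The hard part is the first step: exhibiting a planar graph of girth $2p-3$ with no homomorphism to $C_p$. The girth $2p-3$ is one below the girth $2p-2$ for which Conjecture \ref{CONJ:circular-conj} (with $t=p-1$) predicts a $C_p$-coloring always exists, so such a graph must be assembled delicately — it cannot be a single cycle or a theta graph, since any such graph of girth at least $2p-3$ does admit a $C_p$-coloring, so several overlapping cycles of length $2p-3$ must be fused together; this is the source of the intricate structure in Figure \ref{Fig:constra}(b). Once $H_p$ is constructed, the rest of the argument is routine given Proposition \ref{PROP:d-k-replacement} and Lemma \ref{LEM:cycle-Ck-precoloring}.
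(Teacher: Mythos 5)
The second half of your argument is correct and matches the paper: once you have a planar graph of girth $2p-3$ with no $C_p$-coloring, applying a $\frac{p-1}{2}$-$C_p$-replacement to every edge, invoking Proposition~\ref{PROP:d-k-replacement}, and counting cycle lengths gives exactly the stated bound. Your cycle-length accounting ($\ell\cdot\frac{p-1}{2}$ with $\ell\ge 2p-3$) is right.

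The gap is the ``substantive step'' you explicitly defer: you never actually construct the girth-$(2p-3)$ planar graph with no $C_p$-coloring, you only assert that one can be ``assembled delicately.'' Worse, the strategy you sketch for finding it --- start from a small obstruction and ``expand each edge into a carefully chosen path or small gadget'' --- runs against the grain of the problem: subdividing edges into paths is precisely what \emph{creates} slack (more colors become available, as in Lemma~\ref{LEM:necklace-prime}(i)), so naive expansion tends to destroy the obstruction rather than preserve it. You also misattribute Figure~\ref{Fig:constra}(b): it depicts the already-inflated graph $H_5$, not the girth-$(2p-3)$ base graph. The paper resolves the gap with DeVos's explicit construction $W_p$: take a $(2p-3)$-cycle $z_0z_1\cdots z_{2p-4}z_0$, add one hub $x$, and join $x$ to each $z_i$ by a path of length $p-2$. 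This planar graph has girth $2p-3$, and the argument that it has no $C_p$-coloring is short: if some $z_i$ receives the same color as $x$, identifying them along the spoke yields a properly colored odd cycle of length $p-2<p$, which is impossible; otherwise all $z_i$ avoid the color of $x$, so the $(2p-3)$-cycle is colored using only $p-1$ colors, i.e.\ it maps homomorphically to a path, forcing the odd cycle to be $2$-colorable --- a contradiction. Without this (or some equally concrete) construction, your proposal establishes the inflation machinery but not the lower bound itself.
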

\begin{proof}
  Construct a graph $W_p$ from a $(2p-3)$-cycle $z_0z_1\ldots z_{2p-4}z_0$ by adding a new center vertex $x$ connecting each $z_i$ with a new path of length $p-2$ for $0\le i\le 2p-4$. This graph $W_p$ was constructed by DeVos (see \cite{BKKW2004}) to show the tightness of  Conjecture \ref{CONJ:circular-conj}, i.e., $W_p$ is a planar graph of girth $2p-3$ without $C_p$-coloring. To see that $W_p$ is not $C_p$-colorable, we prove by contradiction. Suppose to the contrary that $\omega$ is a $C_p$-coloring of $W_p$.  If $\omega(x)=\omega(z_i)$ for some $i$, then after identifying $x$ and $z_i$ in the path of length $p-2$ between $x$ and $z_i$, we obtain a  $C_p$-coloring of $(p-2)$-cycle, a contradiction. So $\omega(x)\neq \omega(z_i)$ for each $0\le i\le 2p-4$. Hence the $(2p-3)$-cycle $z_0z_1\ldots z_{2p-4}z_0$ admits a $C_p$-coloring with colors $\{0,1,\ldots, p-1\}\setminus\{\omega(x)\}$. This provides a homomorphism from the $(2p-3)$-cycle to a path of length $p-2$; in particular, it indicates that the  $(2p-3)$-cycle is $2$-colorable, a contradiction.

  Construct a graph $H_p$ from $W_p$ by  applying $(\frac{p-1}{2})$-$C_p$-replacement operation on each edge of $W_p$. See Figure \ref{Fig:constra}(b) for the construction of $H_5$. Since $W_p$ is not $C_p$-colorable, we obtain that $H_p$ is not $C_p$-colorable by Proposition \ref{PROP:d-k-replacement}. As $W_p$ has girth $2p-3$, $H_p$ is of girth $p$ and without cycles of length from $p+1$ to $(2p-3)\frac{p-1}{2}-1$.
\end{proof}

Next, we shall prove Proposition \ref{PROP1.1-1.4} using analogous approaches.

\begin{proposition}(restatement of Proposition \ref{PROP1.1-1.4})
Let $p\ge 5$ be a prime. If $f(p)\le p(p-2)$, then every  planar graph of girth at least $2p-2$ is $C_p$-colorable.
\end{proposition}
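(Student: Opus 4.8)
The plan is to reduce the girth-$(2p-2)$ coloring problem to the hypothesis $f(p)\le p(p-2)$ by a $C_p$-replacement construction analogous to the ones in Propositions~\ref{PROP:nonprimek} and~\ref{PROP:p-prime-quadratic-bound}. Let $G$ be a planar graph of girth at least $2p-2$; we want a $C_p$-coloring of $G$. The natural move is to apply a $d$-$C_p$-replacement operation on every edge of $G$ for a suitable $d$ coprime to $p$ (any $1\le d\le p-1$ works, since $p$ is prime), obtaining a graph $G(d,p)$. By Proposition~\ref{PROP:d-k-replacement}, $G$ is $C_p$-colorable if and only if $G(d,p)$ is. So it suffices to choose $d$ so that $G(d,p)$ is a planar graph of girth $p$ with no cycles of length from $p+1$ to $p(p-2)$, and then invoke the hypothesis $f(p)\le p(p-2)$.

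First I would check planarity and girth: $G(d,p)$ is obviously planar (replacing an edge by a $p$-cycle in the plane preserves planarity), and since $G$ has girth at least $2p-2 \ge p$ while each inserted $p$-cycle has length exactly $p$, the girth of $G(d,p)$ is exactly $p$, the short cycles being precisely the inserted ones. The crux is the cycle-length analysis. A cycle in $G(d,p)$ either is one of the inserted $p$-cycles, or it ``projects'' to a closed walk in $G$ of some length $\ell\ge \mathrm{girth}(G)\ge 2p-2$; traversing an edge of $G$ contributes a sub-path of the corresponding $p$-cycle, whose length is either $d$ or $p-d$ depending on the direction of traversal around the inserted cycle. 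Thus the length of such a cycle in $G(d,p)$ is a sum of $\ell$ terms, each equal to $d$ or $p-d$, hence lies between $\ell\cdot\min\{d,p-d\}$ and $\ell\cdot\max\{d,p-d\}$. To force every non-inserted cycle to have length $> p(p-2)$ I would take $d=\frac{p-1}{2}$ (so $\min\{d,p-d\}=\frac{p-1}{2}$); then every non-inserted cycle has length at least $(2p-2)\cdot\frac{p-1}{2}=(p-1)^2 = p^2-2p+1 > p(p-2)$, which is exactly what is needed. Hence $G(d,p)$ has no cycles of length in $\{p+1,\dots,p(p-2)\}$.

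With these two facts in hand, the hypothesis $f(p)\le p(p-2)$ applies directly to $G(d,p)$: it is a planar graph of girth $p$ with no cycles of length from $p+1$ to $p(p-2)$, so it is $C_p$-colorable. By Proposition~\ref{PROP:d-k-replacement} (with $\gcd(d,p)=1$, which holds since $p$ is prime and $1\le d=\frac{p-1}{2}<p$), $G$ itself is $C_p$-colorable, completing the proof.

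I expect the main obstacle to be the cycle-length bookkeeping in $G(d,p)$: one must argue carefully that a cycle of $G(d,p)$ not contained in a single inserted $p$-cycle really does correspond to a closed walk in $G$ of length at least the girth of $G$, so that short non-inserted cycles cannot arise, and that the per-edge contribution is always $d$ or $p-d$ (never something smaller, e.g.\ $0$, which would require the walk to backtrack along an inserted $p$-cycle, and even then the contribution would be $p>p(p-2)$ only if... — actually a backtrack contributes the full length $p$ or $0$, and the $0$ case collapses to a shorter walk, so this needs a clean treatment). Everything else is a short computation comparing $(p-1)^2$ with $p(p-2)$.
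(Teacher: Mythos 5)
Your proposal is correct and matches the paper's proof: apply the $\frac{p-1}{2}$-$C_p$-replacement to every edge, check that the result has girth $p$ and no cycles of length in $\{p+1,\dots,p(p-2)\}$, then invoke the hypothesis and Proposition~\ref{PROP:d-k-replacement}. Your concern at the end about ``backtracks'' resolves cleanly: a \emph{simple} cycle in $G(\frac{p-1}{2},p)$ that uses both arcs of an inserted $p$-cycle must be exactly that $p$-cycle, so every other cycle projects to a genuine cycle in $G$ of length $\ge 2p-2$ and picks up at least $\frac{p-1}{2}$ per edge, giving length $\ge (p-1)^2 > p(p-2)$ as you computed.
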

\begin{proof}
  Assume that $f(p)\le p(p-2)$. That is, every planar graph of girth $p$ without cycles of length from $p+1$ to $p(p-2)$  is $C_p$-colorable. Let $G$ be a planar graph of girth at least $2p-2$. Apply the $(\frac{p-1}{2})$-$C_p$-replacement operation on each edge of $G$ to obtain a graph $G(\frac{p-1}{2},p)$. Then $G(\frac{p-1}{2},p)$ is a planar graph of girth $p$ without cycles of length from $p+1$ to $p(p-2)$. Since $f(p)\le p(p-2)$, we know that $G(\frac{p-1}{2},p)$ is $C_p$-colorable. Hence $G$ is $C_p$-colorable as well by Proposition \ref{PROP:d-k-replacement}.
\end{proof}

Similar arguments also show that the $p=5$ case of Conjecture \ref{CONJ:f(p)} is stronger than the Five Color Theorem.

\begin{proposition}\label{PROPconto5color}
The truth of $f(5)\le 17$ implies that every  planar graph is 5-colorable.
\end{proposition}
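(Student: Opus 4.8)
The plan is to reduce the Five Color Theorem to an instance of the hypothesis $f(5)\le 17$ by applying two edge-replacement operations to an arbitrary planar graph. Given a planar graph $G$ (which we may assume has an edge, since forests are $2$-colorable), I will build a planar graph $G''$ of girth $5$ with no cycles of length from $6$ to $17$ such that $G''$ is $C_5$-colorable if and only if $G$ is properly $5$-colorable; the hypothesis then forces $G''$ to be $C_5$-colorable, and hence $G$ to be $5$-colorable.

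\emph{Step 1: a subdivision that converts $5$-coloring into $C_5$-coloring.} Let $G'$ be obtained from $G$ by replacing each edge $uv$ with an internally disjoint path $u\,a_{uv}\,b_{uv}\,v$ of length $3$. In any $C_5$-coloring $\varphi$, that is, any circular $(5,2)$-coloring, every edge $xy$ satisfies $\varphi(x)-\varphi(y)\equiv\pm 2\pmod 5$; hence along a path of length $3$ we get $\varphi(v)-\varphi(u)\equiv(\pm 2)+(\pm 2)+(\pm 2)\in\{1,2,3,4\}\pmod 5$, and conversely for each $t\in\{1,2,3,4\}$ some choice of the two internal colors realizes $\varphi(v)-\varphi(u)\equiv t$. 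Therefore a $C_5$-coloring of $G'$ restricts to a proper $5$-coloring of $G$, and any proper $5$-coloring of $G$, read in $\mathbb{Z}_5$, extends to a $C_5$-coloring of $G'$ by coloring the internal vertices of the pairwise internally disjoint paths independently. Thus $G'$ is $C_5$-colorable if and only if $G$ is $5$-colorable. Since subdivision only lengthens cycles, every cycle of $G'$ has length divisible by $3$ and at least $3\cdot\mathrm{girth}(G)\ge 9$.

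\emph{Step 2: a $C_5$-replacement that restores girth $5$.} Let $G''=G'(2,5)$ be obtained by applying to every edge of $G'$ the $2$-$C_5$-replacement used in Proposition~\ref{PROP1.1-1.4} (the $(\frac{p-1}{2})$-$C_p$-replacement with $p=5$). As $\gcd(2,5)=1$, Proposition~\ref{PROP:d-k-replacement} shows that $G''$ is $C_5$-colorable if and only if $G'$ is, hence if and only if $G$ is $5$-colorable. The graph $G''$ is planar; it has girth $5$, since each replaced edge creates a $5$-cycle, while every other cycle of $G''$ comes from a cycle of $G'$ of some length $\ell\ge 9$ by going through each of the $\ell$ gadgets the ``short'' way ($+2$) or the ``long'' way ($+3$), so its length lies in $[2\ell,3\ell]\subseteq[18,\infty)$. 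Hence $G''$ has girth $5$ and no cycle of length from $6$ to $17$, so the hypothesis $f(5)\le 17$ gives a $C_5$-coloring of $G''$ and therefore a proper $5$-coloring of $G$, as desired.

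The step needing care is the cycle-length accounting for $G''$: one must verify that every simple cycle of $G''$ is either one of the length-$5$ gadget cycles or projects to an honest simple cycle of $G'$, with nothing shorter hiding inside a single gadget or created by a cycle ``bouncing'' at a subdivision vertex. This is also where the threshold $17$ is pinned down: a triangle of $G$ becomes a $9$-cycle of $G'$ and then an $18$-cycle of $G''$, so the argument tolerates forbidding cycle lengths up to $17$ but would break at $18$.
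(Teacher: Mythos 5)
Your proof is correct and follows essentially the same two-step construction as the paper's: subdivide every edge of $G$ into a path of length $3$, then apply the $2$-$C_5$-replacement to every edge of the subdivision, invoking Proposition~\ref{PROP:d-k-replacement} (or the direct mod-$5$ calculation, which is equivalent) to transfer $C_5$-colorability back to a proper $5$-coloring of $G$. The only difference is that you spell out the cycle-length accounting showing the resulting graph has girth $5$ and no cycles of length $6$ through $17$, which the paper leaves implicit with ``by construction.''
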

\begin{proof}

Assume that $f(5)\le 17$, i.e., every planar graph of girth $5$ without cycles of length from $6$ to $17$ is $C_5$-colorable.
Let $G$ be a planar graph, and $H$ be the graph obtained from $G$ by replacing each edge with a path of length $3$.
Let $F$ be the graph obtained from $H$ by  applying $2$-$C_5$-replacement operation on each edge of $H$. Then by construction $F$ is a planar graph of girth $5$ without cycles of length from $6$ to $17$, and  hence $F$ is $C_5$-colorable by $f(5)\le 17$.
Now, by Proposition \ref{PROP:d-k-replacement} and Lemma \ref{LEM:cycle-Ck-precoloring},  the $C_5$-coloring $\omega$ of $F$ induces a proper $5$-coloring of $G$, since $\omega(u)\neq \omega(v)$ whenever $uv\in E(G)$.
\end{proof}


At the end of this section, we  define some  graphs, serving for reducible configurations in  later proofs.
\begin{definition}\label{DEF:necklace}
 Let  $G$ be a graph.

 (i)
A {\bf thread} in $G$ is a path  whose internal vertices are $2$-vertices in $G$.
The end vertices of the path are called the end vertices of the thread. A thread with end vertices $x,y$ is also called an $(x,y)$-thread, denoted by $T(x,y)$.
An $s$-thread is a thread with $s$ internal vertices.
A {\bf $(k_1,k_2,\ldots,k_t)$-thread} $T_x$ in $G$ is a subgraph consisting of distinct $k_1$-thread, $k_2$-thread, $\ldots$, $k_t$-thread which share a common end vertex $x$,
where $t\geq 3$.  The common end vertex $x$ is called a {\bf $(k_1,k_2,\ldots,k_t)$-vertex}.
Let $y_i$ be the other end vertex of the  $k_i$-thread, and  define $\{y_1,y_2,\ldots, y_t\}$ to be the end vertices of $T_x$. If $z$ is a $2$-vertex of an $(x,y)$-thread, then we say $x$ and $z$ are {\bf weakly adjacent}.

(ii) An {\bf $s$-necklace} in $G$ is a subgraph obtained from an $s$-thread by applying $C_k$-replacement operations on some edges. A vertex $z$ is an end vertex of the $s$-necklace if and only if $z$ is an end vertex of the $s$-thread. A necklace with end vertices $x,y$ is also called an $(x,y)$-necklace, denoted by $N(x,y)$.
A {\bf $(k_1,k_2,\ldots,k_t)$-necklace} $N_x$ is a subgraph obtained from a $(k_1,k_2,\ldots,k_t)$-thread $T_x$ by applying $C_k$-replacement operations on some edges. The vertex $x$ is called the center vertex of $N_x$. A vertex $z$ is an end vertex of the $(k_1,k_2,\ldots,k_t)$-necklace if and only if $z$ is an end vertex of the $(k_1,k_2,\ldots,k_t)$-thread.
A {\bf $(k_1,k_2;k_3)$-bull-necklace} is a subgraph obtained from a $(k_1,k_2,k_3)$-thread by applying $C_k$-replacement operations on some edges of the $k_3$-thread.
A {\bf $(k_1,k_2,\ldots,k_t)$-crown-necklace} is obtained from a $(k_1,k_2,\ldots,k_t)$-necklace by replacing the center vertex with a $k$-cycle.
A vertex $z$ is an end vertex of the $(k_1,k_2,\ldots,k_t)$-crown-necklace if and only if $z$ is an end vertex of the $(k_1,k_2,\ldots,k_t)$-necklace.
See Figure \ref{FIG: thread-necklace}  for some examples.
\end{definition}

\begin{figure}[tb]
\begin{center}
\begin{tikzpicture}[scale=0.6]
\tikzstyle{mynodestyle} = [draw,shape=circle,outer sep=0,inner sep=1.2,minimum size=3.5,fill=black]
\tikzstyle{myedgestyle} = [line width=0.6pt, black]

\node  [draw,shape=circle,outer sep=0,inner sep=1.2,minimum size=6,fill=black] (v2) at (-4.5,0.5) {};
\node[mynodestyle] (v6) at (-6.5,2.5) {};
\node [mynodestyle](v5) at (-4,4) {};
\node[mynodestyle] (v7) at (-6.5,-1.5) {};
\node[mynodestyle] (v8) at (-5,-2) {};
\node[mynodestyle] (v9) at (-3,-1) {};
\node [mynodestyle](v3) at (-2.5,1) {};
\node  [draw,shape=circle,outer sep=0,inner sep=1.2,minimum size=6,fill=black] (v30) at (4.5,0.5) {};
\node[mynodestyle] (v36) at (6.2969,2.6871) {};

\node [mynodestyle] (v40) at (6,-2.5) {};
\node(v1) at (13.5,0.5) {};

\node [mynodestyle] (v69) at (11.793,3.8867) {};
\node [mynodestyle] (v65) at (11,0) {};
\node[mynodestyle] (v70) at (12.0759,-1.9757) {};

\node[mynodestyle] (v57) at (15.8667,0.8062) {};
\node [mynodestyle] at (15.4694,2.5396) {};
\draw [line width=0.9pt, black]  (v1) circle ();
\draw [myedgestyle]  (16.485,2.8133) circle (1);
\draw  [myedgestyle] (12.8612,-1.3913) circle (1);

\draw  [myedgestyle] (11.6453,2.9101) circle (1);

\draw  [myedgestyle] (14.5551,2.2898) circle (1);
\draw [myedgestyle]  (2.7009,2.9465) circle (1);
\draw [myedgestyle]  (2.3282,-1.3308) circle (1);
\draw  [myedgestyle] (16.8109,0.4262) circle (1);
\draw  [myedgestyle] (5.0096,-0.3712) circle (1);
\draw [myedgestyle]  (3.88,1.3011) circle (1);

\draw [myedgestyle] (v2) edge (v3);
\draw [myedgestyle] (v2) edge (v5);
\draw [myedgestyle] (v2) edge (v6);
\draw [myedgestyle] (v2) edge (v7);
\draw [myedgestyle] (v2) edge (v8);
\draw[myedgestyle]  (v2) edge (v9);

\node [mynodestyle] at (-5.8008,1.8196) {};
\node [mynodestyle] at (-5.1828,1.2016) {};
\node [mynodestyle] (v10) at (-6.8429,3.1647) {};
\node [mynodestyle] at (-4.0959,3.0556) {};
\node [mynodestyle] at (-4.2898,1.9408) {};

\node [mynodestyle] at (-3.5385,0.7412) {};
\node [mynodestyle] (v12) at (-1.6603,1.1168) {};

\node [mynodestyle] at (-5.1016,-0.1434) {};
\node [mynodestyle] at (-5.8044,-0.7614) {};
\node [mynodestyle] at (-4.8108,-1.2945) {};
\node [mynodestyle] at (-4.6412,-0.3978) {};
\node [mynodestyle] (v11) at (-5.1259,-2.8213) {};

\draw [myedgestyle] (v6) edge (v10);
\draw [myedgestyle] (v8) edge (v11);
\draw [myedgestyle] (v3) edge (v12);

\node (v22) at (-1.1,0.7014) {};
\node (v21) at (-1.1888,1.8525) {};
\node (v16) at (-4.2655,4.5703) {};
\node (v17) at (-3.4537,4.6066) {};
\node (v15) at (-6.6133,3.961) {};
\node (v14) at (-7.3676,3.4676) {};
\node (v13) at (-7.409,3.0677) {};
\node (v29) at (-7.0153,-0.8375) {};
\node (v28) at (-7.1405,-2.1623) {};
\node (v27) at (-5.6812,-2.894) {};
\node (v25) at (-4.5907,-3.1121) {};
\node (v24) at (-2.87,-1.658) {};
\node (v23) at (-2.4147,-1.4029) {};
\node (v26) at (-5.1359,-3.4272) {};
\draw [myedgestyle] (v13) edge (v10);
\draw [myedgestyle] (v14) edge (v10);
\draw [myedgestyle] (v15) edge (v10);
\draw [myedgestyle] (v16) edge (v5);
\draw [myedgestyle] (v17) edge (v5);

\draw [myedgestyle] (v21) edge (v12);
\draw [myedgestyle] (v22) edge (v12);
\draw [myedgestyle] (v23) edge (v9);
\draw [myedgestyle] (v24) edge (v9);
\draw [myedgestyle] (v25) edge (v11);
\draw [myedgestyle] (v26) edge (v11);
\draw [myedgestyle] (v27) edge (v11);
\draw [myedgestyle] (v28) edge (v7);
\draw [myedgestyle] (v29) edge (v7);
\node [mynodestyle] (v32) at (3.1402,-0.6644) {};
\node [mynodestyle] (v31) at (3.6855,-0.1918) {};
\node [mynodestyle] (v39) at (5.6,-1.1612) {};
\node [mynodestyle] (v41) at (6.2543,-3.3545) {};
\node [mynodestyle] at (3.2735,2.1468) {};
\node [mynodestyle] (v35) at (5.7575,1.7348) {};
\node [mynodestyle] (v37) at (6.5694,3.3585) {};
\draw [myedgestyle] (v30) edge (v31);
\draw [myedgestyle] (v31) edge (v32);
\node [mynodestyle] (v33) at (2.7161,-2.2154) {};
\node [mynodestyle] (v34) at (2.6191,-3.0394) {};
\draw [myedgestyle] (v33) edge (v34);
\draw [myedgestyle] (v30) edge (v35);
\draw [myedgestyle] (v35) edge (v36);
\draw [myedgestyle] (v36) edge (v37);
\node [mynodestyle] (v38) at (6.4967,1.2986) {};
\draw [myedgestyle] (v30) edge (v38);
\node [mynodestyle] (v45) at (3.2856,3.8069) {};
\draw [myedgestyle] (v39) edge (v40);
\draw [myedgestyle] (v40) edge (v41);
\node (v44) at (3.0069,4.4733) {};
\node (v46) at (3.3946,4.4976) {};
\node (v47) at (3.843,4.4006) {};
\node (v42) at (2.1697,-3.6006) {};
\node (v43) at (2.9827,-3.6574) {};
\node (v53) at (5.6999,-3.9238) {};
\node (v54) at (6.3178,-3.9884) {};
\node (v55) at (7.0904,-3.8634) {};
\node (v52) at (6.8602,0.717) {};
\node (v51) at (7.1268,1.3471) {};
\node (v48) at (6.0997,3.9283) {};
\node (v49) at (6.7945,3.9444) {};
\node (v50) at (7.2358,3.4797) {};
\draw [myedgestyle] (v34) edge (v42);
\draw [myedgestyle] (v34) edge (v43);
\draw [myedgestyle] (v44) edge (v45);
\draw [myedgestyle] (v46) edge (v45);
\draw [myedgestyle] (v47) edge (v45);
\draw [myedgestyle] (v48) edge (v37);
\draw [myedgestyle] (v49) edge (v37);
\draw [myedgestyle] (v50) edge (v37);
\draw [myedgestyle] (v51) edge (v38);
\draw [myedgestyle] (v52) edge (v38);
\draw [myedgestyle] (v41) edge (v53);
\draw [myedgestyle] (v41) edge (v54);
\draw [myedgestyle] (v41) edge (v55);
\node [mynodestyle] (v64) at (12.4826,0.3534) {};
\node [mynodestyle] at (13.1491,-0.4221) {};
\node [mynodestyle] (v62) at (14.1185,-0.2524) {};
\node [mynodestyle] at (14.4699,0.6321) {};
\node [mynodestyle] (v67) at (12.6765,1.1532) {};
\node [mynodestyle] at (13.3551,1.5167) {};
\node [mynodestyle] (v56) at (14.0336,1.3834) {};
\node [mynodestyle] (v60) at (16.8691,3.722) {};
\node [mynodestyle] (v58) at (17.4144,-0.3009) {};
\node [mynodestyle] (v59) at (17.8748,-1.1855) {};
\node [mynodestyle] (v71) at (11.1376,-2.482) {};
\draw  (v56) edge (v57);
\draw  (v58) edge (v59);
\node [mynodestyle] (v61) at (17.5719,4.1098) {};
\draw  (v60) edge (v61);
\node [mynodestyle] (v68) at (12.2645,2.171) {};
\node [mynodestyle] (v72) at (10.6166,-2.9667) {};
\node [mynodestyle] (v66) at (10.2773,-0.1313) {};
\node [mynodestyle] (v63) at (15.0152,-1.3066) {};
\draw  (v62) edge (v63);
\draw  (v64) edge (v65);
\draw  (v65) edge (v66);
\draw  (v67) edge (v68);
\draw  (v70) edge (v71);
\draw  (v71) edge (v72);
\node (v73) at (11.6102,4.5824) {};
\node (v74) at (12.0222,4.5097) {};
\node (v75) at (12.5553,4.3158) {};
\node (v76) at (18.314,4.7522) {};
\node (v77) at (18.1201,3.6783) {};
\node (v82) at (15.0151,-2.2276) {};
\node (v81) at (15.4392,-1.9367) {};
\node (v80) at (15.9724,-1.6338) {};
\node (v78) at (17.6757,-1.9082) {};
\node (v79) at (18.5725,-1.8477) {};
\node (v85) at (10.0955,-3.6453) {};
\node (v84) at (10.7377,-3.6816) {};
\node (v83) at (11.3921,-3.718) {};
\node (v86) at (10.0228,-0.8825) {};
\node (v87) at (9.6956,-0.519) {};
\draw[myedgestyle]   (v73) edge (v69);
\draw[myedgestyle]   (v74) edge (v69);
\draw[myedgestyle]   (v75) edge (v69);
\draw [myedgestyle]  (v76) edge (v61);
\draw [myedgestyle]  (v77) edge (v61);
\draw [myedgestyle]  (v78) edge (v59);
\draw [myedgestyle]  (v79) edge (v59);
\draw[myedgestyle]   (v80) edge (v63);
\draw[myedgestyle]   (v81) edge (v63);
\draw [myedgestyle]  (v82) edge (v63);
\draw[myedgestyle]   (v83) edge (v72);
\draw [myedgestyle]  (v84) edge (v72);
\draw [myedgestyle]  (v85) edge (v72);
\draw[myedgestyle]   (v86) edge (v66);
\draw[myedgestyle]   (v87) edge (v66);

\node at (-3.1925,6.3) {\small (a) a $5$-necklace;};

\node at (10.955,6.3) {\small (b) a $(2,2;4)$-bull-necklace;};

\node at (-4.6152,-5.3) {\small (c) a $(0,2,2,3,2,3)$-thread;};

\node at (4.461,-5.3) {\small (d) a $(2,0,2,1,3)$-necklace;};

\node at (14.3,-5.3) {\small (e) a $(2,0,2,2,1,1)$-crown-necklace.};

\node at (-2.6685,-0.7505)  {\small $z_1$};
\node at (-1.8662,1.546)  {\small $z_2$};
\node at (-4.4736,4.0431) {\small $z_3$};
\node at (-7.1111,2.8497) {\small $z_4$};
\node at (-6.9807,-1.4324) {\small $z_5$};
\node at (-4.5839,-2.7161) {\small $z_6$};

\node at (6.6311,-3.0571) {\small $z_1$};
\node at (6.8518,1.6463){\small $z_2$};
\node at (6.0395,3.4515) {\small $z_3$};
\node at (3.8331,3.8225) {\small $z_4$};
\node at (2.1484,-2.9267) {\small $z_5$};

\node at (10.1211,-2.8666) {\small $z_1$};
\node at (15.1754,-0.9411) {\small $z_2$};
\node at (18.184,-0.921) {\small $z_3$};
\node at (17.1411,4.4042) {\small $z_4$};
\node at (11.2042,4.2136) {\small $z_5$};
\node at (9.9004,0.2624) {\small $z_6$};

\node  [draw,shape=circle,outer sep=0,inner sep=1.2,minimum size=5.5,fill=black] (v100) at (-6.1758,9) {};

\node  [draw,shape=circle,outer sep=0,inner sep=1.2,minimum size=5.5,fill=black] (v101) at (0.6044,9) {};

\draw  [myedgestyle] (-5.3815,8.9932) circle (.8);

\draw  [myedgestyle] (-3.8658,8.4617) circle (.8);

\draw  [myedgestyle] (-0.9293,9.4708) circle (.8);

\node [mynodestyle] at (-4.6008,8.6964) {};
\node [mynodestyle] (v4) at (-3.1668,8.8669) {};
\node [mynodestyle] (v18) at (-2.4246,8.9972) {};
\node [mynodestyle] (v19) at (-1.6023,9.07) {};
\node [mynodestyle] (v20) at (-0.2485,9.0775) {};
\draw [myedgestyle] (v4) edge (v18);
\draw [myedgestyle] (v18) edge (v19);
\draw [myedgestyle] (v20) edge (v101);
\node [mynodestyle] at (-5.7942,8.3417) {};
\node [mynodestyle] at (-5.022,8.2715) {};
\node [mynodestyle] at (-5.724,9.6955) {};
\node [mynodestyle] at (-5.002,9.7056) {};
\node [mynodestyle] at (-4.6209,9.3546) {};
\node [mynodestyle] at (-4.0994,9.2242) {};
\node [mynodestyle] at (-3.5478,9.2097) {};
\node [mynodestyle] at (-4.4103,7.9104) {};
\node [mynodestyle] at (-3.7986,7.6698) {};
\node [mynodestyle] at (-3.1668,8.0508) {};
\node [mynodestyle] at (-0.8401,8.6826) {};
\node [mynodestyle] at (-1.7127,9.661) {};
\node [mynodestyle] at (-1.3014,10.1713) {};
\node [mynodestyle] at (-0.6295,10.197) {};
\node [mynodestyle] at (-0.1682,9.7531) {};

\node (v88) at (-6.9575,9.5178) {};
\node (v89) at (-6.9475,8.9362) {};
\node (v90) at (1.4263,9.5379) {};
\node (v91) at (1.4263,9.1568) {};
\node (v92) at (1.4664,8.6554) {};
\draw [myedgestyle] (v100) edge (v88);
\draw [myedgestyle] (v100) edge (v89);
\draw [myedgestyle] (v101) edge (v90);
\draw [myedgestyle] (v101) edge (v91);
\draw [myedgestyle] (v101) edge (v92);

\node  [draw,shape=circle,outer sep=0,inner sep=1.2,minimum size=5.5,fill=black] (v100) at (9,10) {};

\node  [draw,shape=circle,outer sep=0,inner sep=1.2,minimum size=5.5,fill=black] (v105) at (6,8.5) {};

\node  [draw,shape=circle,outer sep=0,inner sep=1.2,minimum size=5.5,fill=black] (v106) at (6,11.5) {};

\node  [draw,shape=circle,outer sep=0,inner sep=1.2,minimum size=5.5,fill=black] (v101) at (15.5,10) {};

\draw  [myedgestyle] (10.7323,10.4798) circle (.8);
\draw  [myedgestyle] (13.7349,9.944) circle (.8);

\draw [myedgestyle] (v106) edge (v100);
\draw [myedgestyle] (v105) edge (v100);
\node [mynodestyle] at (6.879,11.0721) {};
\node [mynodestyle] at (7.7433,10.6399) {};
\node [mynodestyle] at (7.0173,9.0151) {};
\node [mynodestyle] at (8.0371,9.5164) {};
\node [mynodestyle] (v93) at (10.0422,10.0177) {};
\node [mynodestyle] (v94) at (11.425,10.0349) {};
\node [mynodestyle] (v95) at (12.9641,10.0122) {};
\node [mynodestyle] (v96) at (14.5049,9.9457) {};
\draw [myedgestyle] (v100) edge (v93);
\draw [myedgestyle] (v94) edge (v95);
\draw [myedgestyle] (v96) edge (v101);
\node [mynodestyle] at (10.7164,9.6892) {};
\node [mynodestyle] at (9.9904,10.7609) {};
\node [mynodestyle] at (10.4258,11.2131) {};
\node [mynodestyle] at (11.0866,11.205) {};
\node [mynodestyle] at (11.4769,10.7436) {};
\node [mynodestyle] at (13.2399,9.3435) {};
\node [mynodestyle] at (13.8277,9.1879) {};
\node [mynodestyle] at (14.329,9.4127) {};
\node [mynodestyle] at (13.3437,10.6399) {};
\node [mynodestyle] at (14.1605,10.6062) {};
\node (v98) at (4.7442,10.7372) {};
\node (v97) at (4.5402,11.6373) {};
\node (v103) at (5.0684,7.8063) {};
\node (v102) at (4.8772,8.744) {};
\node (v99) at (4.9591,9.527) {};
\draw [myedgestyle] (v97) edge (v106);
\draw [myedgestyle] (v98) edge (v106);
\draw [myedgestyle] (v99) edge (v105);
\draw [myedgestyle] (v102) edge (v105);
\draw [myedgestyle] (v103) edge (v105);
\node (v104) at (16.5946,10.7561) {};
\node (v107) at (16.9497,9.618) {};
\draw [myedgestyle] (v104) edge (v101);
\draw [myedgestyle] (v101) edge (v107);
\end{tikzpicture}
\caption{\small Examples for Definition \ref{DEF:necklace}.}
\label{FIG: thread-necklace}
\end{center}
\end{figure}
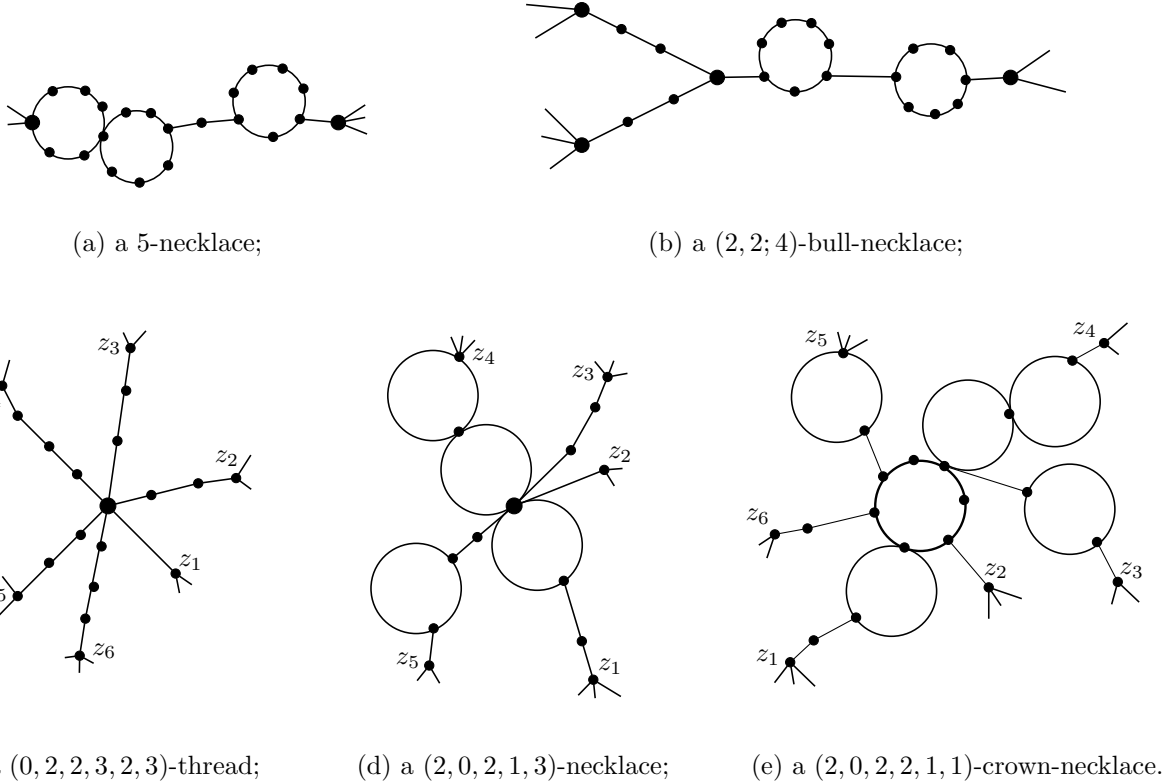

\section{The $C_p$-coloring for prime $p$}\label{sect3}
This section is aiming to show  $f(p)\le 2p^2+2p-5$ in Theorem \ref{THMmain-p}. We first present some reducible configurations under precoloring in Subsection \ref{subsectCp}, and then complete the proof in Subsection \ref{subsect-pf1.3} by a discharging method. Unlike some standard discharging arguments, our method mainly analyzes certain modified graphs obtained from the original graph, which benefits in handling some structures involving  $p$-cycles.

\subsection{Precoloring and reducible subgraphs for $C_p$-coloring}
\label{subsectCp}
Let $H$ be a thread, or a necklace, or a $(k_1,k_2,\ldots,k_t)$-thread, or a $(k_1,k_2,\ldots,k_t)$-necklace, or a $(k_1,k_2,\ldots,k_t)$-crown-necklace with $W$ being the end vertex set of $H$.
The graph $H$ is called {\em reducible} if in any graph $G$ containing $H$ as a subgraph,  any $C_p$-coloring of $G-(V(H)\setminus W)$ can be extended to a $C_p$-coloring of $G$. In other words, it is equivalent to say that $H$ is  $(\omega, W)$-colorable for any precoloring $\omega$ of $W$.
It is known from \cite{BKKW2004, PosteSR, Zhu01} that some threads and certain $(k_1,k_2,\ldots,k_t)$-threads are reducible configurations for $C_k$-coloring.
%
%
%
%
%
Our main reducible configurations in this section  are certain necklaces and crowns, generalizing from threads, for prime $p$.

We  need the following well-known Cauchy-Davenport Theorem over prime field.
For two sets $A,B$, define $A+B=\{a+b: a\in A, b\in B\}$.
\begin{theorem}\label{THM:CDthm}
  {\em (Cauchy-Davenport Theorem, \cite{Cauchy, Davenport})} Let $p$ be a prime. If $A$ and $B$ are two nonempty subset of ${\mathbb Z}_p$, then we have
  $$|A+B|\ge \min\{p, |A|+|B|-1\}.$$
\end{theorem}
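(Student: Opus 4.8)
The final statement is the Cauchy--Davenport Theorem, which the paper cites rather than proves; the plan below sketches a self-contained argument. My plan is to prove it via the polynomial method, using Alon's Combinatorial Nullstellensatz, which gives the cleanest route over the prime field $\mathbb{Z}_p$.

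\textbf{Reduction.} First I would dispose of the case $|A|+|B|>p$: for any $g\in\mathbb{Z}_p$ the sets $A$ and $g-B=\{g-b:b\in B\}$ have total size exceeding $p$, so they intersect, which means $g\in A+B$; hence $A+B=\mathbb{Z}_p$ and $|A+B|=p=\min\{p,|A|+|B|-1\}$. So from now on I may assume $n:=|A|+|B|-1\le p$, and I argue by contradiction, supposing $|A+B|\le n-1=|A|+|B|-2$.

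\textbf{Main step.} Pick a set $C$ with $A+B\subseteq C\subseteq\mathbb{Z}_p$ and $|C|=|A|+|B|-2$, and consider
$$f(x,y)=\prod_{c\in C}(x+y-c)\in\mathbb{Z}_p[x,y],$$
a polynomial of total degree $|A|+|B|-2$. Put $t_1=|A|-1$ and $t_2=|B|-1$, so $t_1+t_2=\deg f$. The coefficient of $x^{t_1}y^{t_2}$ in $f$ equals the coefficient of $x^{t_1}y^{t_2}$ in the homogeneous top part $(x+y)^{|A|+|B|-2}$, namely $\binom{|A|+|B|-2}{|A|-1}$. Since $0\le |A|+|B|-2\le p-1$, this binomial coefficient is a quotient of products of integers strictly less than $p$, hence is not divisible by $p$, i.e.\ it is nonzero in $\mathbb{Z}_p$. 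As $|A|>t_1$ and $|B|>t_2$, the Combinatorial Nullstellensatz produces $a\in A$ and $b\in B$ with $f(a,b)\ne 0$. But $a+b\in A+B\subseteq C$, so the factor $x+y-(a+b)$ vanishes at $(a,b)$, giving $f(a,b)=0$ — a contradiction. Therefore $|A+B|\ge|A|+|B|-1$, which together with the reduction proves the theorem.

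\textbf{Where the difficulty lies.} Everything here is formal except the nonvanishing of $\binom{|A|+|B|-2}{|A|-1}$ modulo $p$, and this is precisely the point where primality of $p$ (through the reduction $|A|+|B|-1\le p$) is essential. If one prefers to avoid invoking the Combinatorial Nullstellensatz, the classical alternative is induction on $\min\{|A|,|B|\}$ via the Dyson $e$-transform $A\mapsto A\cap(B+e)$, $B\mapsto B\cup(A-e)$: this operation preserves $|A|+|B|$ and can only shrink $A+B$, the base case $\min\{|A|,|B|\}=1$ is immediate, and primality enters because $\mathbb{Z}_p$ has no proper nontrivial subgroups, which guarantees that an appropriate $e$ yielding genuine progress always exists. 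Either route completes the proof; I would present the polynomial-method version as the main argument.
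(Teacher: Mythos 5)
The paper does not prove this statement; it invokes the Cauchy--Davenport Theorem as a known external result, citing Cauchy (1813) and Davenport (1935), and uses it as a black box in the proof of Lemma~\ref{LEM:necklace-prime}. So there is no in-paper argument to compare yours against. Your self-contained proof via Alon's Combinatorial Nullstellensatz is correct: the reduction disposing of $|A|+|B|>p$ by pigeonhole on $A$ and $g-B$ is standard, the polynomial $f(x,y)=\prod_{c\in C}(x+y-c)$ has total degree $t_1+t_2$ with $t_1=|A|-1$, $t_2=|B|-1$, the coefficient of $x^{t_1}y^{t_2}$ is $\binom{|A|+|B|-2}{|A|-1}$, which is nonzero in $\mathbb{Z}_p$ precisely because $|A|+|B|-2\le p-1$, and the Nullstellensatz then yields $(a,b)\in A\times B$ with $f(a,b)\ne 0$, contradicting $a+b\in C$. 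This is the Alon--Nathanson--Ruzsa argument; it is cleaner than the original transformation proofs (to which your closing remark on the Dyson $e$-transform correctly alludes) at the cost of importing the Combinatorial Nullstellensatz as a further black box. Either route is a legitimate way to make the paper's citation self-contained, and nothing is missing from your writeup.
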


\begin{lemma}\label{LEM:necklace-prime}
Let $N(x_0,x_{s+1})$ be an $s$-necklace, where for each $0\le i\le s$, there is either an edge $x_{i}x_{i+1}$ or a $p$-cycle between $x_{i}$ and $x_{i+1}$ consisting of a $k_i$-thread and a $(p-2-k_i)$-thread. Let $\omega$ be a precoloring of $x_0$, and let $B(x_i)$ be the set of available colors of $x_i$ from a coloring of $x_{i-1}$ for each $i\in [s+1]$, where $B(x_0)=\{\omega(x_0)\}$. Then each of the following holds.

(i) We have $|B(x_i)|\ge \min\{i+1, p\}$ for each $i\in [s+1]$.

 (ii) If $s\ge p-2$, then an $s$-necklace is reducible for $C_p$-coloring.
\end{lemma}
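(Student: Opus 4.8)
The plan is to analyze how the set of available colors propagates along the necklace, using the Cauchy--Davenport Theorem to show the size grows by at least one at each step until it saturates at $p$. First I would set up the recursion. Fix $i \in [s+1]$ and suppose a color $c$ has been chosen for $x_{i-1}$ from $B(x_{i-1})$; I need to understand the set of colors that $x_i$ can then receive. There are two cases depending on whether $x_{i-1}$ and $x_i$ are joined by an edge or by a $p$-cycle. If they are joined by an edge, then Lemma~\ref{LEM:cycle-Ck-precoloring} (or rather the definition of $C_p$-coloring) forces $\omega(x_i) - c \in \{\frac{p-1}{2}, \frac{p+1}{2}\} \pmod p$, so the allowed set for $x_i$ given $x_{i-1} = c$ is a translate of the fixed two-element set $D := \{\frac{p-1}{2}, \frac{p+1}{2}\}$; hence $B(x_i) \supseteq B(x_{i-1}) + D$ in $\mathbb{Z}_p$. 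If instead $x_{i-1}$ and $x_i$ lie on a $p$-cycle through a $k_{i-1}$-thread and a $(p-2-k_{i-1})$-thread, then by Lemma~\ref{LEM:cycle-Ck-precoloring} applied to that $p$-cycle, the admissible differences $\omega(x_i) - c \pmod p$ form a set $D'$ of size at least $2$ (the two values $\frac{p-1}{2}(k_{i-1}+1)$ and $\frac{p+1}{2}(k_{i-1}+1)$, which are distinct mod $p$ since $p$ is prime and $1 \le k_{i-1}+1 \le p-1$); so again $B(x_i) \supseteq B(x_{i-1}) + D'$ with $|D'| \ge 2$.

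Given this, part~(i) follows by induction on $i$. The base case $i=0$ is $|B(x_0)| = 1$ by definition. For the inductive step, from $B(x_i) \supseteq B(x_{i-1}) + D''$ where $|D''| \ge 2$, the Cauchy--Davenport Theorem (Theorem~\ref{THM:CDthm}) gives
\[
|B(x_i)| \ge \min\{p,\ |B(x_{i-1})| + |D''| - 1\} \ge \min\{p,\ |B(x_{i-1})| + 1\}.
\]
Combining with the inductive hypothesis $|B(x_{i-1})| \ge \min\{i, p\}$ yields $|B(x_i)| \ge \min\{i+1, p\}$, as desired. (One should be slightly careful here that the containment $B(x_i) \supseteq B(x_{i-1}) + D''$ is genuinely correct: a color $a \in B(x_{i-1})$ together with the full freedom of $D''$ produces all of $a + D''$ as legitimately extendable colors for $x_i$, and the union over $a \in B(x_{i-1})$ is exactly the sumset; no color in the sumset is spuriously excluded because each such color is witnessed by some valid choice for $x_{i-1}$ and the intervening thread.)

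For part~(ii), suppose $s \ge p-2$. We want to show the necklace is reducible, i.e., for any precoloring $\omega$ of both $x_0$ and $x_{s+1}$ with $\omega(x_0)\omega(x_{s+1})$ a proper $C_p$-edge-coloring when $x_0, x_{s+1}$ are themselves adjacent (or more generally for any precoloring of the end set $W$), the coloring extends over the internal vertices. By part~(i), $|B(x_s)| \ge \min\{s+1, p\} = p$ since $s+1 \ge p-1$ — wait, I need $|B(x_s)| = p$, which requires $s+1 \ge p$, i.e., $s \ge p-1$; with $s \ge p-2$ I get $|B(x_s)| \ge p-1$. So the right endpoint to examine is one step further: $|B(x_{s+1})| \ge \min\{s+2, p\} = p$ since $s \ge p-2$. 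Thus whatever color $\omega(x_{s+1})$ was assigned, it lies in $B(x_{s+1})$, which means there is a consistent choice of colors for $x_0, x_1, \ldots, x_s$ realizing it; reading the recursion backwards, fix $\omega(x_{s+1})$, then pick $x_s$ from the nonempty preimage, and so on down to $x_0$. I should double-check the edge-case bookkeeping when the final link $x_s x_{s+1}$ is a $p$-cycle versus an edge and when $W$ contains only $x_0$ (if $x_{s+1}$ has degree constraints from outside), but in all cases the size bound $|B(x_{s+1})| = p$ leaves no color for $x_{s+1}$ unreachable. The main obstacle I anticipate is purely organizational: making the "extend a $C_p$-coloring of $G - (V(H)\setminus W)$" formulation match cleanly with the available-color recursion, i.e., verifying that the colors already present on $W$ from the rest of $G$ are exactly the constraints encoded by $\omega$, and handling the asymmetry between the "edge" links and the "$p$-cycle" links uniformly via Lemma~\ref{LEM:cycle-Ck-precoloring}. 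The number-theoretic heart — distinctness of the two shift values mod $p$ and the Cauchy--Davenport step — is short and robust once $p$ is prime.
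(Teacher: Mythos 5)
Your proof is correct and follows essentially the same approach as the paper: induction with Cauchy--Davenport for part (i), and using the size bound from (i) to show the precolored endpoint is reachable for part (ii). The only cosmetic difference is in (ii), where you push the estimate one more step to conclude $|B(x_{s+1})|=p$ and hence $\omega(x_{s+1})\in B(x_{s+1})$, whereas the paper stops at $|B(x_s)|\ge p-1$ and intersects with the two-element backward constraint from $x_{s+1}$; these are equivalent since the former is exactly one Cauchy--Davenport step past the latter.
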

\begin{proof}
  (i) For any $s\ge i\ge 0$, we shall count the number of colors $\omega(x_{i+1})$ that can be extended from a color $\omega(x_i)$ of $x_i$.    Note that $x_0$ receives a fixed coloring $\omega(x_0)$. If $x_0x_1$ is an edge in $G$, then we have $\omega(x_1)\in\{\omega(x_0)+\frac{p-1}{2}, \omega(x_0)-\frac{p-1}{2}\}$, that is, $B(x_1)=\{\omega(x_0)+\frac{p-1}{2}, \omega(x_0)-\frac{p-1}{2}\}$. The arithmetic operations here and below are taken modulo $p$. If there is a $p$-cycle between $x_0$ and $x_1$ which consists of a $k_0$-thread and a $(p-2-k_0)$-thread, then by Lemma \ref{LEM:cycle-Ck-precoloring} we have $\omega(x_1)\in\{\omega(x_0)+\frac{p-1}{2}(k_0+1), \omega(x_0)-\frac{p-1}{2}(k_0+1)\}$, which gives $B(x_1)=\{\omega(x_0)+\frac{p-1}{2}(k_0+1), \omega(x_0)-\frac{p-1}{2}(k_0+1)\}$.  Hence, in any case, we have $|B(x_1)|=2$.

   Below we shall apply induction to show  $|B(x_i)|\ge \min\{i+1, p\}$ for each $i\in [s+1]$. The basic case $i=1$ is proved above. Assume the statement $|B(x_i)|\ge \min\{i+1, p\}$ holds for any integer at most $i$. For the case $i+1$, we shall show that $|B(x_{i+1})|\ge \min\{i+2, p\}$. Similar as before,   if $x_ix_{i+1}$ is an edge of $G$, then
   $$B(x_{i+1})=\{b+\frac{p-1}{2},b-\frac{p-1}{2}: b\in B(x_i)\};$$
  if there is a $p$-cycle between $x_i$ and $x_{i+1}$  consisting of a $k_i$-thread and a $(p-2-k_i)$-thread, then by Lemma \ref{LEM:cycle-Ck-precoloring} we have
  $$B(x_{i+1})=\{b+\frac{p-1}{2}(k_i+1),b-\frac{p-1}{2}(k_i+1): b\in B(x_i)\}.$$
  Using the notation in   Theorem \ref{THM:CDthm}, we have
  \[\text{either~} B(x_{i+1})=B(x_i)+\{\frac{p-1}{2}, -\frac{p-1}{2}\}~\text{or~} B(x_{i+1})=B(x_i)+\{\frac{p-1}{2}(k_i+1), -\frac{p-1}{2}(k_i+1)\}.
  \]
   By Theorem \ref{THM:CDthm}, we obtain that $|B(x_{i+1})|\ge \min\{|B(x_i)|+1,p\}\ge \min\{i+2, p\}$. This proves the claim that $|B(x_i)|\ge \min\{i+1, p\}$ for each $i\in[s+1]$.

   (ii) Fix a $C_p$-coloring $\omega$ of $G-(V(N(x_0,x_{s+1}))\setminus\{x_0, x_{s+1}\})$. We show that $\omega$ can be extended to a $C_p$-coloring of $G$. We still let $B(x_i)$ be the set of available colors of $x_i$ from a coloring of $x_{i-1}$ for each $1\le i \le s$, where $B(x_0)=\{\omega(x_0)\}$.  By (i), we particularly have that $|B(x_{j})|\ge p-1$ for each $p-2\le j\le s$. For the $s$-necklace $N(x_0,x_{s+1})$, $\omega(x_{s+1})$ is a fixed color, and so its restriction requires that $\omega(x_{s})\in \{\omega(x_{s+1})+\frac{p-1}{2}, \omega(x_{s+1})-\frac{p-1}{2}\}$ when $x_{s}x_{s+1}$ is an edge, and $\omega(x_{s})\in \{\omega(x_{s+1})+\frac{p-1}{2}(k_{s}+1), \omega(x_{s+1})-\frac{p-1}{2}(k_{s}+1)\}$ when there is a $p$-cycle between $x_{s}$ and $x_{s+1}$ consisting of $k_{s}$-thread and a $(p-2-k_{s})$-thread.
Since $|B(x_{s})|\ge p-1$, we have both $B(x_{s})\cap \{\omega(x_{s+1})+\frac{p-1}{2}, \omega(x_{s+1})-\frac{p-1}{2}\}\neq \emptyset$ and $B(x_{s})\cap \{\omega(x_{s+1})+\frac{p-1}{2}(k_{s}+1), \omega(x_{s+1})-\frac{p-1}{2}(k_{s}+1)\}\neq \emptyset$. Therefore, there exists an available color for the choice of $x_{s}$ in $B(x_{s})$, and so $\omega$ can be extended to a $C_p$-coloring of $G$ by appropriately coloring each of $x_1,x_2,\ldots, x_s$ and by Lemma \ref{LEM:cycle-Ck-precoloring}.
\end{proof}

\begin{lemma}\label{LEM:crown-prime}
 For a $(k_1,k_2,\ldots,k_t)$-necklace or a $(k_1,k_2,\ldots,k_t)$-crown-necklace,  if it holds that $$ \max_{1\le i\le t}\{k_i\}\le p-2 ~~\text{and}~~ \sum_{i=1}^t k_i\ge (p-2)t-p+1,$$
  then it is reducible for $C_p$-coloring.
\end{lemma}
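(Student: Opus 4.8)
The plan is to reduce both cases to a short counting argument over $\mathbb{Z}_p$, using Lemma \ref{LEM:necklace-prime}(i) to control the sizes of sets of admissible colors and then the union bound together with the hypothesis $\sum_i k_i\ge (p-2)t-p+1$. Write $W=\{y_1,\dots,y_t\}$ for the end vertex set of $H$, fix a graph $G$ containing $H$, and fix a $C_p$-coloring of $G-(V(H)\setminus W)$; this assigns fixed colors $\omega(y_1),\dots,\omega(y_t)$. In the $(k_1,\dots,k_t)$-necklace case, let $x$ be the center vertex. The branch of $H$ joining $y_i$ to $x$ is a $k_i$-necklace, so applying Lemma \ref{LEM:necklace-prime}(i) along that branch with $y_i$ in the role of $x_0$, the set $A_i\subseteq\mathbb{Z}_p$ of colors that $x$ may receive while still permitting the branch to be $C_p$-colored (each internal junction vertex colored consistently, and each $p$-cycle link then filled in via Lemma \ref{LEM:cycle-Ck-precoloring}) satisfies $|A_i|\ge\min\{k_i+2,p\}$. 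Since $k_i\le p-2$ this gives $p-|A_i|\le p-k_i-2$.

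Next I would combine the hypotheses. We want $\bigcap_{i=1}^t A_i\neq\emptyset$, and by the union bound $\bigl|\bigcap_{i=1}^t A_i\bigr|\ge p-\sum_{i=1}^t(p-|A_i|)\ge p-\sum_{i=1}^t(p-k_i-2)=p(1-t)+2t+\sum_{i=1}^t k_i$. The assumption $\sum_i k_i\ge (p-2)t-p+1$ makes the right-hand side at least $1$, so there is a color $c$ lying in every $A_i$. Coloring $x$ by $c$ and then, by the definition of $A_i$, extending through each branch in turn, we obtain a $C_p$-coloring of $G$ agreeing with the given one on $G-(V(H)\setminus W)$. This proves reducibility of the necklace.

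For a $(k_1,\dots,k_t)$-crown-necklace the center $x$ is replaced by a $p$-cycle $C=u_0u_1\cdots u_{p-1}u_0$, and branch $i$ joins $y_i$ to some vertex $w_i=u_{n_i}$ of $C$; exactly as above the set $A_i$ of admissible colors for $w_i$ satisfies $|A_i|\ge\min\{k_i+2,p\}$. Here I would additionally use that, by the proof of Lemma \ref{LEM:cycle-Ck-precoloring}, once one of the two orientations of $C$ is fixed, a proper $C_p$-coloring of $C$ is completely determined by the color $c_0$ of $u_0$ through $c(u_j)\equiv c_0+\frac{p-1}{2}j\pmod p$. Hence the constraint $c(w_i)\in A_i$ becomes $c_0\in A_i-\frac{p-1}{2}n_i=:A_i'$, a translate of $A_i$ with $|A_i'|=|A_i|$, and the identical union-bound computation gives $\bigl|\bigcap_{i=1}^t A_i'\bigr|\ge 1$. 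Picking such a $c_0$, coloring $C$ accordingly, and extending each branch completes this case.

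I do not expect a genuine obstacle here; the only care needed is bookkeeping. First, one must check that membership of a color in $A_i$ really does propagate to a full coloring of branch $i$ — this is precisely the induction in Lemma \ref{LEM:necklace-prime}(i) read backwards, with Lemma \ref{LEM:cycle-Ck-precoloring} used to color each $p$-cycle link once its two pole vertices are colored. Second, in the crown case one must correctly parametrize the $C_p$-colorings of the central cycle $C$ so that the constraints imposed by the branches are again translates of the $A_i$ (this is where it matters that $\frac{p-1}{2}$ is invertible mod $p$, which is also the fact underlying the ``$+1$ per step'' growth in Lemma \ref{LEM:necklace-prime}). With these in hand, the inequality $\sum_i k_i\ge (p-2)t-p+1$ is exactly what is needed to force the relevant intersection of color sets to be nonempty, and nothing further remains.
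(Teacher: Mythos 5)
Your argument is correct and is essentially the paper's own proof: for each branch you invoke Lemma \ref{LEM:necklace-prime}(i) to get a set of at least $k_i+2$ admissible colors at the junction, in the crown case you translate these sets so they all constrain the color of a single reference vertex of the central $p$-cycle, and you then apply the inclusion--exclusion (union) bound, which the hypothesis $\sum_i k_i\ge (p-2)t-p+1$ makes $\ge 1$. The only departures from the paper are cosmetic relabelings (and the aside about invertibility of $\tfrac{p-1}{2}$ being what makes the translates work is slightly off — a translate preserves cardinality regardless — but that remark is not load-bearing).
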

\begin{proof}
Let $H$ be a  $(k_1,k_2,\ldots,k_t)$-necklace or a $(k_1,k_2,\ldots,k_t)$-crown-necklace with end vertex set $W$. For each $i\in[t]$, let $x_i, y_i$ be the end vertices of the $k_i$-necklace in $H$, where $x_i\in W$. If $H$ is a $(k_1,k_2,\ldots,k_t)$-necklace, then $y_1=y_2=\ldots=y_t$ is a common vertex. If $H$ is a $(k_1,k_2,\ldots,k_t)$-crown-necklace, then $y_1, y_2, \ldots, y_t$ (may or may not be identical) are lying in a common $p$-cycle. In the later case, suppose that we color a selected vertex $y_0$ of the common $p$-cycle with color $b$. Then denote the color of $y_i$ by $\varphi(y_i)=b+\frac{p-1}{2}d_i$, where $d_i$ is the distance from $y_0$ to $y_i$ in the cyclic order for each $1\le i\le t$. In the former case, we apply  the same notation and set that $y_0=y_1=y_2=\ldots=y_t$,  $\omega(y_i)=b$ and $d_i=0$ for each $i$.

Fix a precoloring $\omega$ of $W$.  We show that $\omega$ can be extended to a $C_p$-coloring of $H$ by selecting an appropriate value of $b$ with application of Lemma \ref{LEM:necklace-prime}(i).

For each $1\le i \le t$, let $B_i$ be the set of available colors of $y_i$ such that the coloring $\omega(x_i)$ and $\omega(y_i)\in B_i$ can be extended to a $C_p$-coloring of the $k_i$-necklace. By Lemma \ref{LEM:necklace-prime}(i), we have $|B_i|\ge k_i+2$. Let $D_i=\{\beta: \beta=\alpha-\frac{p-1}{2}d_i, \alpha\in B_i\}$. Clearly,  $|D_i|=|B_i|\ge k_i+2$.
%
Thus we have
\begin{eqnarray*}
  |\bigcap_{i=1}^t D_i|&\ge& \sum_{i=1}^t |D_i| - (t-1)|\bigcup_{i=1}^t D_i|\\
  &\ge& \sum_{i=1}^t (k_i+2)-(t-1)p\\
  &=& \sum_{i=1}^tk_i -(p-2)t+p\ge 1.
\end{eqnarray*}
Hence $\bigcap_{i=1}^t D_i\neq \emptyset$ holds. Then we can select an element $b\in \bigcap_{i=1}^t D_i$ and color $y_i$ with $\omega(y_i)=\varphi(y_i)=b+\frac{p-1}{2}d_i$ for each $i\in [t]$. By definition, the coloring $\omega(x_i)$ and $\omega(y_i)$ can be extended to a $C_p$-coloring of the $k_i$-necklace for each $i\in [t]$.  Therefore, $H$ is reducible for $C_p$-coloring.
\end{proof}

\subsection{The proof of Theorem~\ref{THMmain-p}}
\label{subsect-pf1.3}



By Proposition \ref{PROP:nonprimek}, $f(k)$ does not exist if $k>0$ is an odd nonprime integer. Proposition \ref{PROP:p-prime-quadratic-bound} indicates $p^2-\frac{5}{2}p+\frac{3}{2}\le f(p)$ for a prime $p\ge 5$. To complete the proof of Theorem~\ref{THMmain-p}, it suffices to show that every planar graph of girth $p$ without cycles of length from $p+1$ to $2(p-1)(p+2)-1$ is $C_p$-colorable. In fact, we show the following mild stronger theorem.

\begin{theorem}\label{THM-main1-p}
Let $G$ be a plane graph of girth $p$ without cycles of length from $p+1$ to $2(p-1)(p+2)-1$, and let $\omega$ be a precoloring of a $p$-cycle $C$ of $G$. Then $G$ is $(\omega,V(C))$-colorable.
\end{theorem}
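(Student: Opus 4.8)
The proof will be a standard discharging argument applied to a minimal counterexample $G$, but with the twist advertised in the introduction: instead of working directly with $G$, I will work with an auxiliary graph obtained from $G$ by contracting the precolored cycle $C$ and simplifying threads/necklaces, so that $p$-cycles of $G$ behave like edges and the structural lemmas from Subsection~\ref{subsectCp} can be applied cleanly. Concretely, suppose for contradiction that $G$ is a counterexample with $|V(G)|+|E(G)|$ minimum. First I would record basic consequences of minimality: $G$ is connected, has minimum degree at least $2$ (a vertex of degree $\le 1$ or more generally any vertex not on $C$ whose deletion leaves a colorable graph can be colored last), and — using Lemmas~\ref{LEM:necklace-prime} and~\ref{LEM:crown-prime} — $G$ contains no reducible necklace, bull-necklace, or crown-necklace of the sizes guaranteed reducible there. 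In particular no thread with $\ge p-2$ internal vertices, no long necklace, and no $(k_1,\dots,k_t)$-(crown-)necklace with $\max k_i\le p-2$ and $\sum k_i$ large. The girth/cycle-length hypothesis also forces every cycle of $G$ other than those of length exactly $p$ to be very long (length $\ge 2(p-1)(p+2)$), which severely restricts how short faces can sit together.

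**The discharging setup.** I would assign to each vertex $v$ the charge $d(v)-4$ and to each face $f$ the charge $\ell(f)-4$ (i.e. the standard Euler-formula charges for planar graphs, summing to $-8$, after the usual treatment of the outer face / the precolored cycle $C$, which I would take to bound a face). Because $G$ has girth $p\ge 5$, every face has length $\ge p\ge 5$, so faces are not the problem; the deficiency lives entirely at $2$- and $3$-vertices. The key point is that, after the $C_p$-replacement modifications, a $p$-face essentially acts like a single edge joining two $3^+$-vertices with a string of $2$-vertices between them, so I would discharge by moving charge from long faces (and from $C$) along threads to the $2$-vertices and to the low-degree branch vertices $x$ of the forbidden necklaces. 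The rules would be designed so that after discharging: (a) every $2$-vertex ends with charge $\ge 0$; (b) every $3$-vertex ends with charge $\ge 0$, using that a $3$-vertex cannot be incident to three short threads/necklaces without creating a reducible $(k_1,k_2,k_3)$-(bull-)necklace; (c) every $4^+$-vertex keeps charge $\ge 0$; (d) every face of length $>p$ and the cycle $C$ end with charge $\ge 0$, which is where the huge lower bound $2(p-1)(p+2)$ on non-$p$-cycle lengths is spent — a long face has enough length to pay all the $2$-vertices hanging off it. Summing gives total charge $\ge -8$ with equality impossible, a contradiction.

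**Where the real work is.** The main obstacle is the discharging analysis around low-degree vertices that are shared between several $p$-cycles — this is exactly the configuration that the necklace/crown-necklace lemmas were built to handle, and the bookkeeping of "how much charge flows into a $3$-vertex that is the center of a long chain of $p$-cycles" is delicate. The bound $f(p)\le 2p^2+2p-5 = 2(p-1)(p+2)-1$ is presumably not tight (the paper conjectures $p(p-2)$), which suggests the discharging is somewhat lossy; I would expect to need a careful case split on the "type" of a branch vertex (how many incident threads, their lengths mod the reducibility thresholds $p-2$ and $(p-2)t-p+1$), and to verify in each case that minimality of $G$ together with Lemma~\ref{LEM:crown-prime} excludes it or forces enough incident long faces to cover the charge. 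A secondary technical point is the reduction to the auxiliary contracted graph: I must check that contracting $C$ to a point (and identifying colors via Lemma~\ref{LEM:cycle-Ck-precoloring}) preserves planarity, girth, and the forbidden-cycle-length condition well enough that the discharging on the modified graph transfers back a coloring of $G$ extending $\omega$. Once those two pieces are in place, the global count is routine.
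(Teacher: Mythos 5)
Your high-level strategy matches the paper's: minimal counterexample, reducibility of necklaces and crown-necklaces (Lemmas~\ref{LEM:necklace-prime} and~\ref{LEM:crown-prime}), passage to an auxiliary graph in which $p$-cycles are collapsed, then discharging. But there is a genuine gap: the proposal never articulates the step that actually makes this work, namely how to collapse a facial $p$-cycle with \emph{three or more} attachment-vertices. Your plan treats a $p$-face as ``essentially a single edge joining two $3^+$-vertices with a string of $2$-vertices between them,'' which is only correct for $p$-cycles with exactly two attachment-vertices. The paper handles those by replacing the cycle with an edge (forming $G'$), but for the remaining $p$-cycles it introduces a new \emph{sticking vertex} $v_K^*$ joined to all $r\geq 3$ attachment-vertices (forming $G''$); it is this operation that turns a cluster of $p$-cycles sharing a branch vertex into an ordinary $(k_1,\dots,k_t)$-thread of $G''$, so that Claim~D and Lemma~\ref{LEM:crown-prime} (the crown-necklace case) can bound $\sum k_i$ and hence the number $t(v)$ of $2$-vertices a $3^+$-vertex must feed. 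Without the sticking construction you have no uniform structural object to discharge on, and no way to invoke the crown-necklace reducibility that the $(k_1,\ldots,k_t)$-crown-necklace lemma was designed for. A second, smaller issue is that you list ``bull-necklace'' among the reducible configurations from Section~\ref{subsectCp}; bull-necklaces appear only in the fractional-coloring section (Lemma~\ref{Fr-3-necklace}) and are not used in this theorem's proof.

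The discharging setup you describe — charges $d(v)-4$ and $\ell(f)-4$, contracting $C$ to a point — is also not the paper's, and it is not shown to close. The paper's charges are $\tfrac{p}{2}d_{G''}(v)-(p+2)$ and $d_{G''}(f)-(p+2)$, with the outer face boosted to $2p$ so it can pay $2$ to each of the $p$ precolored boundary vertices, and the verification (Claims~E and~F) leans on the precise derived bounds $t(v)\le (p-1)(d(v)-2)$ for boundary vertices and $t(v)\le pd(v)-p$ for internal ones, together with faces of length $\ge 4(p+2)$ in $G''$, a bound obtained by translating the forbidden cycle lengths of $G$ (Claim~C) rather than assumed. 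With your $(d-4,\ell-4)$ charges the face send per incidence is at most $1-4/L$, which makes the $2$-vertex and $3$-vertex balance delicate and needs an explicit computation against the thread-length bounds; you acknowledge the ``bookkeeping is delicate'' but leave it entirely to the reader, so as written the proposal is an outline rather than a proof. In short: the plan is on the right track, but the sticking construction, the derived girth bound for $G''$, and the concrete discharging rules with their verification are the content of the theorem and are all missing.
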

\begin{proof}

Suppose to the contrary that $G$ is a  counterexample with  $|E(G)\setminus E(C)|$ minimized. Clearly, we have $E(G)\setminus E(C)\neq \emptyset$ and $|V(G)|>p$.

 \begin{claimab}\label{CL:degree} 
  (i) $G$ is $2$-connected. In particular, $\delta(G)\geq 2$.

  (ii) Every $p$-cycle in  $G$ bounds a face. In particular, $C$ is a facial $p$-cycle of $G$.
  \end{claimab}

\noindent {\em Proof of Claim \ref{CL:degree}.}
(i) If $G$ is not $2$-connected, then there exist proper induced subgraphs $G_1$ and $G_2$ of $G$ and a vertex
$v\in V(G_2)$ such that $E(G) = E(G_1) \cup E(G_2)$, $V(G_1) \cap V(G_2) \subseteq  \{v\}$ and $V(C)\subseteq V(G_1)$.
By the minimality of the counterexample, $\omega$ can be extended to a $C_p$-coloring $\tilde{\omega}$ of $G_1$. Take an edge $uv\in E(G_2)$, and construct a new graph $G_2'$ from $G_2$ by adding a new $(p-2)$-thread between $u$ and $v$ to form a new $p$-cycle $C'$. Then $G_2'$ contains no cycles of length from $p+1$ to $2(p-1)(p+2)-1$. Let $\omega'$ be a precoloring of $C'$ such that $\omega'(v)=\tilde{\omega}(v)$ (if $v\notin V(G_1)$, then $G$ is not connected and we  take $\omega'(v)$ to be an arbitrary color). Since $|E(G_2')\setminus E(C')|<|E(G)\setminus E(C)|$ and by the minimality of the counterexample, $\omega'$ can be extended to a $C_p$-coloring $\tilde{\omega}'$ of $G_2'$. So $\tilde{\omega}'$ and $\tilde{\omega}$ combine to provide a
$C_p$-coloring of $G$, which is a contradiction.

(ii) Suppose for a contradiction that a $p$-cycle $K$ of $G$ does not bound a face.  Let $G_1$ be the subgraph of $G$ drawn outside (and
including) $K$, and let $G_2$ be the subgraph of $G$ drawn inside (and including) $K$. We may, without loss of generality, assume that $V(C)\subset V(G_1)$.
By the minimality of the counterexample, $\omega$ can be extended to a $C_p$-coloring $\tilde{\omega}$ of $G_1$. Let $\omega'$ be the restriction of $\tilde{\omega}$ on
$V(K)$. Then $\omega'$ can be extended to a $C_p$-coloring $\tilde{\omega}'$ of $G_2$ by the minimality of $G$.
The union of $\tilde{\omega}$ and $\tilde{\omega}'$ is a $C_p$-coloring of $G$
extending $\omega$, which is a contradiction.
\q

%
%
%

~

By Claim \ref{CL:degree}(ii), $C$ must be a
facial cycle of $G$. Re-embedding $G$ on the plane if needed, we can assume that the face bounded by $C$ is the outer face
of $G$, denoted by $f_0$.
Let $H$ be a thread, or a necklace, or a $(k_1,k_2,\ldots,k_t)$-thread, or a $(k_1,k_2,\ldots,k_t)$-necklace, or a $(k_1,k_2,\ldots,k_t)$-crown-necklace of $G$ with end vertex set $W$. If $V(H)\setminus W\subseteq V(G)\setminus V(C)$, then we say $H$ is {\em valid} in $G$. 

 \begin{claimab}\label{CL:no-thread-p}
 Each of the following holds.

  (i) $G$ contains no valid $(p-2)^+$-thread.

  (ii) $G$ contains no valid $(p-2)^+$-necklace.

  (iii) $G$ contains neither a valid $(k_1,k_2,\ldots,k_t)$-necklace  nor  a valid $(k_1,k_2,\ldots,k_t)$-crown-necklace  with $\sum_{i=1}^t k_i\ge (p-2)t-p+1$, where $t\geq 3$.
  \end{claimab}

\noindent {\em Proof of Claim \ref{CL:no-thread-p}.}
Note that an $s$-thread is a special $s$-necklace without performing $C_p$-replacement operation, and so Claim \ref{CL:no-thread-p}(i) follows from Claim \ref{CL:no-thread-p}(ii).  Suppose, for a contradiction, that $G$ has a valid $(p-2)^+$-necklace, or a valid $(k_1,k_2,\ldots,k_t)$-necklace, or  a valid $(k_1,k_2,\ldots,k_t)$-crown-necklace described above, denoted by $H$ with $W$ being its end vertex set. By the minimality of $G$, $\omega$ can be extended to a $C_p$-coloring $\tilde{\omega}$ of $G-(V(H)\setminus W)$. By Lemmas \ref{LEM:necklace-prime}(ii) and  \ref{LEM:crown-prime}, $\tilde{\omega}$ can be extended to a $C_p$-coloring of $G$, that is, $\omega$ can be extended to a $C_p$-coloring of $G$,  a contradiction.
\q

~

By Claim  \ref{CL:degree}(ii),  any $p$-cycle in $G$ is a facial $p$-cycle (the boundary of a $p$-face).
Since $G$ contains no cycles of length from $p+1$ to $2(p-1)(p+2)-1$, any two $p$-cycles have no common edges.
 A vertex $v$ of a facial $p$-cycle $K$ with $d_G(v)\geq 3$  is called an {\em attachment-vertex} of $K$. Since $G$ is $2$-connected by Claim \ref{CL:degree}(i), every $p$-cycle contains at least two attachment-vertices.

Next, we construct two graphs $G'$ and $G''$ modified from $G$ for later proof.
Let $G'$ be the graph obtained from $G$ by replacing $K$ with an edge $uv$ for any facial $p$-cycle $K$ other than $C$ with exactly two attachment-vertices $u,v$.
Note that $d_G(u,v)\geq 2$ by Claim \ref{CL:no-thread-p}(i). By construction, each edge of $G'$ is corresponding to either an edge of $G$ or a $p$-cycle consisting of a $t$-thread and a $(p-2-t)$-thread with $p-2\ge t\ge 2$. Note that the shorter one of $t$-thread and  $(p-2-t)$-thread has length at most $\frac{p-1}{2}$.


Let $K$ be a facial $p$-cycle of $G'$ other than $C$ with attachment-vertices $v_1,v_2,\ldots, v_r$. Then  $r\geq 3$ by the construction of $G'$. 
To {\bf stick} $K$, we mean to delete all the vertices of $V(K)\setminus \{v_1,v_2,\ldots, v_r\}$
 and add a new vertex $v_{K}^*$ inside face $K$ to join  each vertex of
 $\{v_1,v_2,\ldots, v_r\}$. The vertex $v_K^*$ is called a {\em sticking vertex}, where the degree of $v_{K}^*$ is at least $3$.
Let $G''$ be the graph obtained from $G'$ by sticking all the facial $p$-cycles of $G'$ except $C$.

 By the construction of $G''$, we immediately  observe the following: $G''$ is a plane graph with outer face $f_0$ bounded by $C$; the minimal degree $\delta(G'')\geq 2$; for any $uv\in E(G'')$, at most one vertex of $\{u,v\}$ is  in $V(G'')\setminus V(G)$; each $2$-vertex $v$ of $G''$  is either a $2$-vertex of $G$ or an attachment-vertex of a facial $p$-cycle of $G$; each vertex $v\in V(G'')\setminus V(G)$ is a sticking vertex with $d_{G''}(v)\geq 3$. These facts will be used implicitly in the rest of the proof.

 We further obtain the claim below concerning cycles of $G''$.
 \begin{claimab}\label{CL:G''girth4p}
   The new constructed graph $G''$ is a plane graph of girth $p$ without cycles of length from $p+1$ to $4(p+2)-1$. Furthermore, $C$ is the  only one $p$-cycle of $G''$.
  \end{claimab}
  \noindent {\em Proof of Claim \ref{CL:G''girth4p}.}
  Recall that each $p$-cycle in $G$ is a facial $p$-cycle by Claim  \ref{CL:degree}(ii). By the construction of $G''$, $C$ is the only one $p$-cycle of $G''$.
  Let $Q=x_0x_1\cdots x_mx_{0}$ be a cycle of $G''$ other than $C$. If $x_i$ is a sticking vertex, then $x_i$ corresponds to a facial $p$-cycle $K_i$ of $G$, and $x_{i-1}$ and $x_{i+1}$ are two attachment-vertices of $K_i$, thus the two edges $x_{i-1}x_i, x_ix_{i+1}$ together correspond to a segment of $K_i$ whose length is at most $p-2$ as $K_i$ has at least three attachment-vertices.  If both $x_j$ and $x_{j+1}$ are not sticking vertices, then $x_jx_{j+1}$ corresponds to either an edge of $G$ or a $p$-cycle of $G$ consisting of two threads, where the shorter one has length at most $\frac{p-1}{2}$. Hence the cycle $Q$ corresponds to a cycle of length at most $\frac{p-1}{2}m$ in $G$. So we have $\frac{p-1}{2}m\ge  2(p-1)(p+2)$, which gives $m\ge 4(p+2)$.
  \q

~
%

In the graph $G''$, a thread or a $(k_1,k_2,\ldots,k_t)$-thread $H$ with end vertex set $W$  is called {\em valid} if $V(H)\setminus W\subseteq V(G'')\setminus V(C)$.
 \begin{claimab}\label{CL:thread}
  Each valid $(s+2)$-thread  of $G''$ corresponds to a valid  $s$-necklace  of $G$. In particular, $G''$ contains no valid $p^+$-thread  by Claim \ref{CL:no-thread-p}(ii).
  \end{claimab}

\noindent {\em Proof of Claim \ref{CL:thread}.} Let $P=x_0x_1\cdots x_{s+2}x_{s+3}$ be a valid $(s+2)$-thread of $G''$. For any $i\in [s+2]$, $x_i\in V(G'')\setminus V(C)$ by definition.
Noting that $x_i$ is a $2$-vertex  and by the construction of $G''$, we have $x_i\in V(G)$. Hence for each $i\in [s+1]$, the edge $x_ix_{i+1}$ in $G''$ corresponds to either an edge of $G-V(C)$ or a $p$-cycle of $G-V(C)$ consisting of two threads by its construction. Therefore, $x_1x_2\cdots x_{s+2}$ corresponds to a valid $s$-necklace  of $G$. In particular, a valid $p^+$-thread  of $G''$ corresponds to a valid $(p-2)^+$-necklace  of $G$, and thus $G''$ contains no valid $p^+$-thread by Claim \ref{CL:no-thread-p}(ii).
\q

\begin{claimab}\label{CL:t-thread}
  $G''$ contains no valid $(k_1,k_2,\ldots,k_t)$-thread  with $\sum_{i=1}^{t} k_i\ge pt-p+1$ and $t\geq 3$.
  \end{claimab}

\noindent {\em Proof of Claim \ref{CL:t-thread}.}
Suppose to the contrary that $G''$ has a valid $(k_1,k_2,\ldots,k_t)$-thread $T_x$ such that $\sum_{i=1}^{t} k_i\ge pt-p+1$ and $t=d_{G''}(x)\ge 3$. For each $i\in [t]$, let $x_i$ be the end vertex (other than $x$) of the $k_i$-thread in the $(k_1,k_2,\ldots,k_t)$-thread, let $y_i$ be the neighbor of $x_i$ on the $k_i$-thread, and let $z_i$ be the neighbor of $x$ on the $k_i$-thread. Then $V(T_x)\setminus\{y_1,y_2,\ldots,y_t\}\subseteq V(G'')\setminus V(C)$.
By Claim \ref{CL:thread}, we have $k_i\leq p-1$ for any $i\in [t]$. Note that for each $i\in [t]$, $y_i$ is not a sticking vertex since it is a $2$-vertex in $G''$. If $x$ is a vertex in $V(G)$, then the $(x,y_i)$-thread from $x$ to $y_i$ corresponds to a $(k_i-1)$-necklace $N(x,y_i)$ in $G-V(C)$ for each $i\in [t]$. Hence $G$ contains a valid $(k_1-1,k_2-1,\ldots,k_t-1)$-necklace $N_x$  with end vertices ${y_1,y_2,\ldots, y_t}$. Since $\sum_{i=1}^{t} k_i\ge pt-p+1$, we have $\sum_{i=1}^{t} (k_i-1)\ge(p-1)t-p+1$, a contradiction to Claim \ref{CL:no-thread-p}(iii).

Assume instead that $x$ is not a vertex in $V(G)$. Then $x$ is a sticking vertex in $G''$, which corresponds to a $p$-cycle $K_x$ in $G-V(C)$. Thus,  $z_i$ is an attachment-vertex of $K_x$ for each $i\in[t]$. Hence the $(z_i,y_i)$-thread  corresponds to a $(k_i-2)$-necklace $N(z_i,y_i)$ in $G-V(C)$ for each $i\in [t]$. Thus $G$ contains a valid $(k_1-2,k_2-2,\ldots,k_t-2)$-crown-necklace  with end vertices ${y_1,y_2,\ldots, y_t}$. Similarly, we have $\sum_{i=1}^{t} (k_i-2)\ge (p-2)t-p+1$ by $\sum_{i=1}^{t} k_i\ge pt-p+1$, which contradicts to Claim \ref{CL:no-thread-p}(iii) again.
\q

~

Now we shall complete the proof  by a discharging method on $G''$. Any face other than $f_0$ is called an {\em internal face} of $G''$. The vertices
of $V(G'')\setminus V(C)$ are called {\em internal vertices} of $G''$. The degree $d_{G''}(f)$ of a face $f$ is the number of
edges in its boundary, cut edges being counted twice.
Let $F(G'')$ be the set of faces of $G''$. From Euler Formula, we have
\begin{equation*}
    \sum_{v\in V(G'')}(\frac{p}{2}d_{G''}(v)-(p+2))+\sum_{f\in F(G'')}(d_{G''}(f)-(p+2))=-2(p+2),
    \end{equation*}
    which implies
    \begin{equation}\label{EQ:sum}
    \sum_{v\in V(G'')}(\frac{p}{2}d_{G''}(v)-(p+2))+(d_{G''}(f_0)+p)+\sum_{f\in F(G'')\setminus\{f_0\}}(d_{G''}(f)-(p+2)) =-2.
    \end{equation}

    Assign an initial charge $ch_0(v)=\frac{p}{2}d_{G''}(v)-(p+2)$ for each $v\in V(G'')$,  $ch_0(f_0)=2p$
    and $ch_0(f)=d_{G''}(f)-(p+2)$ for each $f\in F(G'')\setminus \{f_0\}$. Hence the total charge is $-2$ by Eq. (\ref{EQ:sum}).

~

    We redistribute the charges according to the following rules.
    {\em

    \noindent{\bf (RI)} Every $(4p+8)^+$-face of $G''$ gives charge $\frac{3}{4}$ to each of its incident internal vertices.

    \noindent{\bf (RII)} Every $3^+$-vertex of $G''$ gives charge $\frac{1}{4}$ to each of its weakly adjacent internal $2$-vertices.

    \noindent{\bf (RIII)} The outer face $f_0$ gives charge $2$ to each of its incident vertices.
    }

~

Let $ch$ denote the charge assignment after performing the charge redistribution
using the rules (RI), (RII), and (RIII).

  \begin{claimab}\label{CL:charge-face}
$ch(f)\geq 0$ for each $f\in F(G'')$.
  \end{claimab}

\noindent {\em Proof of Claim \ref{CL:charge-face}.} By Claim \ref{CL:G''girth4p}, $G''$ is a plane graph of girth $p$ without cycles of length from $p+1$ to $4(p+2)-1$. If $d_{G''}(f)=p$, then $f$ must be the outer face $f_0$, and thus $ch_0(f_0)=2p$. By (RIII), $f$ sends charge $2$ to each of its incident vertices,  and hence $ch(f)=ch_0(f_0)-2p=0$.
Now assume $d_{G''}(f)\geq 4(p+2)$.  Then $f$ sends charge $\frac{3}{4}$ to each incident internal vertices by (RI), and so $ch(f)\geq ch_0(f)-\frac{3}{4}d_{G''}(f)= (d_{G''}(f)-(p+2))-\frac{3}{4}d_{G''}(f)=\frac{1}{4}(d_{G''}(f)-4(p+2))\geq 0$. \q

 \begin{claimab}\label{CL:charge-vertex}
$ch(v)\geq 0$ for each $v\in V(G'')$.
  \end{claimab}

\noindent {\em Proof of Claim \ref{CL:charge-vertex}.}
First we assume $d_{G''}(v)=2$. Then $ch_0(v)=-2$. If $v\in V(C)$, then $v$ receives charge 2 from $f_0$ by (RIII). Thus $ch(v)=ch_0(v)+2=0$. For  an internal $2$-vertex $v$,
by Claims \ref{CL:degree} and \ref{CL:thread}, $v$ is weakly adjacent to two $3^+$-vertices, and thus $v$ receives charge $\frac{1}{4}\times 2$ by (RII).
By (RI), $v$ receives charge $\frac{3}{4}\times 2$ from its incident faces. Hence
$ch(v)=-2+\frac{1}{2}+\frac{3}{2}=0$.

Now we assume $d_{G''}(v)\geq 3$. Let $t(v)$ be the number of internal $2$-vertices weakly adjacent to $v$. By (RII), $v$ sends charge $\frac{1}{4} t(v)$ to its weakly adjacent internal $2$-vertices. If $v\in V(C)$, then $t(v)\leq (p-1)(d_{G''}(v)-2)$ as each
thread in $G''$ contains at most $(p-1)$ internal 2-vertices
by Claim \ref{CL:thread}. Note that $v$ receives charge $\frac{3}{4} (d_{G''}(v)-1)$ from its incident $(4p+8)^+$-faces by (RI), and receives charge 2 from $f_0$ by (RIII).
Then
 \begin{eqnarray*}
  ch(v)&=&ch_0(v)-\frac{1}{4} t(v)+\frac{3}{4} (d_{G''}(v)-1)+2\\
  &\geq &(\frac{p}{2}d_{G''}(v)-(p+2))-\frac{1}{4}(p-1)(d_{G''}(v)-2)+\frac{3}{4} (d_{G''}(v)-1)+2\\
  &=& \frac{p+4}{4}d_{G''}(v)-\frac{1}{2}p-\frac{5}{4}\\
  &\geq&  \frac{p+4}{4}\cdot 3-\frac{1}{2}p-\frac{5}{4}\\
  &=&\frac{p+7}{4}>0.
\end{eqnarray*}

\noindent  Assume instead that $v$ is an internal vertex.
By Claims \ref{CL:thread} and \ref{CL:t-thread},  $t(v)\leq pd_{G''}(v)-p$. By (RI), $v$ receives charge $\frac{3}{4} d_{G''}(v)$ from its incident faces.
Hence
\begin{eqnarray*}
  ch(v)&=&(\frac{p}{2}d_{G''}(v)-(p+2))+\frac{3}{4} d_{G''}(v)-\frac{1}{4} t(v)\\
  &\geq &\frac{p}{2}d_{G''}(v)-p-2+\frac{3}{4} d_{G''}(v)-\frac{1}{4} (pd_{G''}(v)-p)\\
  &=& \frac{p+3}{4}d_{G''}(v)- \frac{3}{4}p-2\\
  &\ge& \frac{p+3}{4}\cdot 3- \frac{3}{4}p-2\\
  &=&\frac{1}{4}>0.
\end{eqnarray*}
\q

By Eq. (\ref{EQ:sum}) and Claims \ref{CL:charge-face} and \ref{CL:charge-vertex},
we have
$$-2= \sum_{v\in V(G'')}ch_0(v)+\sum_{f\in F(G'')}ch_0(f)=\sum_{v\in V(G'')}ch(v)+\sum_{f\in F(G'')}ch(f)\geq 0,$$
a contradiction. This contradiction completes the proof of Theorem \ref{THM-main1-p}.
\end{proof}

\section{The fractional coloring}\label{sect4}


This section is devoted to prove Theorem \ref{THM:main-fraction-k}. We first study some  graphs with precoloring extensions  in Subsection \ref{subsect-precoloring}, serving for reducible configurations, and then present the proof of Theorem \ref{THM:main-fraction-k} in Subsection \ref{subsect-pf1.6} by a discharging method.

\subsection{Precoloring graphs for fractional $(k:\frac{k-1}{2})$-coloring}
\label{subsect-precoloring}



We start with the following property on coloring of paths.

\begin{lemma}\label{LEM:path-p}
Let $P=v_1v_2\ldots v_t$ be a path with $2\leq t\leq k$, and let $\varphi$ be a fractional $(k:\frac{k-1}{2})$-coloring of $P$. Then $|\varphi(v_1)\cap \varphi(v_t)|\geq \frac{k-t}{2}$ if $t$ is odd, and $|\varphi(v_1)\cap \varphi(v_t)|\leq \frac{t-2}{2}$ if $t$ is even.
\end{lemma}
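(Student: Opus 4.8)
The plan is to track how the fractional colors propagate along the path $P$, measuring at each step the size of the intersection $|\varphi(v_1)\cap \varphi(v_i)|$. Since each $\varphi(v_i)$ is a $\frac{k-1}{2}$-subset of $[k]$, its complement $\overline{\varphi(v_i)}:=[k]\setminus\varphi(v_i)$ has size $\frac{k+1}{2}$, and the edge condition $\varphi(v_i)\cap\varphi(v_{i+1})=\emptyset$ says precisely that $\varphi(v_{i+1})\subseteq \overline{\varphi(v_i)}$; as both sets have size $\frac{k-1}{2}$ inside a ground set of size $k$, this forces $\varphi(v_{i+1})$ and $\overline{\varphi(v_i)}$ to overlap in at least $\frac{k-1}{2}-1=\frac{k-3}{2}$ elements and, dually, $\overline{\varphi(v_{i+1})}$ misses $\varphi(v_i)$ in at most one element. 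First I would set $a_i = |\varphi(v_1)\cap\varphi(v_i)|$ and establish the one-step recursion: $|a_{i+1}-(\tfrac{k-1}{2}-a_i)|\le$ (small error), i.e.\ roughly $a_{i+1}\approx \frac{k-1}{2}-a_i$, with a precise inequality obtained by inclusion--exclusion inside the ground set $[k]$. Iterating this ``flip'' relation $i-1$ times starting from $a_1=\frac{k-1}{2}$ yields an alternating behavior whose parity matches the claimed parity split ($t$ odd versus $t$ even).

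More concretely, for the odd case I would prove by induction on odd $i$ that $|\varphi(v_1)\cap\varphi(v_i)|\ge \frac{k-i}{2}$: the base case $i=1$ gives $\frac{k-1}{2}=\frac{k-1}{2}$, and the inductive step goes from $v_i$ to $v_{i+2}$ using the two edges $v_iv_{i+1}$ and $v_{i+1}v_{i+2}$. The key elementary fact is a counting bound: if $X,Y,Z$ are $\frac{k-1}{2}$-subsets of $[k]$ with $X\cap Y=Y\cap Z=\emptyset$, then $|X\cap Z|\ge |X|+|Z|-|\,[k]\setminus Y\,| = \frac{k-1}{2}+\frac{k-1}{2}-\frac{k+1}{2}=\frac{k-3}{2}$, so passing from $v_i$ to $v_{i+2}$ loses at most $1$ from the intersection size, giving $a_{i+2}\ge a_i-1$ and hence $a_i\ge \frac{k-1}{2}-\frac{i-1}{2}=\frac{k-i}{2}$. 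For the even case I would instead bound $a_i$ from above: for even $i$, write $a_i=|\varphi(v_1)\cap\varphi(v_i)|$ and use $\varphi(v_i)\subseteq [k]\setminus\varphi(v_{i-1})$ together with the odd-case lower bound $|\varphi(v_1)\cap\varphi(v_{i-1})|\ge\frac{k-(i-1)}{2}$ to get $a_i\le |\varphi(v_1)|-|\varphi(v_1)\cap\varphi(v_{i-1})|\le \frac{k-1}{2}-\frac{k-i+1}{2}=\frac{i-2}{2}$, which is exactly the claimed bound.

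The main obstacle — really the only thing requiring care — is getting the counting inequalities exactly right and confirming that the parity bookkeeping lines up, since the statement is sharp (the bounds $\frac{k-t}{2}$ and $\frac{t-2}{2}$ must be attained for odd/even $t$ respectively, which one can check on the ``staircase'' colorings coming from the canonical $C_k$-coloring of a path). One should also double-check the edge cases: $t=2$ in the even case gives $|\varphi(v_1)\cap\varphi(v_t)|\le 0$, which is just the edge constraint, and the odd-case bound is only asserted for $t\le k$ so that $\frac{k-t}{2}\ge 0$. I do not anticipate needing anything beyond elementary set counting — no appeal to Cauchy--Davenport is required here, in contrast with the circular-coloring lemmas — so the write-up should be short once the recursion and its parity structure are stated cleanly.
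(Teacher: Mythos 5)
Your proof is correct and uses essentially the same ingredients as the paper's: an elementary inclusion--exclusion induction along the path, with the even case derived from the odd case exactly as in the paper. The one organizational difference is the odd case, where you run a step-of-two induction ($a_{i+2}\ge a_i-1$, since $\varphi(v_i)$ and $\varphi(v_{i+2})$ both sit inside the $\frac{k+1}{2}$-set $[k]\setminus\varphi(v_{i+1})$ and hence $|\varphi(v_i)\setminus\varphi(v_{i+2})|\le 1$), whereas the paper alternates parity and derives the odd bound for $t$ from the even bound for $t-1$. Your variant decouples the two parities and is arguably a touch cleaner; just be sure, when writing it up, to make the inference from $|\varphi(v_i)\cap\varphi(v_{i+2})|\ge\frac{k-3}{2}$ to $a_{i+2}\ge a_i-1$ explicit, since it passes through $|\varphi(v_1)\cap\varphi(v_i)\cap\varphi(v_{i+2})|\ge a_i-|\varphi(v_i)\setminus\varphi(v_{i+2})|$.
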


\begin{proof}
We prove by induction. Since $\varphi$ is a fractional $(k:\frac{k-1}{2})$-coloring of $P$,
we have $\varphi(v_i)\cap \varphi(v_{i+1})=\emptyset$ for each $i\in [t-1]$. Thus Lemma \ref{LEM:path-p} holds for $t=2$.
If $t=3$, noting that $(\varphi(v_1)\cup \varphi(v_3))\subseteq [k]\setminus \varphi(v_2)$, then
$|\varphi(v_1)\cup \varphi(v_3)|\leq k-|\varphi(v_2)|$, and thus
 $|\varphi(v_1)\cap \varphi(v_3)|=|\varphi(v_1)|+|\varphi(v_3)|-|\varphi(v_1)\cup \varphi(v_3)|\ge \frac{k-1}{2}+\frac{k-1}{2}-(k-\frac{k-1}{2})=\frac{k-3}{2}$. That is, Lemma \ref{LEM:path-p} holds for $t=3$. Assume Lemma \ref{LEM:path-p} holds for any
value smaller than $t$. Now we consider $\varphi(v_1)\cap \varphi(v_t)$. First we assume $t$ is even. Then $t-1$ is odd, and $|\varphi(v_1)\cap \varphi(v_{t-1})|\geq \frac{k-t+1}{2}$ by induction hypothesis. Since $\varphi(v_t)\cap \varphi(v_{t-1})=\emptyset$, we have $\varphi(v_1)\cap \varphi(v_{t})\subseteq \varphi(v_1)\setminus \varphi(v_{t-1})$, and thus $|\varphi(v_1)\cap \varphi(v_{t})|\leq |\varphi(v_1)|-|\varphi(v_1)\cap \varphi(v_{t-1})|\leq \frac{k-1}{2}-\frac{k-t+1}{2}=\frac{t-2}{2}$. Now we assume $t$ is odd. Then $t-1$ is even, and $|\varphi(v_1)\cap \varphi(v_{t-1})|\leq \frac{t-3}{2}$ by induction hypothesis. As $\varphi(v_t)\cap \varphi(v_{t-1})=\emptyset$, we have $(\varphi(v_1)\cup \varphi(v_{t}))\subseteq [k]\setminus (\varphi(v_{t-1})\setminus\varphi(v_1))$, which implies $|\varphi(v_1)\cup \varphi(v_{t})|\leq k-|\varphi(v_{t-1})|+|\varphi(v_1)\cap \varphi(v_{t-1})|$. Thus
 $|\varphi(v_1)\cap \varphi(v_{t})|=|\varphi(v_1)|+|\varphi(v_{t})|-
|\varphi(v_1)\cup \varphi(v_{t})|\geq\frac{k-1}{2}+\frac{k-1}{2}-k+\frac{k-1}{2}-|\varphi(v_1)\cap \varphi(v_{t-1})|\geq \frac{k-1}{2}+\frac{k-1}{2}-k+\frac{k-1}{2}-\frac{t-3}{2}=\frac{k-t}{2}$.
Therefore, Lemma \ref{LEM:path-p} holds by induction.
\end{proof}

Recall that, for $S\subset V(H)$,   $H$ is {\em $\varphi_S$-colorable} if the precoloring $\varphi$ of $S$ can be extended to  a fractional $(k:\frac{k-1}{2})$-coloring of $H$.
 Note that the number $\frac{k-2}{4}+(-1)^{d(x,y)}\cdot\frac{k-2d(x,y)}{4}$ is always an integer; in fact it is $\frac{k-1-d(x,y)}{2}$ if $d(x,y)$ is even, and $\frac{d(x,y)-1}{2}$ if $d(x,y)$ is  odd.

\begin{lemma}\label{LEM:cyclep}
  Let $C$ be a cycle of length $k$. Let $\varphi$ be a precoloring of $\{x,y\}\subseteq V(C)$. Then $C$ is $\varphi_{\{x,y\}}$-colorable if and only if
  $$ |\varphi(x)\cap\varphi(y)|=\frac{k-2}{4}+(-1)^{d(x,y)}\cdot\frac{k-2d(x,y)}{4}.$$
\end{lemma}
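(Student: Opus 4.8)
The plan is to cut the cycle at $x$ and $y$ and reduce everything to a precoloring-extension statement for paths whose two endpoints are precolored. We may assume $x\neq y$, since if $x=y$ then both sides of the asserted identity equal $\frac{k-1}{2}$ and a cycle with a single precolored vertex is trivially colorable. Write $d=d(x,y)$, so $1\le d\le\frac{k-1}{2}$. Deleting $x$ and $y$ splits $C$ into two internally disjoint $x$--$y$ paths, $P_1$ of length $d$ and $P_2$ of length $k-d$; since $k$ is odd, exactly one of $d,k-d$ is odd. A fractional $(k:\frac{k-1}{2})$-coloring of $C$ extending $\varphi$ is precisely a pair of such colorings of $P_1$ and $P_2$ that restrict to $\varphi$ on $\{x,y\}$, and since $P_1,P_2$ share only the already colored vertices $x,y$, these glue freely. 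Hence $C$ is $\varphi_{\{x,y\}}$-colorable if and only if both $P_1$ and $P_2$ admit a fractional $(k:\frac{k-1}{2})$-coloring extending $\varphi|_{\{x,y\}}$.

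The heart of the matter is therefore a companion to Lemma~\ref{LEM:path-p}, which I would state and prove first: for a path $v_1v_2\cdots v_t$ with $2\le t\le k$ whose endpoints are assigned $\frac{k-1}{2}$-subsets of $[k]$, the precoloring extends to the whole path if and only if $|\varphi(v_1)\cap\varphi(v_t)|\ge\frac{k-t}{2}$ when $t$ is odd and $|\varphi(v_1)\cap\varphi(v_t)|\le\frac{t-2}{2}$ when $t$ is even. The ``only if'' direction is exactly Lemma~\ref{LEM:path-p}. For ``if'' I would induct on $t$, the case $t=2$ being immediate. For $t\ge 3$, peel off $v_1$: it suffices to find a $\frac{k-1}{2}$-subset $S$ of $[k]\setminus\varphi(v_1)$ to assign to $v_2$ such that $|S\cap\varphi(v_t)|$ satisfies the condition for the shorter path $v_2\cdots v_t$ (on $t-1$ vertices), after which that path is colorable by the induction hypothesis. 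Since $[k]\setminus\varphi(v_1)$ has $\frac{k+1}{2}$ elements, $S$ omits exactly one of them, so $|S\cap\varphi(v_t)|$ can be taken equal to $\frac{k-1}{2}-|\varphi(v_1)\cap\varphi(v_t)|$ always, and equal to one less whenever $\varphi(v_1)\neq\varphi(v_t)$. A short computation from the hypothesis on $|\varphi(v_1)\cap\varphi(v_t)|$ (using $t\ge 3$ to absorb the degenerate case $\varphi(v_1)=\varphi(v_t)$) shows one of these two attainable values lies in the range needed for length $t-1$, which completes the induction.

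With this sub-lemma in hand the rest is parity bookkeeping. Applying it to $P_1$ (which has $d+1$ vertices) and to $P_2$ (which has $k-d+1$ vertices), and using that $d$ and $k-d$ have opposite parities, one arc forces a lower bound on $|\varphi(x)\cap\varphi(y)|$ and the other an upper bound, and the two bounds coincide: if $d$ is even the two arcs together force $|\varphi(x)\cap\varphi(y)|=\frac{k-d-1}{2}$, and if $d$ is odd they force $|\varphi(x)\cap\varphi(y)|=\frac{d-1}{2}$. In either case the forced value equals $\frac{k-2}{4}+(-1)^{d}\cdot\frac{k-2d}{4}$, and conversely when $|\varphi(x)\cap\varphi(y)|$ equals this number the extension condition of the sub-lemma holds for each of $P_1$ and $P_2$. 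Therefore $C$ is $\varphi_{\{x,y\}}$-colorable exactly when $|\varphi(x)\cap\varphi(y)|=\frac{k-2}{4}+(-1)^{d(x,y)}\cdot\frac{k-2d(x,y)}{4}$, as claimed.

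The only step with genuine content is the sufficiency half of the path sub-lemma; the necessity half is handed to us by Lemma~\ref{LEM:path-p}, and the cut-and-glue reduction together with the final parity computation is routine. I expect the mild technical nuisance there to be keeping the two attainable values of $|S\cap\varphi(v_t)|$ straight and dispatching the boundary case $\varphi(v_1)=\varphi(v_t)$, but this is elementary and I do not anticipate a real obstacle.
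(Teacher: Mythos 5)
Your proof is correct, and in the sufficiency direction it takes a genuinely different route from the paper. The necessity direction is the same in both: cut $C$ at $x,y$ into the two arcs $P_1,P_2$ (of $d+1$ and $k-d+1$ vertices, one odd and one even), apply Lemma~\ref{LEM:path-p} to each, and observe that the resulting lower and upper bounds on $|\varphi(x)\cap\varphi(y)|$ coincide with the displayed value. For sufficiency, the paper normalizes $\varphi(x_0)$ and $\varphi(x_t)$ to a canonical form by permuting colors and then writes down an explicit coloring $\varphi(x_{2i})=\{1,\ldots,\frac{k-1}{2}-i\}\cup\{k-i+1,\ldots,k\}$, $\varphi(x_{2i+1})=\{\frac{k+1}{2}-i,\ldots,k-i-1\}$, whose verification is ``routine.'' You instead upgrade Lemma~\ref{LEM:path-p} to an if-and-only-if statement for paths and prove the converse by a peeling induction: given $\varphi(v_1)$, choose $\varphi(v_2)\subseteq[k]\setminus\varphi(v_1)$ whose intersection size with $\varphi(v_t)$ lands in the admissible window for the shorter path, which is possible since the two realizable values $m=\frac{k-1}{2}-|\varphi(v_1)\cap\varphi(v_t)|$ and $m-1$ (the latter available unless $\varphi(v_1)=\varphi(v_t)$) always hit the required range. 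Your route is longer but coordinate-free --- it avoids the color normalization and the slightly opaque closed-form formula --- and the iff version of the path lemma is a reusable structural statement in its own right. The paper's route is shorter on the page at the cost of an unverified ``routine to check.'' Both are valid; your modularization is arguably the cleaner argument, though one should be sure the base case $t=2$ and the degenerate possibility $\varphi(v_1)=\varphi(v_t)$ are dispatched carefully, which you do.
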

\begin{proof}
  Denote $C=x_0x_1\ldots x_{k-1}x_0$, where $x_0=x, x_t=y$, and $d(x,y)=t\le \frac{k-1}{2}$.

Assume that $C$ is  $\varphi_{\{x,y\}}$-colorable, and let $\tilde{\varphi}$ be a fractional $(k:\frac{k-1}{2})$-coloring of $C$ extended by $\varphi$. Denote $P_1=x_0x_1\ldots x_t$ and $P_2=x_0x_{k-1}x_{k-2}\ldots x_t$. Then $P_1$ is a path of order $t+1$ and $P_2$ is a path of order $k-t+1$. Note that $\tilde{\varphi}$  also provides a fractional $(k:\frac{k-1}{2})$-coloring of $P_1$ and of $P_2$.
If $t$ is even, then by Lemma \ref{LEM:path-p}, we have
$|\tilde{\varphi}(x_0)\cap \tilde{\varphi}(x_t)|\geq \frac{k-t-1}{2}$ as $|V(P_1)|=t+1$ is odd and $|\tilde{\varphi}(x_0)\cap \tilde{\varphi}(x_t)|\leq \frac{k-t-1}{2}$ as $|V(P_2)|=k-t+1$ is even. Thus $|\varphi(x_0)\cap \varphi(x_t)|=|\tilde{\varphi}(x_0)\cap \tilde{\varphi}(x_t)|= \frac{k-t-1}{2}$.
If $t$ is odd, then by Lemma \ref{LEM:path-p}, we have
$|\tilde{\varphi}(x_0)\cap \tilde{\varphi}(x_t)|\leq \frac{t-1}{2}$ as $|V(P_1)|=t+1$ is even and $|\tilde{\varphi}(x_0)\cap \tilde{\varphi}(x_t)|\geq \frac{t-1}{2}$ as $|V(P_2)|=k-t+1$ is odd. Hence $|\varphi(x_0)\cap \varphi(x_t)|= |\tilde{\varphi}(x_0)\cap \tilde{\varphi}(x_t)|=\frac{t-1}{2}$.


  Conversely, assume that $a=|\varphi(x)\cap\varphi(y)|=\frac{k-2}{4}+(-1)^{t}\cdot\frac{k-2t}{4}.$ Without loss of generality, we may assume $\varphi(x_0)=\{1,2,\ldots, \frac{k-1}{2}\}$. If $t$ is even, we assume $\varphi(x_0)\cap \varphi(x_t)=\{1,2,\ldots, a\}$, and $\varphi(x_t)\setminus \varphi(x_0)=\{\frac{k+1}{2}+a+1,\frac{k+1}{2}+a+2,\ldots, k\}$. If $t$ is odd, we assume $\varphi(x_0)\cap \varphi(x_t)=\{\frac{k-1}{2}-a+1,\frac{k-1}{2}-a+2,\frac{k-1}{2}\}$, and $\varphi(x_t)\setminus \varphi(x_0)=\{\frac{k-1}{2}+1,\frac{k-1}{2}+2,\ldots, k-a-1\}$.
  We define a coloring by setting $\varphi(x_{2i})=\{1,2,\ldots, \frac{k-1}{2}-i\}\cup\{k-i+1,k-i+2,\ldots, k\}$ and $\varphi(x_{2i+1})=\{\frac{k+1}{2}-i,\frac{k+1}{2}-i+1,\ldots, k-i-1\}$ for $0\leq i\leq \frac{k-1}{2}$. It is routine to check that $\varphi$ is a fractional $(k:\frac{k-1}{2})$-coloring of $C$.
  \end{proof}


\begin{lemma}\label{LEM:necklace-t-small}
  Let $N(x,y)$ be a necklace with a precoloring $\varphi$ of $\{x,y\}$. Suppose that the distance between $x$ and $y$ is $d(x,y)=t\le \frac{k+1}{2}$. If $$|\varphi(x)\cap\varphi(y)|=\frac{k-2}{4}+(-1)^t\cdot\frac{k-2t}{4},$$
  then $N(x,y)$ is $\varphi_{\{x,y\}}$-colorable.
\end{lemma}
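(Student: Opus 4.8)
The plan is to reduce everything to producing one explicit \emph{interval coloring} of $N(x,y)$: a coloring in which every vertex receives a set of $\frac{k-1}{2}$ cyclically consecutive elements of $[k]\cong\mathbb{Z}_k$. Such a coloring is automatically a fractional $(k:\frac{k-1}{2})$-coloring, since two cyclic intervals of length $\frac{k-1}{2}$ in $\mathbb{Z}_k$ are disjoint exactly when their starting points differ by $\frac{k-1}{2}$ or $\frac{k+1}{2}$. The data of an interval coloring is thus a \emph{phase} $\theta(v)\in\mathbb{Z}_k$ for each vertex $v$ (the start of its interval), subject only to $\theta(u)-\theta(v)\in\{\frac{k-1}{2},\frac{k+1}{2}\}$ for every edge $uv$; equivalently, as $2$ is invertible modulo $k$, to $2\theta(u)-2\theta(v)\equiv\pm 1\pmod k$.

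First I would normalize the precoloring. Write $A=\varphi(x)\cap\varphi(y)$, $B=\varphi(x)\setminus\varphi(y)$, $C=\varphi(y)\setminus\varphi(x)$, $D=[k]\setminus(\varphi(x)\cup\varphi(y))$, and $\delta=\frac{k-1}{2}-|A|$. The hypothesis $|A|=\frac{k-2}{4}+(-1)^t\frac{k-2t}{4}$, together with $t\le\frac{k+1}{2}$, gives $0\le|A|\le\frac{k-1}{2}$, and a short count gives $|B|=|C|=\delta$ and $|D|=|A|+1\ge 1$. Using that slack I can apply a permutation of the color set placing the parts around $\mathbb{Z}_k$ in cyclic order $B,A,C,D$, so that $\varphi(x)=B\cup A$ and $\varphi(y)=A\cup C$ become intervals; moreover the arrangement can be chosen so that $\varphi(x)$ starts at $a$ and $\varphi(y)$ starts at $b$ with $b-a\equiv t\cdot 2^{-1}\pmod k$ (indeed $t\cdot 2^{-1}$ has cyclic distance $\delta$ from $0$, which is exactly the distance between the starts of two intervals with $|A|=\frac{k-1}{2}-\delta$). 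So henceforth $\varphi(x),\varphi(y)$ are intervals with these starts.

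Next I would use the block structure of the necklace. By definition $N(x,y)$ is a chain of blocks $B_1,\ldots,B_m$ meeting in cut vertices $x=u_0,u_1,\ldots,u_m=y$, where each $B_i$ is either a path of some length $\ell_i\ge 1$ (a run of unreplaced thread-edges), or a $k$-cycle in which $u_{i-1}$ and $u_i$ are joined by arcs of lengths $d_i$ and $k-d_i$, in which case set $\ell_i=\min\{d_i,k-d_i\}\le\frac{k-1}{2}$. Since the blocks are in series, $d(x,y)=\sum_{i=1}^m\ell_i=t$. I then define phases: $\theta(u_i)=a+(\ell_1+\cdots+\ell_i)\cdot 2^{-1}$, so $\theta(u_0)=a$ and $\theta(u_m)=a+t\cdot 2^{-1}=b$ matches the prescribed start of $\varphi(y)$; along each edge of an edge-path block I increase the phase by $2^{-1}$; and inside a cycle block $B_i$ with vertices $v_0=u_{i-1},v_1,\ldots,v_{k-1}$ and $v_{d_i}=u_i$ I set $\theta(v_j)=\theta(u_{i-1})+\epsilon_i j\cdot 2^{-1}$ with $\epsilon_i\in\{+1,-1\}$ chosen so that $\epsilon_i d_i\equiv\ell_i\pmod k$ (possible as $d_i\equiv\pm\ell_i$), which is consistent all the way around the cycle and makes $\theta(v_{d_i})=\theta(u_i)$ agree with the value already fixed. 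Every edge of $N(x,y)$ now joins vertices whose phases differ by $\pm 2^{-1}\in\{\frac{k-1}{2},\frac{k+1}{2}\}$, so the associated interval coloring is a fractional $(k:\frac{k-1}{2})$-coloring extending $\varphi$; undoing the normalizing permutation finishes the proof.

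The main obstacle is the normalization step: verifying the size identities $|B|=|C|=\delta$ and $|D|=|A|+1$, checking that a cyclic arrangement simultaneously realizing $\varphi(x)$ and $\varphi(y)$ as intervals with $b-a\equiv t\cdot 2^{-1}$ really exists, and confirming the congruence $\sum_i\ell_i\cdot 2^{-1}\equiv b-a\pmod k$ so that the coloring closes up correctly at $y$ — this is exactly where the hypothesis on $|\varphi(x)\cap\varphi(y)|$ (and the bound on $t$) enters. The remaining ingredients — the block decomposition of a necklace and the equivalence between interval colorings and phase assignments — are routine bookkeeping. (Alternatively, one can induct on the number of blocks, peeling off $B_m$ and invoking Lemma~\ref{LEM:cyclep}, but that route also needs an auxiliary feasibility statement producing a $\frac{k-1}{2}$-set at $u_{m-1}$ with two prescribed intersection sizes, which the interval construction sidesteps.)
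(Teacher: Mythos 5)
Your argument is correct, and it takes a genuinely different route from the paper's. The paper proceeds by induction on the necklace: it peels off the block at $x$ at the first cut vertex $u$, translates the desired hypotheses at the two pieces into a system of linear inequalities on the intersection sizes $|\varphi(u)\cap\varphi(x)|$ and $|\varphi(u)\cap\varphi(y)|$, and exhibits an integer solution by a parity case analysis, with Lemma~\ref{LEM:cyclep} supplying the base case (in the companion Lemma~\ref{LEM:necklace-t-large} this inequality-solving step is isolated as Proposition~\ref{prop:for-MN}). You instead give a one-shot global construction: normalize the precoloring by a color permutation so that $\varphi(x)$ and $\varphi(y)$ become cyclic intervals whose starts differ by $\delta=\frac{k-1}{2}-|\varphi(x)\cap\varphi(y)|$ (up to sign), and then exhibit a phase function $\theta\colon V\to\mathbb{Z}_k$ that increments by $\pm 2^{-1}$ along every edge. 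The phase condition $\theta(u)-\theta(v)\in\{\tfrac{k-1}{2},\tfrac{k+1}{2}\}$ is precisely the $C_k$-coloring condition, so your construction amounts to producing a $C_k$-coloring of the necklace compatible with the normalized precoloring and then reading it off as an interval fractional coloring --- a clean bridge between the circular-coloring machinery of Lemma~\ref{LEM:necklace-prime} and the fractional setting that the paper treats separately. Your verification that the walk closes up ($t\cdot 2^{-1}\equiv\delta$ for $t$ even, $\equiv-\delta$ for $t$ odd, with the cyclic arrangement flipped accordingly) is exactly where the hypothesis on $|\varphi(x)\cap\varphi(y)|$ enters, and the block-level sign choice $\epsilon_i d_i\equiv\ell_i$ is valid. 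Two small remarks: your proof never actually uses $t\le\frac{k+1}{2}$, so it establishes a marginally broader statement (the bound in the paper delineates the base case for Lemma~\ref{LEM:necklace-t-large}); and since interval colorings realize only one intersection size per phase displacement, extending your method to Lemma~\ref{LEM:necklace-t-large}, where a range of intersection sizes must be accommodated, would require an extra argument about which total phase displacements the necklace can realize, which is what the paper's inequality system handles uniformly.
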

\begin{proof}
  We prove by induction. The statement holds for $t=0,1$. Assume that it hold for any value smaller than $t$. If $x$ and $y$ are in the same $k$-cycle, then the statement holds from Lemma \ref{LEM:cyclep}. Otherwise, we can always find a vertex $u$ in the shortest $(x,y)$-path $xz_1\ldots z_{t-1}y$ which divides the necklace into two separated necklaces that one is from $x$ to $u$ and the other is from $u$ to $y$. More precisely, if $xz_1$  is not contained in a $k$-cycle, then we choose $u=z_1$; otherwise, we choose $u=z_j$ where $j$ is the largest index such that $z_{j-1}z_j$ is in the $k$-cycle containing $xz_1$. Note that $u$ is a cut vertex of $H$ that divides the necklace $H$ into two separated necklaces.  Now we shall try to provide a coloring $\varphi(u)$ of $u$ and then apply induction on the $(x,u)$-necklace and on the $(u,y)$-necklace.  This can be achieved if we can find $a$ colors from $\varphi(x)\setminus \varphi(y)$, $b$ colors from $\varphi(x)\cap \varphi(y)$, $c$ colors from $\varphi(y)\setminus \varphi(x)$, and the rest colors from  $[k]\setminus(\varphi(x)\cup\varphi(y))$ to formulate $\varphi(u)$ satisfying the induction hypothesis.

  Let $d(x,u)=s$. Then $d(u,y)=t-s$. Formally, we need to find a nonnegative integer solution $(a,b,c)$ of the following system of inequalities:
  \begin{align*}
    \left\{\begin{array}{llll}
0\le a \le |\varphi(x)\setminus\varphi(y)|=\frac{k}{4}-(-1)^t\cdot\frac{k-2t}{4},\\
0\le b \le |\varphi(x)\cap\varphi(y)|=\frac{k-2}{4}+(-1)^t\cdot\frac{k-2t}{4},\\
0\le c \le |\varphi(y)\setminus\varphi(x)|=\frac{k}{4}-(-1)^t\cdot\frac{k-2t}{4},\\
0\le \frac{k-1}{2}-a-b-c \le |[k]\setminus(\varphi(x)\cup\varphi(y))|=1+\frac{k-2}{4}+(-1)^t\cdot\frac{k-2t}{4},\\
a+b=\frac{k-2}{4}+(-1)^s\cdot\frac{k-2s}{4},\\
b+c=\frac{k-2}{4}+(-1)^{t-s}\cdot\frac{k-2(t-s)}{4}.
\end{array}
\right.
  \end{align*}

Let
$$\alpha=(-1)^t\cdot\frac{k-2t}{4},~ \beta=(-1)^s\cdot\frac{k-2s}{4},~~\text{and}~~ \gamma=(-1)^{t-s}\cdot\frac{k-2(t-s)}{4}.$$

Plugging  $a=-b+\frac{k-2}{4}+\beta$ and $c=-b+\frac{k-2}{4}+\gamma$ into the above system of inequalities, we have:
\begin{align*}
    \left\{\begin{array}{llll}
\alpha+\beta-\frac{1}{2}&\le &b&\le \frac{k-2}{4}+\beta,\\
0&\le &b& \le \frac{k-2}{4}+\alpha,\\
\alpha+\gamma-\frac{1}{2}&\le &b& \le \frac{k-2}{4}+\gamma,\\
\beta+\gamma-\frac{1}{2}&\le &b& \le \frac{k}{4}+\alpha+\beta+\gamma.
\end{array}
\right.
  \end{align*}
  Let $$M=\max\{\alpha+\beta-\frac{1}{2},0,\alpha+\gamma-\frac{1}{2},\beta+\gamma-\frac{1}{2}\}~~~~\text{and}$$
  $$N=\min\{\frac{k-2}{4}+\beta,\frac{k-2}{4}+\alpha,\frac{k-2}{4}+\gamma,\frac{k}{4}+\alpha+\beta+\gamma\}.$$
We can actually show that $0\le M\le N $ by a one-by-one compression, and then setting $b=M$ provides a valid solution of the above system of inequalities. This method will be applied  in a similar but more complicated Lemma \ref{LEM:necklace-t-large} below.

Here an alternative way to do so is to check case by case on the parity as follows.
\begin{itemize}
  \item If $t$ is odd and $s$ is odd, then set $b=M=N=\frac{s-1}{2}$.
  \item If $t$ is odd and $s$ is even, then set $b=M=N=\frac{t-s-1}{2}$.
  \item If $t$ is even and $s$ is odd, then set $b=M=N=0$.
  \item If $t$ is even and $s$ is even, then set $b=M=N=\frac{k-1-t}{2}$.
\end{itemize}
Then this solution $(a,b,c)$ provides a coloring $\varphi(u)$ as desired.
\end{proof}
We present this version of the proof of Lemma \ref{LEM:necklace-t-small} to provide an overview of the more complicated Lemma \ref{LEM:necklace-t-large} below when $d(x,y)$ is relatively large.
We aslo need the following technical inequality.

\begin{proposition}\label{prop:for-MN}
  Let $s, t$ be integers with $1\le s\le \frac{k-1}{2}$ and $\frac{k+1}{2}\le t\le s+\frac{k+1}{2}$.  Denote
$$\beta=(-1)^s\cdot\frac{k-2s}{4}~~\text{and}~~ \gamma=(-1)^{t-s}\cdot\frac{k-2(t-s)}{4}.$$
Let $\ell$ be a fixed integer with $\frac{k-t}{2}-\frac{(-1)^t+1}{4}\le \ell \le\frac{t-1}{2}-\frac{(-1)^t+1}{4}.$ Define $${\bf M}=\max\{\beta+\ell-\frac{k}{4},0,\gamma+\ell-\frac{k}{4},\beta+\gamma-\frac{1}{2}\}~~~\text{and}$$
   $${\bf N}=\min\{\frac{k-2}{4}+\beta,\ell,\frac{k-2}{4}+\gamma,\beta+\gamma+\ell+\frac{1}{2}\}.$$
  Then ${\bf M}$ and ${\bf N}$ are integers satisfying
  $$0\le {\bf M} \le {\bf N}.$$
\end{proposition}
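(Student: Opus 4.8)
The plan is to clear denominators and reduce the claim to a few linear inequalities in $s,t,k$ that are already part of the hypotheses. Since $k\ge 5$ is odd, $m:=\frac{k-1}{2}$ is an integer, and by the arithmetic fact recorded just before Lemma~\ref{LEM:cyclep} (valid for any integer in place of $d(x,y)$) the quantities
$$b_0:=\frac{k-2}{4}+\beta=\frac{k-2}{4}+(-1)^s\frac{k-2s}{4},\qquad c_0:=\frac{k-2}{4}+\gamma=\frac{k-2}{4}+(-1)^{t-s}\frac{k-2(t-s)}{4}$$
are integers, with $b_0=\frac{k-1-s}{2}$ when $s$ is even and $b_0=\frac{s-1}{2}$ when $s$ is odd, and likewise for $c_0$ with $t-s$ in place of $s$. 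Substituting $\beta=b_0-\frac{k-2}{4}$ and $\gamma=c_0-\frac{k-2}{4}$ into the definitions rewrites the two quantities as
$${\bf M}=\max\{\,b_0+\ell-m,\ 0,\ c_0+\ell-m,\ b_0+c_0-m\,\},\qquad {\bf N}=\min\{\,b_0,\ \ell,\ c_0,\ b_0+c_0+\ell-m+1\,\}.$$
As $\ell$ is an integer by hypothesis and $b_0,c_0,m$ are integers, this already shows that ${\bf M}$ and ${\bf N}$ are integers, and $0\le{\bf M}$ holds because $0$ is one of the four terms of the maximum. What remains is to prove ${\bf M}\le{\bf N}$.

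I would then expand ${\bf M}\le{\bf N}$ into the sixteen inequalities asserting that each term of ${\bf M}$ is at most each term of ${\bf N}$. Twelve of them reduce, after cancellation, to (or are implied by) the ``box'' inequalities $0\le b_0\le m$, $0\le c_0\le m$, $0\le\ell\le m$, and the remaining four are the ``triangle-type'' relations
$$b_0+\ell\le c_0+m,\qquad c_0+\ell\le b_0+m,\qquad b_0+c_0\le\ell+m,\qquad b_0+c_0+\ell\ge m-1.$$

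To get the box inequalities I would first derive from $1\le s\le\frac{k-1}{2}$ and $\frac{k+1}{2}\le t\le s+\frac{k+1}{2}$ the bounds $1\le t-s\le\frac{k+1}{2}$ and $t\le k$; the closed forms above then make $0\le b_0,c_0\le m$ immediate (splitting on parity), and the given range of $\ell$ together with $t\le k$ and the oddness of $k$ yields $0\le\ell\le m$. For the four triangle-type relations the plan is a case analysis on the parities of $s$ and of $t$ (which also fixes the parity of $t-s$): in each of the four resulting cases $b_0$, $c_0$, and the endpoints $\ell_{\min}=\frac{k-t}{2}-\frac{(-1)^t+1}{4}$ and $\ell_{\max}=\frac{t-1}{2}-\frac{(-1)^t+1}{4}$ of the interval for $\ell$ are explicit linear expressions in $s,t,k$; using $\ell\le\ell_{\max}$ in the first two relations and $\ell\ge\ell_{\min}$ in the last two, each relation simplifies to one of $t\le k$, $2s\le k+1$, or $t\le s+\frac{k+1}{2}$, all of which hold by hypothesis.

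The main obstacle is exactly this final parity bookkeeping: it is routine arithmetic, but there are several overlapping parities to track and it must be carried out carefully at the boundary values. The fact that several of the triangle-type relations become equalities when $s=\frac{k-1}{2}$, $t=\frac{k+1}{2}$, or $t=s+\frac{k+1}{2}$ shows that no slack is available, i.e.\ the constants in the statement are sharp.
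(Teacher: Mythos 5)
Your plan is correct, and at the strategic level it matches the paper's proof: both verify the sixteen pairwise inequalities behind ${\bf M}\le{\bf N}$, both observe that most of them collapse to elementary box bounds, and both are left with four ``triangle-type'' constraints. Where you diverge is how those last four are handled. The paper keeps $\beta,\gamma$ as quarter-integers and closes the argument with the triangle inequality, writing $|\beta|+|\gamma|\le\frac{k-2s}{4}+\frac{k-2(t-s)}{4}=\frac{k-t}{2}$ and then comparing with $\ell+\frac12$ and with $\frac{k-1}{2}-\ell$. You instead substitute $b_0=\frac{k-2}{4}+\beta$, $c_0=\frac{k-2}{4}+\gamma$, $m=\frac{k-1}{2}$, which turns every term of ${\bf M}$ and ${\bf N}$ into an explicit integer (so integrality is immediate rather than ``routine to check''), and then close out the four residual inequalities by a parity split on $s$ and $t$.

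Your route is slightly longer but buys something real at the boundary. The hypotheses allow $t-s=\frac{k+1}{2}$ (achieved when $t=s+\frac{k+1}{2}$; this is exactly the case fed to the proposition from Lemma~\ref{LEM:necklace-t-large}, Case~1, and again in Case~2). For that value, $k-2(t-s)=-1<0$, so $|\gamma|=\frac14>\frac{k-2(t-s)}{4}=-\frac14$, and the paper's chain $|\gamma|\le\frac{k-2(t-s)}{4}$ fails as written; e.g.\ $k=5$, $s=1$, $t=4$ gives $|\beta|+|\gamma|=1$ whereas $\frac{k-t}{2}=\frac12$. The proposition's conclusion is still true there (one can check $|\beta+\gamma|=\frac12\le\ell+\frac12$ directly, and your four closed-form inequalities reduce to $t\le k$, $2s\le k+1$, $t\le s+\frac{k+1}{2}$, each with equality precisely at the stated boundary values), so the paper's statement is not at risk, but its displayed chain of inequalities needs the small correction $|\gamma|=\frac{|k-2(t-s)|}{4}$ and a direct estimate in this boundary parity. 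Your parity-by-parity computation with $\ell_{\min},\ell_{\max}$ avoids the absolute-value shortcut altogether and is airtight there. So: same skeleton, but your reparametrization plus case analysis is the more careful execution, and it is a valid proof once the four cases are written out (I spot-checked all four; they indeed reduce to the three hypothesis inequalities you name, several with equality).
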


\begin{proof}  It is routine to check that each term in ${\bf M}$ and in ${\bf N}$ is an integer by discussing the parity of $t$ and $s$. To show that ${\bf M} \le {\bf N}$, is suffices to check 16 inequalities one by one.
    \begin{itemize}
      \item $\beta+\ell-\frac{k}{4} \le {\bf N}=\min\{\frac{k-2}{4}+\beta,\ell,\frac{k-2}{4}+\gamma,\beta+\gamma+\ell+\frac{1}{2}\}$;
       \item $0 \le {\bf N}=\min\{\frac{k-2}{4}+\beta,\ell,\frac{k-2}{4}+\gamma,\beta+\gamma+\ell+\frac{1}{2}\}$;
       \item  $\gamma+\ell-\frac{k}{4} \le {\bf N}=\min\{\frac{k-2}{4}+\beta,\ell,\frac{k-2}{4}+\gamma,\beta+\gamma+\ell+\frac{1}{2}\}$;
       \item  $\beta+\gamma-\frac{1}{2} \le {\bf N}=\min\{\frac{k-2}{4}+\beta,\ell,\frac{k-2}{4}+\gamma,\beta+\gamma+\ell+\frac{1}{2}\}$.
    \end{itemize}
    It turns out to become the following:
    \begin{itemize}
      \item $\ell\le \frac{k-1}{2}$, $\beta\le \frac{k}{4}$, $\beta-\gamma\le \frac{k-1}{2}-\ell$, $-\gamma\le \frac{k+2}{4}$;
       \item  $-\beta\le \frac{k-2}{4}$, $0\le \ell$, $-\gamma\le \frac{k-2}{4}$,  $-\gamma-\beta\le \ell+\frac{1}{2}$;
       \item $\gamma-\beta\le \frac{k-1}{2}-\ell$, $\gamma\le\frac{k}{4}$, $\ell\le \frac{k-1}{2}$, $-\beta\le \frac{k+2}{4}$;
       \item $\gamma\le\frac{k}{4}$, $\beta+\gamma\le \ell+\frac{1}{2}$, $\beta\le \frac{k}{4}$, $0\le \ell+1$.
               \end{itemize}
      Except some trivial ones that $|\beta|\le\frac{k}{4}$, $|\gamma|\le\frac{k}{4}$, $0\le \ell\le \frac{k-1}{2}$, this reduces to the following:

               \begin{itemize}
       \item    $-\gamma-\beta\le \ell+\frac{1}{2}$, $\beta+\gamma\le \ell+\frac{1}{2}$, $\beta-\gamma\le \frac{k-1}{2}-\ell$, and $\gamma-\beta\le \frac{k-1}{2}-\ell$.
               \end{itemize}
               Those inequalities above are all true since
                \begin{itemize}

       \item    $|\gamma+\beta|\le |\gamma|+|\beta|\le \frac{k-2s}{4}+\frac{k-2(t-s)}{4}=\frac{k-t}{2}\le \ell+\frac{1}{2}$  and
       \item $|\beta-\gamma|+\ell\le |\beta|+ |\gamma|+\ell \le \frac{k-2s}{4}+\frac{k-2(t-s)}{4}+\frac{t-1}{2}= \frac{k-1}{2}$.
               \end{itemize}
               This proves that $0\le {\bf M} \le {\bf N}.$
\end{proof}

\begin{lemma}\label{LEM:necklace-t-large}
  Let $N(x,y)$ be a necklace with a precoloring $\varphi$ of $\{x,y\}$. Suppose that the distance between $x$ and $y$ satisfies $d(x,y)=t\ge \frac{k+1}{2}$. If $$\frac{k-t}{2}-\frac{(-1)^t+1}{4}\le |\varphi(x)\cap\varphi(y)|\le \frac{t-1}{2}-\frac{(-1)^t+1}{4},$$
  then $H$ is $\varphi_{\{x,y\}}$-colorable.
\end{lemma}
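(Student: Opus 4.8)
The plan is to prove Lemma~\ref{LEM:necklace-t-large} by induction on $t=d(x,y)$, reusing the cut-vertex scheme from the proof of Lemma~\ref{LEM:necklace-t-small} but letting Proposition~\ref{prop:for-MN} carry out the arithmetic that was done by the explicit ``$b=M=N$'' case analysis there. For the base case $t=\frac{k+1}{2}$, the two bounds in the hypothesis, $\frac{k-t}{2}-\frac{(-1)^t+1}{4}$ and $\frac{t-1}{2}-\frac{(-1)^t+1}{4}$, coincide and equal $\frac{k-2}{4}+(-1)^t\cdot\frac{k-2t}{4}$, which is exactly the value required by Lemma~\ref{LEM:necklace-t-small} (equivalently Lemma~\ref{LEM:cyclep}); so the claim reduces to Lemma~\ref{LEM:necklace-t-small}.

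For the inductive step $t\ge\frac{k+3}{2}$: since $t>\frac{k-1}{2}$, the vertices $x,y$ cannot lie on a common $k$-cycle, so exactly as in the proof of Lemma~\ref{LEM:necklace-t-small} there is a cut vertex $u$ lying on a shortest $(x,y)$-path --- the far end of the first $k$-cycle through $x$, or the neighbour of $x$ if the first edge lies in no $k$-cycle --- that splits $N(x,y)$ into an $(x,u)$-necklace of distance $s:=d(x,u)\in[1,\frac{k-1}{2}]$ and a $(u,y)$-necklace of distance $t-s<t$. I then want to choose $\varphi(u)\in\binom{[k]}{(k-1)/2}$ making both pieces colourable. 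Writing $\ell=|\varphi(x)\cap\varphi(y)|$ and letting $a,b,c$ be the sizes of $\varphi(u)$ restricted to the cells $\varphi(x)\setminus\varphi(y)$, $\varphi(x)\cap\varphi(y)$, $\varphi(y)\setminus\varphi(x)$ (whose sizes are $\frac{k-1}{2}-\ell$, $\ell$, $\frac{k-1}{2}-\ell$, with the fourth cell of size $\ell+1$), the requirement becomes the obvious nonnegativity/upper-bound constraints on $(a,b,c,d)$ with $d=\frac{k-1}{2}-a-b-c$, together with $a+b=\frac{k-2}{4}+\beta$ where $\beta=(-1)^s\frac{k-2s}{4}$ (so Lemma~\ref{LEM:necklace-t-small} applies to the short $(x,u)$-necklace), and a constraint on $b+c=|\varphi(u)\cap\varphi(y)|$ dictated by the $(u,y)$-necklace.

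If $t-s\le\frac{k+1}{2}$, then Lemma~\ref{LEM:necklace-t-small} applies to the $(u,y)$-necklace and forces $b+c=\frac{k-2}{4}+\gamma$ with $\gamma=(-1)^{t-s}\frac{k-2(t-s)}{4}$; substituting $a=\frac{k-2}{4}+\beta-b$ and $c=\frac{k-2}{4}+\gamma-b$ turns the whole system into $\mathbf{M}\le b\le\mathbf{N}$ with precisely the quantities $\mathbf{M},\mathbf{N}$ of Proposition~\ref{prop:for-MN} for this $\ell,s,t$. Since $1\le s\le\frac{k-1}{2}$, $\frac{k+1}{2}\le t\le s+\frac{k+1}{2}$, and $\ell$ lies in the hypothesized range, Proposition~\ref{prop:for-MN} yields integers with $0\le\mathbf{M}\le\mathbf{N}$; taking $b=\mathbf{M}$ produces a valid $\varphi(u)$, and Lemma~\ref{LEM:necklace-t-small} on both pieces finishes this case. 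If instead $t-s>\frac{k+1}{2}$, I apply the induction hypothesis to the $(u,y)$-necklace, so $b+c$ need only lie in the nondegenerate interval $[\frac{k-(t-s)}{2}-\frac{(-1)^{t-s}+1}{4},\ \frac{t-s-1}{2}-\frac{(-1)^{t-s}+1}{4}]$; the extra slack makes the resulting system strictly easier than before, and the same one-by-one compression as in Lemma~\ref{LEM:necklace-t-small} --- again resting on estimates of the type $|\beta\pm\gamma|\le\frac{k-(t-s)}{2}$ --- provides a feasible $(a,b,c)$, hence a valid $\varphi(u)$, after which Lemma~\ref{LEM:necklace-t-small} on $(x,u)$ and the induction hypothesis on $(u,y)$ complete the proof.

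The routine part is the cut-vertex induction, which is identical in shape to the proof of Lemma~\ref{LEM:necklace-t-small}. The main obstacle is the arithmetic bookkeeping in the first case: matching the four cell sizes, the parities of $s$ and $t-s$, and the $\pm\frac12$-shifts so that the feasibility system becomes verbatim the hypothesis of Proposition~\ref{prop:for-MN} --- which is exactly why that proposition was isolated and proved separately --- and, in the second case, confirming that the range constraint on $|\varphi(u)\cap\varphi(y)|$ really leaves enough room to be met simultaneously with $a+b=\frac{k-2}{4}+\beta$.
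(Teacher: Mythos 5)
Your Case~1 ($t-s\le\frac{k+1}{2}$) is correct and matches the paper's argument exactly: the hypotheses of Proposition~\ref{prop:for-MN} are satisfied with the same $s$, $t$ and $\ell$ as in the lemma, so $b=\mathbf M$ gives a feasible $(a,b,c)$.

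Case~2 ($t-s>\frac{k+1}{2}$) is a genuine gap. The estimate you invoke, $|\beta\pm\gamma|\le\frac{k-(t-s)}{2}$ with $\gamma=(-1)^{t-s}\frac{k-2(t-s)}{4}$, is simply false in this regime. Once $t-s>\frac{k}{2}$ the quantity $k-2(t-s)$ is negative, so $|\gamma|=\frac{2(t-s)-k}{4}$; then $|\beta|+|\gamma|=\frac{t-2s}{2}$, which can exceed $\frac{k-(t-s)}{2}$ (already for $s=1$ this requires $t\le\frac{k+3}{2}$, which fails whenever $t-s\ge\frac{k+3}{2}$). Because of this the ``same one-by-one compression as in Lemma~\ref{LEM:necklace-t-small}'' does not go through, and ``the extra slack makes the system strictly easier'' is an intuition, not a proof: a weaker constraint on $b+c$ does not by itself certify that the four remaining two-sided constraints have a common integer solution. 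The paper handles this case by a different device: rather than carrying the interval constraint for $b+c$ through the compression, it first observes that the single value $\frac{k-2-(-1)^{(k+1)/2}}{4}$ (the $d=\frac{k+1}{2}$ target of Lemma~\ref{LEM:cyclep}) lies inside the allowed interval, pins $b+c$ to that value, replaces $\gamma$ by the fixed constant $-(-1)^{(k+1)/2}\frac14$, and then re-invokes Proposition~\ref{prop:for-MN} with the parameter $t-s$ set to $\frac{k+1}{2}$ --- thereby never needing any estimate involving the actual (large) $t-s$. You would need to supply either this pinning argument, or a direct verification that the relaxed system $\{a+b=\frac{k-2}{4}+\beta,\ L\le b+c\le U,\ 0\le a\le\frac{k-1}{2}-\ell,\ 0\le b\le\ell,\ 0\le c\le\frac{k-1}{2}-\ell,\ 0\le\frac{k-1}{2}-a-b-c\le\ell+1\}$ always has a nonnegative integer solution, before Case~2 can be considered proved.
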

\begin{proof}
  The basic case  $t=\frac{k+1}{2}$  has already been handled in Lemma \ref{LEM:necklace-t-small}. We shall prove Lemma \ref{LEM:necklace-t-large} by induction. Similarly, there exists a cut vertex $u$ of $H$ in the shortest $(x,y)$-path that divides the necklace into two parts (two separated necklaces), one is from $x$ to $u$ and the other is from $u$ to $y$. We choose such cut vertex $u$ with the smallest distance from $x$. So either $xu$ is an edge or $x$ and $u$ are in the same $k$-cycle, and hence we have $d(x,u)=s\le \frac{k-1}{2}$. We shall divide the discussion into two cases depending on the value of $d(u,y)=t-s$.

  ~

  \noindent{\bf Case 1.} $d(u,y)=t-s\le \frac{k+1}{2}.$

~~

   Note that in this case $t\le s+ \frac{k+1}{2}\le k$. Now we shall try to find $a$ colors from $\varphi(x)\setminus \varphi(y)$, $b$ colors from $\varphi(x)\cap \varphi(y)$, $c$ colors from $\varphi(y)\setminus \varphi(x)$, and the rest colors from  $[k]\setminus(\varphi(x)\cup\varphi(y))$ to formulate $\varphi(u)$ satisfying the induction hypothesis. Formally, similar as the proof of Lemma \ref{LEM:necklace-t-small}, we need to find a nonnegative integer solution $(a,b,c)$ of the following system of inequalities:
  \begin{align*}
    \left\{\begin{array}{llll}
0\le a \le |\varphi(x)\setminus\varphi(y)|,\\
0\le b \le |\varphi(x)\cap\varphi(y)|,\\
0\le c \le |\varphi(y)\setminus\varphi(x)|,\\
0\le \frac{k-1}{2}-a-b-c \le |[k]\setminus(\varphi(x)\cup\varphi(y))|,\\
a+b=\frac{k-2}{4}+(-1)^s\cdot\frac{k-2s}{4},\\
b+c=\frac{k-2}{4}+(-1)^{t-s}\cdot\frac{k-2(t-s)}{4}.
\end{array}
\right.
  \end{align*}
  Let $\ell=|\varphi(x)\cap\varphi(y)|$ be a fixed number with $\frac{k-t}{2}-\frac{(-1)^t+1}{4}\le \ell \le\frac{t-1}{2}-\frac{(-1)^t+1}{4}.$

  Denote
$$\beta=(-1)^s\cdot\frac{k-2s}{4}~~\text{and}~~ \gamma=(-1)^{t-s}\cdot\frac{k-2(t-s)}{4}.$$

  Then by plugging  $a$ and $c$ into above system of inequalities, it reduces to the following:
  \begin{align*}
    \left\{\begin{array}{llll}
\beta+\ell-\frac{k}{4}&\le &b&\le \frac{k-2}{4}+\beta,\\
0&\le &b& \le \ell,\\
\gamma+\ell-\frac{k}{4}&\le &b& \le \frac{k-2}{4}+\gamma,\\
\beta+\gamma-\frac{1}{2}&\le &b& \le \beta+\gamma+\ell+\frac{1}{2}.
\end{array}
\right.
  \end{align*}
  Let $${\bf M}=\max\{\beta+\ell-\frac{k}{4},0,\gamma+\ell-\frac{k}{4},\beta+\gamma-\frac{1}{2}\}~~~\text{and}$$
$${\bf N}=\min\{\frac{k-2}{4}+\beta,\ell,\frac{k-2}{4}+\gamma,\beta+\gamma+\ell+\frac{1}{2}\}.$$

By Proposition \ref{prop:for-MN}, ${\bf M}$ and ${\bf N}$ are integers satisfying
  $0\le {\bf M} \le {\bf N}.$ Therefore, we choose
  $$b={\bf M}, a= \frac{k-2}{4}+(-1)^s\cdot\frac{k-2s}{4}- {\bf M}, ~\text{and}~ c=\frac{k-2}{4}+(-1)^{t-s}\cdot\frac{k-2(t-s)}{4}- {\bf M},$$
  providing a desired nonnegative integer solution $(a,b,c)$.

~

  \noindent{\bf Case 2.} $d(u,y)=t-s\ge \frac{k+3}{2}.$

     ~

        We are still trying to find $a$ colors from $\varphi(x)\setminus \varphi(y)$, $b$ colors from $\varphi(x)\cap \varphi(y)$, $c$ colors from $\varphi(y)\setminus \varphi(x)$, and the rest colors from  $[k]\setminus(\varphi(x)\cup\varphi(y))$ to form $\varphi(u)$ satisfying the induction hypothesis.  This formulates similar system of inequalities as follows:
  \begin{align*}
    \left\{\begin{array}{llll}
0\le a \le |\varphi(x)\setminus\varphi(y)|,\\
0\le b \le |\varphi(x)\cap\varphi(y)|,\\
0\le c \le |\varphi(y)\setminus\varphi(x)|,\\
0\le \frac{k-1}{2}-a-b-c \le |[k]\setminus(\varphi(x)\cup\varphi(y))|,\\
a+b=\frac{k-2}{4}+(-1)^s\cdot\frac{k-2s}{4},\\
\frac{k-t+s}{2}-\frac{(-1)^{t-s}+1}{4}\le b+c\le \frac{t-s-1}{2}-\frac{(-1)^{t-s}+1}{4}.
\end{array}
\right.
  \end{align*}

  Notice that $\frac{k-2-(-1)^{\frac{k+1}{2}}}{4}$ is an integer (this value comes from the case $d(x,y)=\frac{k+1}{2}$), and we have $$\frac{k-t+s}{2}-\frac{(-1)^{t-s}+1}{4}\le \frac{k-2-(-1)^{\frac{k+1}{2}}}{4}\le \frac{t-s-1}{2}-\frac{(-1)^{t-s}+1}{4}.$$ So it is enough to find a solution $(a,b,c)$ with the last inequality replaced by $$b+c=\frac{k-2-(-1)^{\frac{k+1}{2}}}{4}.$$

  Let $\ell=|\varphi(x)\cap\varphi(y)|$ be a fixed number with $\frac{k-t}{2}-\frac{(-1)^t+1}{4}\le \ell \le\frac{t-1}{2}-\frac{(-1)^t+1}{4}.$ Note that $0\le \ell \le \frac{k-1}{2}$.

  Denote
$$\beta=(-1)^s\cdot\frac{k-2s}{4}~~\text{and}~~ \gamma=-(-1)^{\frac{k+1}{2}}\cdot\frac{1}{4}.$$

     Then with similar calculation, by plugging  $a$ and $c$ into the  above system of  inequalities, it becomes the following:
  \begin{align*}
    \left\{\begin{array}{llll}
\beta+\ell-\frac{k}{4}&\le &b&\le \frac{k-2}{4}+\beta,\\
0&\le &b& \le \ell,\\
\gamma+\ell-\frac{k}{4}&\le &b& \le \frac{k-2}{4}+\gamma,\\
\beta+\gamma-\frac{1}{2}&\le &b& \le \beta+\gamma+\ell+\frac{1}{2}.
\end{array}
\right.
  \end{align*}
  We still let $${\bf M}=\max\{\beta+\ell-\frac{k}{4},0,\gamma+\ell-\frac{k}{4},\beta+\gamma-\frac{1}{2}\}~~~\text{and}$$
   $${\bf N}=\min\{\frac{k-2}{4}+\beta,\ell,\frac{k-2}{4}+\gamma,\beta+\gamma+\ell+\frac{1}{2}\}.$$
   By Proposition \ref{prop:for-MN} with $t-s=\frac{k+1}{2}$, ${\bf M}$ and ${\bf N}$ are integers satisfying
  $0\le {\bf M} \le {\bf N}.$ Therefore, we can choose $b={\bf M}$ and corresponding $a$ and $c$ to form  a desired solution $(a,b,c)$. This completes the proof.
\end{proof}

By Lemma~\ref{LEM:necklace-t-large}, we have the following corollary.

\begin{corollary}\label{Cr-precoloring-f}
Let $N(x,y)$ be a necklace. If $d(x,y)\geq k$, then $N(x,y)$ is $\varphi_{\{x,y\}}$-colorable for
any precoloring $\varphi$ of $\{x,y\}$.
\end{corollary}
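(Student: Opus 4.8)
The plan is to derive Corollary~\ref{Cr-precoloring-f} immediately from Lemma~\ref{LEM:necklace-t-large}, by showing that once $d(x,y)\ge k$ the constraint imposed on $|\varphi(x)\cap\varphi(y)|$ in that lemma is automatically satisfied by \emph{every} precoloring. First I would record the trivial bounds: for any precoloring $\varphi$ of $\{x,y\}$ we have $|\varphi(x)|=|\varphi(y)|=\frac{k-1}{2}$, hence $0\le|\varphi(x)\cap\varphi(y)|\le\frac{k-1}{2}$. Writing $t=d(x,y)\ge k$, it therefore suffices to prove that the interval $\bigl[\frac{k-t}{2}-\frac{(-1)^t+1}{4},\ \frac{t-1}{2}-\frac{(-1)^t+1}{4}\bigr]$ occurring in the hypothesis of Lemma~\ref{LEM:necklace-t-large} contains the set $\{0,1,\dots,\frac{k-1}{2}\}$, and then to apply that lemma (legitimately, since $t\ge k>\frac{k+1}{2}$).

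Next I would verify the two endpoint estimates. For the left endpoint, $\frac{(-1)^t+1}{4}\ge 0$ and $\frac{k-t}{2}\le 0$ since $t\ge k$, hence $\frac{k-t}{2}-\frac{(-1)^t+1}{4}\le 0$. For the right endpoint I would split on the parity of $t$, using that $k$ is odd: if $t$ is odd the endpoint equals $\frac{t-1}{2}\ge\frac{k-1}{2}$; if $t$ is even then $t\ge k+1$ and the endpoint equals $\frac{t-2}{2}\ge\frac{k-1}{2}$. Consequently the interval contains $[0,\frac{k-1}{2}]$ and in particular the value $|\varphi(x)\cap\varphi(y)|$, so Lemma~\ref{LEM:necklace-t-large} yields that $N(x,y)$ is $\varphi_{\{x,y\}}$-colorable.

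There is no real obstacle here: the whole content is already in Lemma~\ref{LEM:necklace-t-large}, and the only thing to be careful about is the bookkeeping with the integer-valued term $\frac{(-1)^t+1}{4}$ together with the use of the parity of $k$ (odd) to get the clean endpoint bounds for both parities of $t$.
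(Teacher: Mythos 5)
Your proof is correct and is exactly the intended argument: the paper states Corollary~\ref{Cr-precoloring-f} as an immediate consequence of Lemma~\ref{LEM:necklace-t-large} without supplying further details, and your endpoint calculations (including the parity split on $t$ that uses the oddness of $k$ to conclude $t\ge k+1$ when $t$ is even) are precisely the verification that makes the deduction go through.
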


Recall Definition \ref{DEF:necklace} that a $(k_1,k_2;k_3)$-bull-necklace is a subgraph obtained from a $(k_1,k_2,k_3)$-thread by applying $C_k$-replacement operation on some edges of the $k_3$-thread. For $1\leq t\leq \frac{k-1}{2}$, let $B_v(t,s)$ be a   $(t-1,t-1;r)$-bull-necklace $N_v$ with end vertices $x,y,z$ and $d(v,z)=s$.

\begin{lemma}\label{Fr-3-necklace}
For a bull-necklace $B_v(t,s)$ with end vertices $x,y,z$, if  $1\leq t\leq \frac{k-1}{2}$ and $t+s\geq k$, then $B_v(t,s)$ is $\varphi_{\{x,y,z\}}$-colorable for any precoloring $\varphi$ of $\{x,y,z\}$ satisfying $|\varphi(x)\cap \varphi(y)|=\frac{k-1-2t}{2}$.
\end{lemma}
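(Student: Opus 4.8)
The plan is to reduce the statement about the bull-necklace $B_v(t,s)$ to the necklace results already proved (Lemmas~\ref{LEM:necklace-t-small} and~\ref{LEM:necklace-t-large}, or equivalently Corollary~\ref{Cr-precoloring-f}). Recall that $B_v(t,s)$ is built from a $(t-1,t-1;r)$-bull-necklace: two $(t-1)$-threads from $v$ to $x$ and from $v$ to $y$ (so $d(v,x)=d(v,y)=t$), together with an $(v,z)$-necklace of length $s$. The strategy is to first choose a color $\varphi(v)$ for the center vertex $v$, and then extend independently along the three necklaces hanging off $v$. For the two arms to $x$ and $y$ we need to satisfy the precoloring-extension condition for a necklace of length $t$ with $1\le t\le\frac{k-1}{2}$, which by Lemma~\ref{LEM:necklace-t-small} means we need $|\varphi(v)\cap\varphi(x)| = \frac{k-2}{4}+(-1)^{t}\cdot\frac{k-2t}{4}$ and likewise with $y$ in place of $x$; call this common target value $m_t$. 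For the arm to $z$, since $t+s\ge k$ and $s\le t+s$ we have $s\ge k-t\ge k-\frac{k-1}{2}=\frac{k+1}{2}$, so $s$ is in the ``large distance'' regime: by Corollary~\ref{Cr-precoloring-f} (when $s\ge k$) or Lemma~\ref{LEM:necklace-t-large} (when $\frac{k+1}{2}\le s<k$) the arm to $z$ extends for \emph{any} choice of $\varphi(v)$, provided $|\varphi(v)\cap\varphi(z)|$ lies in the interval $[\frac{k-s}{2}-\frac{(-1)^s+1}{4},\,\frac{s-1}{2}-\frac{(-1)^s+1}{4}]$, and one checks that $t+s\ge k$ forces this interval to contain every attainable value of $|\varphi(v)\cap\varphi(z)|$ (namely everything between $\max\{0,|\varphi(v)|+|\varphi(z)|-k\}$ and $\min\{|\varphi(v)|,|\varphi(z)|\}$). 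So the $z$-arm imposes no real constraint.

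The crux is therefore to show there exists a $\frac{k-1}{2}$-subset $\varphi(v)\subseteq[k]$ with $|\varphi(v)\cap\varphi(x)|=m_t$ and $|\varphi(v)\cap\varphi(y)|=m_t$ simultaneously, given only that $|\varphi(x)\cap\varphi(y)|=\frac{k-1-2t}{2}=m_t-\frac{1}{2}$ or $m_t+\frac12$ — wait, more precisely given $|\varphi(x)\cap\varphi(y)| = \frac{k-1-2t}{2}$ exactly. This is a counting/feasibility problem identical in spirit to the system-of-inequalities arguments in Lemmas~\ref{LEM:necklace-t-small} and~\ref{LEM:necklace-t-large}: partition $[k]$ into the four Venn regions $\varphi(x)\setminus\varphi(y)$, $\varphi(x)\cap\varphi(y)$, $\varphi(y)\setminus\varphi(x)$, $[k]\setminus(\varphi(x)\cup\varphi(y))$, whose sizes are determined by $k$, $t$, and $|\varphi(x)\cap\varphi(y)|=\frac{k-1-2t}{2}$, and ask how many elements of $\varphi(v)$ to put in each region. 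Writing $a,b,c$ for the counts in the first three regions (the fourth being $\frac{k-1}{2}-a-b-c$), the requirement $|\varphi(v)\cap\varphi(x)|=m_t$ is $a+b=m_t$, the requirement $|\varphi(v)\cap\varphi(y)|=m_t$ is $b+c=m_t$, so $a=c$, and one needs $0\le a,b,c$, $a+b+c\le\frac{k-1}{2}$, plus the upper bounds from the region sizes. Substituting and simplifying, feasibility reduces to checking that an interval of the form $[\max\{0,\ldots\},\min\{\ldots\}]$ for the free parameter $b$ is nonempty; this is a finite, elementary verification done either by a one-by-one comparison of the $\max$-terms against the $\min$-terms (exactly as in Proposition~\ref{prop:for-MN}) or by splitting on the parity of $t$ and exhibiting an explicit value of $b$ (as in the case analysis at the end of Lemma~\ref{LEM:necklace-t-small}).

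Having fixed such a $\varphi(v)$, I would invoke Lemma~\ref{LEM:necklace-t-small} on each of the two $(v,x)$- and $(v,y)$-necklaces of length $t$ and Corollary~\ref{Cr-precoloring-f} / Lemma~\ref{LEM:necklace-t-large} on the $(v,z)$-necklace of length $s$; since these three necklaces share only the vertex $v$, the three partial colorings glue together into a fractional $(k:\frac{k-1}{2})$-coloring of $B_v(t,s)$ extending $\varphi$. I expect the main obstacle to be purely bookkeeping: correctly reading off the four region sizes in terms of $\frac{k-1-2t}{2}$ and verifying the nonemptiness of the resulting interval for $b$, while keeping the parity cases of $t$ (and the sign $(-1)^t$ appearing in $m_t$) straight. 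One should double-check that $\frac{k-1-2t}{2}$ is a legitimate (nonnegative integer) intersection size, which holds since $1\le t\le\frac{k-1}{2}$ and $k$ is odd; and that the hypothesis $t+s\ge k$ is used exactly where claimed, namely to make the $z$-arm unconstrained. No new ideas beyond those already deployed in the preceding lemmas should be required.
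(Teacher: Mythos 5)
Your plan hinges on the assertion that, once $\lvert\varphi(v)\cap\varphi(x)\rvert=\lvert\varphi(v)\cap\varphi(y)\rvert=m_t$ is arranged, the $z$-arm ``imposes no real constraint'' because $t+s\ge k$ forces the admissible window for $\lvert\varphi(v)\cap\varphi(z)\rvert$ to contain every attainable value. That assertion is false for most of the relevant parameter range. The window from Lemma~\ref{LEM:necklace-t-large} is $\bigl[\tfrac{k-s}{2}-\tfrac{(-1)^s+1}{4},\,\tfrac{s-1}{2}-\tfrac{(-1)^s+1}{4}\bigr]$, while the attainable range of $\lvert\varphi(v)\cap\varphi(z)\rvert$ for two $\tfrac{k-1}{2}$-subsets is all of $\{0,1,\dots,\tfrac{k-1}{2}\}$. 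The window swallows this whole range only when $s\ge k$; under the hypotheses of the lemma $s$ need only satisfy $s\ge k-t\ge\tfrac{k+1}{2}$, so $s$ can be much smaller than $k$. For a concrete failure take $k=7$, $t=3$, $s=4$: the window is the singleton $\{1\}$. With $\varphi(x)=\{1,2,3\}$, $\varphi(y)=\{4,5,6\}$, $\varphi(z)=\{1,2,7\}$, the set $\varphi(v)=\{1,4,7\}$ satisfies your $x$- and $y$-constraints ($m_t=1$) but has $\lvert\varphi(v)\cap\varphi(z)\rvert=2$, so the $z$-arm does \emph{not} extend. In other words, the $z$-arm genuinely restricts the choice of $\varphi(v)$, and your three-variable $(a,b,c)$ system cannot see this: it only records how $\varphi(v)$ meets the Venn regions of $\varphi(x),\varphi(y)$, not how it meets $\varphi(z)$.

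This is exactly the difficulty the paper's proof is organized around. Rather than solving a system ignoring $z$, it further partitions each of $A=\varphi(x)\setminus\varphi(y)$, $C=\varphi(y)\setminus\varphi(x)$, and a designated set $S$ ($=B$ or $=D$ according to the parity of $t$) by whether elements lie in $\varphi(z)$ or not, and then (Claims~I and~II in the paper) balances how many elements of $\varphi(v)$ are drawn from inside versus outside $\varphi(z)$ so that $\lvert\varphi(v)\cap\varphi(z)\rvert$ lands in the window dictated by Lemmas~\ref{LEM:necklace-t-small}/\ref{LEM:necklace-t-large}; only here is the hypothesis $t+s\ge k$ used, to show the achieved range for $\lvert\varphi(v)\cap\varphi(z)\rvert$ sits inside the required one. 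So the skeleton of your argument (choose $\varphi(v)$, then glue along the three arms that meet only at $v$) is the right one, but the ``$z$-arm is free'' shortcut is where the proof breaks, and repairing it essentially forces you into the six-region bookkeeping the paper actually carries out.
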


\begin{proof} Let $\varphi$ be a precoloring of $\{x,y,z\}$ such that $|\varphi(x)\cap \varphi(y)|=\frac{k-1-2t}{2}$.
Denote $A=\varphi(x)\setminus \varphi(y)$, $B=\varphi(x)\cap\varphi(y)$, $C=\varphi(y)\setminus \varphi(x)$, and $D=[k]\setminus (\varphi(x)\cup \varphi(y))$. Then $|A|=|C|=t$, $|B|=\frac{k-1-2t}{2}$,  and $|D|=\frac{k+1-2t}{2}$.
Let $S$ be a subset of $[k]$ such that $S=B$ if $t$ is even and $S=D$ if $t$ is odd. Then $|S|=\frac{k+1-2t}{2}-\frac{(-1)^t+1}{2}$. Denote $S_1=S\setminus \varphi(z)$ and $S_2=S\cap \varphi(z)$.

We first make the following claim.

  \begin{claimI}\label{CL:Acap} Each of the following holds:

(i) either $|A\cap \varphi(z)|\geq \frac{t+1}{2}-\frac{(-1)^t+1}{4}-|S_2|$ or $|C\cap \varphi(z)|\geq \frac{t+1}{2}-\frac{(-1)^t+1}{4}-|S_2|$;

(ii) either $|A\setminus  \varphi(z)|\geq \frac{t+1}{2}-\frac{(-1)^t+1}{4}-|S_1|$ or $|C\setminus  \varphi(z)|\geq \frac{t+1}{2}-\frac{(-1)^t+1}{4}-|S_1|$.
  \end{claimI}
\noindent {\em Proof of Claim \ref{CL:Acap}.} (i)
Notice that
\begin{eqnarray*}
|A\cap \varphi(z)|+|C\cap \varphi(z)|&=&|(A\cup C\cup S)\cap \varphi(z)|-|S_2|\\
  &= &|A\cup C\cup S|+|\varphi(z)|-|(A\cup C\cup S)\cup \varphi(z)|-|S_2|\\
  &\geq & \big[t+t+(\frac{k+1-2t}{2}-\frac{(-1)^t+1}{2})\big]+\frac{k-1}{2} -k-|S_2|\\
  &=& t-\frac{(-1)^t+1}{2}-|S_2|\\
   &\geq & 2(\frac{t+1}{2}-\frac{(-1)^t+1}{4}-|S_2|)-1.
\end{eqnarray*}
Hence (i) holds.

(ii)
Similarly, notice that
\begin{eqnarray*}
|A\setminus\varphi(z)|+|C\setminus \varphi(z)|&=&|(A\cup C\cup S)\setminus\varphi(z)|-|S_1|\\
  &\geq & (2t+\frac{k+1-2t}{2}-\frac{(-1)^t+1}{2})-\frac{k-1}{2}-|S_1|\\
  &=& t+1-\frac{(-1)^t+1}{2}-|S_1|\\
   &\geq & 2(\frac{t+1}{2}-\frac{(-1)^t+1}{4}-|S_1|).
\end{eqnarray*}
Thus (ii) holds.
\q

~

Next, we show that there are certain  subsets of $A$ and $C$ of large size for candidates of $\varphi(v)$.

  \begin{claimI}\label{CL:AC}
There exist $A_1\subseteq A\setminus \varphi(z)$, $A_2\subseteq A\cap \varphi(z)$,  $C_1\subseteq C\setminus \varphi(z)$, $C_2\subseteq C\cap \varphi(z)$ such that $|A_1|+|A_2|=\frac{t-1}{2}+\frac{(-1)^t+1}{4}$,
 $|C_1|+|C_2|=\frac{t-1}{2}+\frac{(-1)^t+1}{4}$,
$|A_1|+|S_1|+|C_1|\geq \frac{t+1}{2}-\frac{(-1)^t+1}{4}$, and
 $|A_2|+|S_2|+|C_2|\geq \frac{t+1}{2}-\frac{(-1)^t+1}{4}$.
  \end{claimI}

\noindent {\em Proof of Claim \ref{CL:AC}.} By Claim \ref{CL:Acap}(i), we may assume without loss of generality that $|A\cap \varphi(z)|\geq \frac{t+1}{2}-\frac{(-1)^t+1}{4}-|S_2|$.


 If $|C\setminus \varphi(z)|\geq  \frac{t+1}{2}-\frac{(-1)^t+1}{4}-|S_1|$, then we can choose $C_1=C\setminus \varphi(z)$ and $C_2\subseteq A\cap \varphi(z)$ such that $|C_1|+|C_2|=\frac{t-1}{2}+\frac{(-1)^t+1}{4}$ and $|C_1|\geq \frac{t+1}{2}-\frac{(-1)^t+1}{4}-|S_1|$.   This is feasible since  $|C|=t\ge \frac{t-1}{2}+\frac{(-1)^t+1}{4}$. By $|A\cap \varphi(z)|\geq \frac{t+1}{2}-\frac{(-1)^t+1}{4}-|S_2|$, we can also choose  $A_1=A\setminus \varphi(z)$ and $A_2\subseteq A\cap \varphi(z)$ such that $|A_1|+|A_2|=\frac{t-1}{2}+\frac{(-1)^t+1}{4}$ and $|A_2|\geq \frac{t+1}{2}-\frac{(-1)^t+1}{4}-|S_2|$. Hence we have $|A_1|+|S_1|+|C_1|\geq |S_1|+|C_1|\ge \frac{t+1}{2}-\frac{(-1)^t+1}{4}$ and
 $|A_2|+|S_2|+|C_2|\geq |A_2|+|S_2|\ge \frac{t+1}{2}-\frac{(-1)^t+1}{4}$.

Assume instead that $|C\setminus \varphi(z)|<\frac{t+1}{2}-\frac{(-1)^t+1}{4}-|S_1|$.
Notice that
\begin{eqnarray*}
|C\cap \varphi(z)|&=&|C|-|C\setminus\varphi(z)|\\
  &\geq & t-(\frac{t+1}{2}-\frac{(-1)^t+1}{4}-|S_1|)\\
  &=& t-\frac{t+1}{2}+\frac{(-1)^t+1}{4}+(|S|-|S_2|)\\
   &=& \frac{t-1}{2}+\frac{(-1)^t+1}{4}+(\frac{k+1-2t}{2}-\frac{(-1)^t+1}{2})-|S_2|\\
   &=& \frac{k-t}{2}-\frac{(-1)^t+1}{4}-|S_2|\\
   &\geq & \frac{t+1}{2}-\frac{(-1)^t+1}{4}-|S_2|.
\end{eqnarray*}
Hence we can choose $C_1=C\setminus \varphi(z)$ and $C_2\subseteq C\cap \varphi(z)$ such that $|C_1|+|C_2|=\frac{t-1}{2}+\frac{(-1)^t+1}{4}$ and $|C_2|\geq \frac{t+1}{2}-\frac{(-1)^t+1}{4}-|S_2|$. By Claim \ref{CL:Acap}(ii) and as $|C\setminus \varphi(z)|<\frac{t+1}{2}-\frac{(-1)^t+1}{4}-|S_1|$,  we have $|A\setminus \varphi(z)|\geq \frac{t+1}{2}-\frac{(-1)^t+1}{4}-|S_1|$. Thus we can select $A_1=A\setminus \varphi(z)$ and $A_2\subseteq A\cap \varphi(z)$ such that $|A_1|+|A_2|=\frac{t-1}{2}+\frac{(-1)^t+1}{4}$ and $|A_1|\geq \frac{t+1}{2}-\frac{(-1)^t+1}{4}-|S_1|$.
Therefore, we have $|A_1|+|S_1|+|C_1|\geq |A_1|+|S_1|\ge \frac{t+1}{2}-\frac{(-1)^t+1}{4}$ and
 $|A_2|+|S_2|+|C_2|\geq |S_2|+|C_2|\ge \frac{t+1}{2}-\frac{(-1)^t+1}{4}$ as desired.\q

~

%
%

~
Now we choose such $A_1\subseteq A\setminus \varphi(z)$, $A_2\subseteq A\cap \varphi(z)$,  $C_1\subseteq C\setminus \varphi(z)$, and $C_2\subseteq C\cap \varphi(z)$
as in Claim \ref{CL:AC}.
Let $\varphi(v)=A_1\cup A_2\cup S_1\cup S_2\cup C_1\cup C_2$. Then $$|\varphi(v)|=(\frac{t-1}{2}+\frac{(-1)^t+1}{4})+(\frac{k+1-2t}{2}-\frac{(-1)^t+1}{2})
+(\frac{t-1}{2}+\frac{(-1)^t+1}{4})=\frac{k-1}{2}.$$  Moreover, $|\varphi(v)\cap \varphi(x)|=|A_1|+|A_2|+|S_1| +|S_2|=\frac{k-1-t}{2}$ if $t$ is even and $|\varphi(v)\cap \varphi(x)|=|A_1|+|A_2|=\frac{t-1}{2}$ if $t$ is odd; $|\varphi(v)\cap \varphi(y)|=|C_1|+|C_2|+|S_1| +|S_2|=\frac{k-1-t}{2}$ if $t$ is even and $|\varphi(v)\cap \varphi(x)|=|C_1|+|C_2|=\frac{t-1}{2}$ if $t$ is odd.

Notice that $(A_1\cup S_1\cup C_1)\subset [k]\setminus \varphi(z)$ and $(A_2\cup S_2\cup C_2)\subset \varphi(z)$. Hence by Claim  \ref{CL:AC} we have
$$\frac{t+1}{2}-\frac{(-1)^t+1}{4}\leq |\varphi(v)\cap \varphi(z)|\leq \frac{k-1}{2}-(\frac{t+1}{2}-\frac{(-1)^t+1}{4}).$$ Since $s+t\geq k$, we have
\[\frac{k-s}{2}-\frac{(-1)^s+1}{4}\le \frac{t+1}{2}-\frac{(-1)^t+1}{4} ~~\text{and}~~
\frac{k-1}{2}-(\frac{t+1}{2}-\frac{(-1)^t+1}{4})\leq \frac{s-1}{2}-\frac{(-1)^s+1}{4},
\] which implies
$$\frac{k-s}{2}-\frac{(-1)^s+1}{4}\leq |\varphi(v)\cap \varphi(z)|\leq \frac{s-1}{2}-\frac{(-1)^s+1}{4}.$$ Thus $B_v(t,s)$ is $\varphi_{\{x,y,z,v\}}$-colorable by Lemmas~~\ref{LEM:necklace-t-small} and \ref{LEM:necklace-t-large}.
\end{proof}

%
%
%
%
%
%
%

\subsection{The proof of Theorem~\ref{THM:main-fraction-k}}
\label{subsect-pf1.6}

Now we are ready to prove Theorem~\ref{THM:main-fraction-k} restated below in terms of plane graph.

~

\noindent{\bf Theorem~\ref{THM:main-fraction-k}.} {\em
 For any odd integer $k\ge 5$, every plane graph of girth at least $k$ without cycles of length from $k+1$ to $\lfloor\frac{22}{3}k\rfloor$  is fractional  $(k:\frac{k-1}{2})$-colorable.
}

\begin{proof}
Suppose, for a contradiction, that $G$ is a counterexample with $|V(G)|+|E(G)|$ minimized.

 \begin{claim}\label{CL:2-conected-f}
  $G$ is $2$-connected. In particular, $\delta(G)\geq 2$.
  \end{claim}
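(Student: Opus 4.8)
The plan is to prove Claim~\ref{CL:2-conected-f} by the standard minimal-counterexample reduction, mirroring the proof of Claim~\ref{CL:degree}(i) in the circular setting but adapted to fractional $(k:\frac{k-1}{2})$-coloring. Suppose $G$ is not $2$-connected (the case of an isolated low-degree vertex is subsumed, since a vertex of degree $0$ or $1$ is trivially extendable). Then $G$ has a cut vertex, so we may write $E(G)=E(G_1)\cup E(G_2)$ with $V(G_1)\cap V(G_2)=\{v\}$ for proper induced subgraphs $G_1,G_2$, each having fewer vertices-plus-edges than $G$. Each $G_i$ is still a plane graph of girth at least $k$ with no cycle of length in $[k+1,\lfloor\frac{22}{3}k\rfloor]$, so by minimality each $G_i$ admits a fractional $(k:\frac{k-1}{2})$-coloring.

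The only obstacle is that the colorings of $G_1$ and $G_2$ may disagree at the cut vertex $v$; we need to reconcile them. The key observation is that the fractional $(k:\frac{k-1}{2})$-coloring is symmetric under the action of the symmetric group $S_k$ on the color set $[k]$: if $\varphi$ is a valid coloring of $G_i$ and $\pi\in S_k$ is any permutation, then $\pi\circ\varphi$ (applying $\pi$ to each color in each vertex's set) is also a valid coloring. So let $\varphi_1$ be a coloring of $G_1$ and $\varphi_2$ a coloring of $G_2$; since $|\varphi_1(v)|=|\varphi_2(v)|=\frac{k-1}{2}$, there is a permutation $\pi\in S_k$ with $\pi(\varphi_2(v))=\varphi_1(v)$. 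Replacing $\varphi_2$ by $\pi\circ\varphi_2$, we may assume $\varphi_1(v)=\varphi_2(v)$, and then $\varphi_1\cup\varphi_2$ is a well-defined fractional $(k:\frac{k-1}{2})$-coloring of $G$ (the only shared vertex is $v$, and the colorings agree there; every edge of $G$ lies in $G_1$ or in $G_2$, where the relevant disjointness constraint already holds). This contradicts the choice of $G$, so $G$ is $2$-connected. In particular, every vertex has degree at least $2$, establishing $\delta(G)\geq 2$.

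I expect this step to be essentially routine: the color-symmetry argument is the natural fractional analogue of the precoloring-extension trick used in Claim~\ref{CL:degree}(i), and no delicate combinatorics (no Cauchy--Davenport-type counting, no necklace lemmas) is needed here. The one point requiring a line of care is handling the degenerate cases: if $G$ is disconnected one works component by component, and if $G$ has a vertex of degree $\le 1$ one deletes it, colors the rest by minimality, and greedily assigns any $\frac{k-1}{2}$-subset avoiding the (at most one) forbidden set — which is possible since $k-\frac{k-1}{2}=\frac{k+1}{2}\ge\frac{k-1}{2}$. Thus the real content of the section begins only with the subsequent claims about threads and necklaces, which invoke the precoloring lemmas of Subsection~\ref{subsect-precoloring}.
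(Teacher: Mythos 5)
Your proposal is correct and follows essentially the same argument as the paper: split at the cut vertex, color each piece by minimality, and relabel colors so the two colorings agree at $v$ (the paper's "exchange the colors if needed" is precisely your permutation step). The only difference is cosmetic — you spell out the $S_k$-symmetry and the degenerate low-degree cases explicitly, whereas the paper treats them as obvious.
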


\noindent {\em Proof of Claim \ref{CL:2-conected-f}.}
Clearly, $G$ is connected. If $G$ is not $2$-connected, then there exist proper induced subgraphs $G_1$ and $G_2$ of $G$ and a vertex
$v\in V(G_2)$ such that $E(G) = E(G_1) \cup E(G_2)$ and $V(G_1) \cap V(G_2) = \{v\}$. By the minimality of $G$, $G_1$ has a fractional  $(k:\frac{k-1}{2})$-coloring $\varphi_1$ and $G_2$ has a fractional  $(k:\frac{k-1}{2})$-coloring $\varphi_2$. Exchange the colors if needed  such that $\varphi_1(v) =\varphi_2(v)$, then $\varphi_1$ and $\varphi_2$ combine to become a
$(k:\frac{k-1}{2})$-coloring of $G$, which is a contradiction.
\q

%

For $1\leq t\leq \frac{k-1}{2}$, let $F_v(t,s)$ be a graph obtained from a bull-necklace $B_v(t,s)$ with end vertices $x,y,z$ by joining a new $(x,y)$-path of length $k-2t$ connecting $x$ and $y$, where the vertices in the new $(x,y)$-path may have arbitrary degrees in $G$. That is,  $F_v(t,s)$ consists of a $k$-cycle $C_v$ and a necklace $N(v,z)$ with a common vertex $v$, where in the $k$-cycle $C_v$ there exist two $(t-1)$-threads, one is from $x$ to $v$ and the other is from $y$ to $v$.

 \begin{claim}\label{CL:F(t-s)}
 $G$ contains no $F_v(t,s)$ with $1\leq t\leq \frac{k-1}{2}$ and $t+s\geq k$, where $s=d(v,z)$.
  \end{claim}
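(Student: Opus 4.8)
\textbf{Proof proposal for Claim \ref{CL:F(t-s)}.}

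The plan is to argue by contradiction using the minimality of $G$, exactly in the spirit of the reducibility arguments in Subsection \ref{subsectCp}. Suppose $G$ contains a subgraph $F_v(t,s)$ with $1\le t\le \frac{k-1}{2}$ and $t+s\ge k$. Let $W$ denote the set of ``attachment'' vertices of this configuration that are permitted to have large degree in $G$; concretely, $W$ should consist of $z$ together with the internal vertices of the new $(x,y)$-path of length $k-2t$ (and $x,y$ themselves, which lie on both that path and the $k$-cycle $C_v$). Every other vertex of $F_v(t,s)$ — namely $v$, the internal vertices of the two $(t-1)$-threads joining $x$ and $y$ to $v$ inside $C_v$, and the internal structure of the necklace $N(v,z)$ — has degree exactly $2$ in $G$ by the definition of threads/necklaces, hence is not shared with the rest of $G$. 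First I would delete all of $V(F_v(t,s))\setminus W$ from $G$ to obtain a proper subgraph $G'$; I should check that $G'$ still has girth at least $k$ and no cycles of length in $[k+1,\lfloor\frac{22}{3}k\rfloor]$ (it is a subgraph of $G$, so this is immediate), so by minimality $G'$ admits a fractional $(k:\frac{k-1}{2})$-coloring $\varphi$.

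The next step is to extend $\varphi$ to all of $F_v(t,s)$. The restriction of $\varphi$ to $W$ gives in particular a fractional coloring of the $(x,y)$-path of length $k-2t$ that was left in $G'$; this path has $k-2t+1$ vertices, which is odd, so Lemma \ref{LEM:path-p} yields $|\varphi(x)\cap\varphi(y)|\ge \frac{k-(k-2t+1)}{2}+\cdots$ — more precisely, applying Lemma \ref{LEM:path-p} with $t'=k-2t+1$ odd gives $|\varphi(x)\cap\varphi(y)|\ge \frac{k-(k-2t+1)}{2}=\frac{2t-1}{2}$, i.e. $|\varphi(x)\cap\varphi(y)|\ge \frac{k-1-2t}{2}$ after accounting for the integrality ($|\varphi(x)\cap\varphi(y)|$ is an integer and the bound is $\frac{2t-1}{2}$, so in fact $|\varphi(x)\cap\varphi(y)|\ge t$, but one should double-check whether the configuration forces \emph{exactly} $\frac{k-1-2t}{2}$ or merely at least that). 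The clean way is: if $|\varphi(x)\cap\varphi(y)|=\frac{k-1-2t}{2}$ exactly, then Lemma \ref{Fr-3-necklace} applies verbatim to the bull-necklace $B_v(t,s)$ with the precoloring $\varphi$ of $\{x,y,z\}$ (using $1\le t\le\frac{k-1}{2}$ and $t+s\ge k$), giving an extension to $v$ and the necklace, which together with the already-colored $(x,y)$-path completes a fractional $(k:\frac{k-1}{2})$-coloring of $G$ — contradiction. So the technical point to nail down is the hypothesis of Lemma \ref{Fr-3-necklace}.

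The main obstacle I anticipate is precisely reconciling the ``$=$'' in the hypothesis of Lemma \ref{Fr-3-necklace} with the ``$\ge$'' one naturally gets from Lemma \ref{LEM:path-p}: one must argue either (a) that one may freely \emph{shrink} $\varphi(x)\cap\varphi(y)$ down to exactly $\frac{k-1-2t}{2}$ by recoloring the short $(x,y)$-path (i.e. that a path of length $k-2t$ between two precolored endpoints is $\varphi$-colorable as long as $|\varphi(x)\cap\varphi(y)|\ge \frac{k-1-2t}{2}$, which follows by the same inductive coloring scheme as in Lemma \ref{LEM:path-p}/Lemma \ref{LEM:cyclep}), or (b) that Lemma \ref{Fr-3-necklace} in fact holds with ``$\ge$'' in place of ``$=$'' (inspecting its proof, the only place the size of $\varphi(x)\cap\varphi(y)$ enters is in the cardinalities $|A|=|C|=t$, $|B|=\frac{k-1-2t}{2}$, $|D|=\frac{k+1-2t}{2}$, and a larger $B$ only helps). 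I would go with route (a): first recolor the vertices of the new $(x,y)$-path so that the colorings of $x$ and $y$ are adjusted to satisfy $|\varphi(x)\cap\varphi(y)|=\frac{k-1-2t}{2}$ while keeping all colorings outside $F_v(t,s)$ untouched — note $x,y$ themselves may have other neighbors in $G'$, so instead I would \emph{not} recolor $x,y$ but rather observe that $|\varphi(x)\cap\varphi(y)|\ge\frac{k-1-2t}{2}$ is what we have, pass to a sub-precoloring by the monotonicity remark (b), and apply Lemma \ref{Fr-3-necklace}. Either way the conclusion is that $\varphi$ extends to $F_v(t,s)$ and hence to $G$, the desired contradiction. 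The remaining routine checks are the girth/cycle-length conditions on $G'$ (trivial) and the arithmetic that $k-2t+1$ is odd and positive (immediate from $1\le t\le\frac{k-1}{2}$).
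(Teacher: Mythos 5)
Your decomposition differs from the paper's in a way that breaks the argument. You propose to delete all of $V(F_v(t,s))\setminus W$ where $W$ contains only $z$ and the vertices of the short $(x,y)$-path of $C_v$; thus $v$, the internal vertices of the two $(t-1)$-threads, and the necklace internals are all removed, and only one arc of $C_v$ survives in $G'$. The paper instead sets $G'=G-(V(N(v,z))\setminus\{v,z\})$, so the entire $k$-cycle $C_v$ remains intact in $G'$. This matters because Lemma \ref{Fr-3-necklace} requires $|\varphi(x)\cap\varphi(y)|$ to \emph{equal} $\frac{k-1-2t}{2}$, and with $C_v\subseteq G'$ that equality comes for free from Lemma \ref{LEM:cyclep}. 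With your $G'$ the only structure constraining $\varphi(x)$ and $\varphi(y)$ is the length-$(k-2t)$ path, which has $k-2t+1$ vertices; since $k$ is odd this count is \emph{even}, not odd as you wrote, so Lemma \ref{LEM:path-p} gives the \emph{upper} bound $|\varphi(x)\cap\varphi(y)|\le\frac{(k-2t+1)-2}{2}=\frac{k-1-2t}{2}$, not a lower bound (and $\frac{2t-1}{2}\neq\frac{k-1-2t}{2}$ in any case, so the arithmetic step is also off).

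Both repair routes then collapse. If the coloring of your $G'$ happens to give $|\varphi(x)\cap\varphi(y)|<\frac{k-1-2t}{2}$, then Lemma \ref{LEM:path-p} applied to the prospective length-$2t$ path $x\cdots v\cdots y$ (with $2t+1$ vertices, odd) shows any valid coloring of that path forces $|\varphi(x)\cap\varphi(y)|\ge\frac{k-1-2t}{2}$, so the two $(t-1)$-threads cannot be colored at all; you never even reach Lemma \ref{Fr-3-necklace}. And route (b), the claimed monotonicity of Lemma \ref{Fr-3-necklace} in $|\varphi(x)\cap\varphi(y)|$, is not established: its proof sets $|A|=|C|=t$ exactly, and the construction of $\varphi(v)$ with $|\varphi(v)|=\frac{k-1}{2}$ relies on $|S|$ having a specific value tied to $|B|=\frac{k-1-2t}{2}$. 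The fix is precisely the paper's: delete only $V(N(v,z))\setminus\{v,z\}$, keeping $C_v$ intact, so Lemma \ref{LEM:cyclep} forces the exact equality and Lemma \ref{Fr-3-necklace} applies directly to $B_v(t,s)$ under the precoloring of $\{x,y,z\}$. (A minor side point: $v$ has degree $3$ in $F_v(t,s)$, being the center of the bull-necklace, so your justification that $v$ has degree exactly $2$ is incorrect, though it does not change which vertices get deleted.)
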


\noindent {\em Proof of Claim \ref{CL:F(t-s)}.} Suppose to the contrary that $G$ contains an $F_v(t,s)$ with $1\leq t\leq \frac{k-1}{2}$ and $t+s\geq k$. By the minimality of $G$, $G-(V(N(v,z))\setminus \{v,z\})$ has a $(k:\frac{k-1}{2})$-coloring $\varphi$.  If $2t\leq \frac{k-1}{2}$, then $d_G(x,y)=2t$, and if $2t\geq \frac{k+1}{2}$, then $d_G(x,y)=k-2t$.
By Lemma \ref{LEM:cyclep}, we always have $|\varphi(x)\cap\varphi(y)|=\frac{k-1-2t}{2}$. Let $\varphi'$ be the restriction of $\varphi$ to $G-(V(B_v(t,s))\setminus \{x,y,z\})$. As $|\varphi'(x)\cap\varphi'(y)|=\frac{k-1-2t}{2}$,    $B_v(t,s)$ is $\varphi'_{\{x,y,z\}}$-colorable by Lemma \ref{Fr-3-necklace}. That is, $\varphi'$ can be extended to a $(k:\frac{k-1}{2})$-coloring of $G$, which is a contradiction. \q

%

~~

From $G$, we obtain a subgraph $G'$ as follows: {\em for each facial $k$-cycle $C$ of $G$, if there exists a  $2$-vertex in $C$, then we
delete all the $2$-vertices of a longest thread of $C$.}
Clearly, the obtained graph $G'$ is a plane graph of girth at least $k$, and contains no cycles of length from $k+1$ to $\lfloor\frac{22k}{3}\rfloor$; furthermore, each facial $k$-cycle of $G'$ contains no $2$-vertices.  It is easy to see that $G'$ has minimal degree at least $2$ by its construction.

Let $T(v,x)$ be a $(v,x)$-thread of $G'$ and let $u=N_{G'}(v)\cap V(T(v,x))$. If there exists $w\in N_{G'}(v)\setminus \{u\}$ such that $vu$ and $vw$ are in a common $k$-cycle of $G$, then we say $v$ is a {\em bad end vertex} of $T(v,x)$; otherwise,  $v$ is called a {\em good end vertex} of $T(v,x)$.

  \begin{claim}\label{CL:nothread-good-end}
Let $T(v,x)$ be a $(v,x)$-thread of $G'$ with  a good end vertex  $v$. Then $d_{G'}(v,x)\leq k-1$.
  \end{claim}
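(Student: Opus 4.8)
The plan is to argue by contradiction: assume $d_{G'}(v,x)=\ell\ge k$ and write $T(v,x)=vw_1w_2\cdots w_{\ell-1}x$, so that $u:=w_1$ and $w_1,\dots,w_{\ell-1}$ are $2$-vertices of $G'$. The aim is to reconstruct from this thread a long necklace inside the minimal counterexample $G$ and then contradict the minimality of $|V(G)|+|E(G)|$ using Corollary~\ref{Cr-precoloring-f}; the hypothesis that $v$ is a good end vertex is what makes the reconstruction behave well at the $v$-end, and Claim~\ref{CL:F(t-s)} rules out a borderline case.

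First I would record how $T(v,x)$ lives inside $G$. Because $G'$ is obtained from $G$ only by deleting the $2$-vertices of a longest thread of each facial $k$-cycle, every edge $w_iw_{i+1}$ is an edge of $G$, and each internal $w_i$ is either a genuine $2$-vertex of $G$ or an endpoint of a longest thread $T_i^{*}$ of a facial $k$-cycle $C_i$ of $G$ that got trimmed; in the latter case the surviving path $P_i=C_i-T_i^{*}$ appears as a sub-path of the $G'$-thread. Two facial $k$-cycles of $G$ cannot share an edge, since that would create a cycle of length $2k-2\in[k+1,\lfloor\frac{22k}{3}\rfloor]$; using this I would show the thread cannot pass through an attachment-vertex of $C_i$ other than the two ends of $T_i^{*}$ (such a vertex would be a $2$-vertex of $G'$ incident to two surviving $C_i$-edges and a deleted edge, forcing two facial $k$-cycles to share an edge). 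Hence each $C_i$ traversed by $T(v,x)$ has exactly two attachment-vertices, $P_i$ is a single thread of $C_i$ and, being \emph{not} the longest thread, is strictly shorter than $T_i^{*}$, so it realises the distance between its ends along $C_i$. It follows that $T(v,x)$ is, as a subgraph of $G$, a necklace $N$ from $v$ to $x$ with internal distance $d_N(v,x)=d_{G'}(v,x)=\ell\ge k$.

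Next I would handle the endpoints. If the edge $vu$ lies on no facial $k$-cycle of $G$, then $v$ is a genuine end of $N$. If $vu$ does lie on a facial $k$-cycle $C$ of $G$, then $C$ meets $v$ in a second edge $vw'$; since $v$ is a good end vertex, $w'\notin N_{G'}(v)$, so $w'$ was deleted, and by the no-shared-edge fact this forces $C$ to be the trimmed cycle whose longest thread is incident to $v$ — and since $|C-T_C^{*}|\le k-2<k=d_{G'}(v,x)$, the cycle $C$ has exactly two attachment-vertices and $C\cup N$ is a necklace $N'$ from the opposite attachment-vertex of $C$ to $x$ with $d_{N'}\ge k$. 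The $x$-end is symmetric, or — if $x$ is a bad end vertex — only adds a terminal $k$-cycle that does not decrease the internal distance. In every case $G$ contains a necklace $\widetilde N$ with two prescribed ends and internal distance $\ge k$; by minimality the graph obtained from $G$ by deleting the interior vertices of $\widetilde N$ has a fractional $(k:\frac{k-1}{2})$-coloring, and by Corollary~\ref{Cr-precoloring-f} this coloring extends over $\widetilde N$ to a fractional $(k:\frac{k-1}{2})$-coloring of $G$, contradicting that $G$ is a counterexample. (Should the borderline configuration near $v$ instead resemble a bull-necklace meeting a $k$-cycle at $v$, Claim~\ref{CL:F(t-s)} applies in place of Corollary~\ref{Cr-precoloring-f}.)

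The hard part will be the structural bookkeeping in the middle paragraph: cleanly classifying the internal vertices $w_i$, and proving via the forbidden-cycle hypothesis that every trimmed $k$-cycle met by the thread has exactly two attachment-vertices with $P_i$ strictly shorter than $T_i^{*}$, so that passing from $G'$ to $G$ does not shrink the relevant distance below $k$; the other delicate point is turning ``good end vertex'' into the statement that $vu$ is not absorbed into a $k$-cycle, and identifying the one exceptional shape so that it is covered by Claim~\ref{CL:F(t-s)}.
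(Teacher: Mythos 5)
Your overall plan — assume $d_{G'}(v,x)\geq k$, lift $T(v,x)$ back into $G$ and contradict minimality via Corollary~\ref{Cr-precoloring-f}, with Claim~\ref{CL:F(t-s)} held in reserve — matches the paper's strategy, and your treatment of the case when $x$ is also a good end vertex is essentially the paper's Case~1. The genuine gap is in the case when $x$ is a bad end vertex, which you dismiss with the remark that $x$ ``only adds a terminal $k$-cycle that does not decrease the internal distance,'' still producing a necklace $\widetilde N$ with $d_{\widetilde N}\geq k$ to which Corollary~\ref{Cr-precoloring-f} applies. That cannot work. When $x$ is bad, the relevant facial $k$-cycle $C_x$ contains, as attachment-vertices, the vertex $x$ itself (with its third $G'$-edge $xz$), the vertex $w$ where $T(v,x)$ leaves $C_x$ toward $v$, and the other endpoint $u$ of the deleted thread of $C_x$ (one checks $u\neq x$, since both $xy$ and $xz$ survive in $G'$). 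Because $u$ is an attachment-vertex with edges into the rest of $G$, the interior of any would-be necklace containing all of $C_x$ is not a set of $2$-vertices of $G$, so the precoloring-extension machinery of Lemmas~\ref{LEM:necklace-t-small}--\ref{LEM:necklace-t-large} and Corollary~\ref{Cr-precoloring-f} simply does not apply. Worse, the part of $T(v,x)$ that genuinely does lift to a necklace is only the $(w,v)$-portion, and since $d(w,x)\le\frac{k-1}{2}$ (the deleted $(w,u)$-thread is longest), one only gets $d(w,v)=d_{G'}(v,x)-d(w,x)$, which may be well below $k$; so the $d\ge k$ hypothesis of Corollary~\ref{Cr-precoloring-f} is lost. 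The paper instead observes that $w$, together with the $(w,x)$-segment of $C_x$, the deleted $(w,u)$-thread, and the $(w,v)$-necklace, forms an $F_w\big(d(w,x),\,d(w,v)\big)$ configuration, and the crucial inequality $d(w,x)+d(w,v)=d_{G'}(v,x)\ge k$ is exactly what Claim~\ref{CL:F(t-s)} forbids — this is where Claim~\ref{CL:F(t-s)} is actually needed, not, as your parenthetical suggests, at the good end $v$.

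Two smaller points. First, your claim that $|C-T_C^{*}|\le k-2<d_{G'}(v,x)$ forces $C$ to have exactly two attachment-vertices does not follow: that inequality only tells you $T(v,x)$ exits $C$ at the far endpoint of the deleted thread, not that the surviving path of $C$ carries no intermediate attachment-vertices of $C$ (such intermediate vertices can have degree $2$ in $G'$ if their third $G$-edge was trimmed by another facial $k$-cycle). Second, even at the good end $v$, lifting $T(v,x)$ to a necklace with $d_G(v,x)=d_{G'}(v,x)$ is asserted rather than proved in both your write-up and the paper's Case~1; it relies on the observation that trimmed $k$-cycles met in the middle of $T(v,x)$ are crossed from one endpoint of the deleted thread to the other, and that the cycle distance contributed by the re-inserted $k$-cycle equals the length of the surviving arc, which in turn uses that the deleted thread is the longest one. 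Your middle paragraph gestures at this bookkeeping but is not careful enough to rule out intermediate attachment-vertices on the surviving arc shortening the lifted necklace distance.
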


\noindent {\em Proof of Claim \ref{CL:nothread-good-end}.}   Suppose to the contrary that $d_{G'}(v,x)\geq k$. If $x$ is also a good end vertex of $T(v,x)$, then the thread $T(v,x)$ in $G'$ corresponds to a necklace $H$ with end vertices ${v,x}$ in $G$.
By the minimality of $G$, $G-(V(H)\setminus \{v,x\})$ has a $(k:\frac{k-1}{2})$-coloring $\varphi$. Since $d_G(v,x)=d_{G'}(v,x)\geq k$ by construction, $H$ is $\varphi_{\{v,x\}}$-colorable by Corollary \ref{Cr-precoloring-f}. That is, $\varphi$ can be extended to a $(k:\frac{k-1}{2})$-coloring of $G$, which is a contradiction.

 Therefore we assume that $x$ is a bad end vertex of $T(v,x)$. By definition, let $y=N_{G'}(x)\cap V(T(v,x))$ such that there exists a $k$-cycle $C_x$ of $G$ containing both $xy$ and $xz$ for some
 $z\in N_{G'}(x)\setminus \{y\}$.
Let $w\in V(C_x)\cap V(T(v,x))$ such that $d_G(x,w)$ as large as possible.
By the construction of $G'$, we obtain that the $(x,w)$-thread  in $G$ satisfies $d(x,w)\leq \frac{k-1}{2}$, and that there is a deleted thread from $w$ to some vertex, say $(w,u)$-thread, in the $k$-cycle $C_x$ such that $d(u,w)\ge d(x,w)$. Thus
$G$ contains a bull-necklace $B_w(d(w,x),d(w,v))$, which provides an $F_w(d(w,x),d(w,v))$ in $G$,  contradicting to Claim \ref{CL:F(t-s)}.
\q

  \begin{claim}\label{CL:nothread-f}
  $G'$ contains no  $(\frac{3k-3}{2})^+$-thread.
  \end{claim}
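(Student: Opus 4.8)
$G'$ contains no $(\frac{3k-3}{2})^+$-thread.

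**Proof proposal.** The plan is to argue by contradiction: suppose $G'$ contains a thread $T(v,x)$ with at least $\frac{3k-3}{2}$ internal $2$-vertices, and derive a forbidden configuration. First I would examine the two end vertices $v$ and $x$ of this thread. By Claim~\ref{CL:nothread-good-end}, if an end vertex is \emph{good} then the associated portion of the thread measured from that vertex has length at most $k-1$; in particular a thread both of whose ends are good has length at most $2(k-1)$, which does not immediately contradict $\frac{3k-3}{2}$, so the real content must come from the \emph{bad} end case. So the first step is to observe that if both $v$ and $x$ were good, then since $d_{G'}(v,x)\le k-1$ we would be done (as $k-1<\frac{3k-3}{2}$ for $k\ge 5$); hence at least one end, say $x$, is a bad end vertex.

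Next, following the mechanism in the proof of Claim~\ref{CL:nothread-good-end}: since $x$ is a bad end vertex there is a $k$-cycle $C_x$ of $G$ through the edge $xy$ (where $y$ is the neighbour of $x$ on $T(v,x)$) and another edge $xz$ at $x$. Let $w$ be the vertex of $V(C_x)\cap V(T(v,x))$ farthest from $x$; by the construction of $G'$ (we deleted the $2$-vertices of a \emph{longest} thread of each facial $k$-cycle) we get $d(x,w)\le \frac{k-1}{2}$, and the portion of $C_x$ that was removed is a thread of length $\ge d(x,w)$ attached at $w$. This exhibits a bull-necklace $B_w(d(w,x),\,d(w,v))$ inside $G$, giving an $F_w(d(w,x),d(w,v))$. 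To invoke Claim~\ref{CL:F(t-s)} we must check the hypotheses $1\le t\le\frac{k-1}{2}$ and $t+s\ge k$ with $t=d(w,x)$ and $s=d(w,v)$. The bound $t\le\frac{k-1}{2}$ is exactly what the previous paragraph gives; for $t+s\ge k$ we use that the full thread has length $\ge\frac{3k-3}{2}$, so $s=d(w,v)\ge d(x,v)-d(x,w)\ge \frac{3k-3}{2}-\frac{k-1}{2}=k-1$, whence $t+s\ge 1+(k-1)=k$ provided $t\ge 1$ — and $t\ge 1$ holds because $w\ne x$ (the cycle $C_x$ meets the thread in at least the vertex $y$). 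This contradicts Claim~\ref{CL:F(t-s)} and completes the proof.

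The step I expect to be the main obstacle is the careful bookkeeping in the bad-end-vertex case: one must make sure that $w$ is genuinely an internal $2$-vertex of the thread (so that the bull-necklace really sits inside $G$ as required by Claim~\ref{CL:F(t-s)}), that the two $(t-1)$-threads emanating from $w$ in $C_x$ and the residual necklace $N(w,v)$ are all present and vertex-disjoint apart from $w$, and that the length estimate $s\ge k-1$ is not spoiled if $v$ itself happens to be a bad end vertex (in which case one should instead run the same cycle-analysis at the $v$-end, or simply note that the $(w,v)$-sub-thread still has length at least $\frac{3k-3}{2}-\frac{k-1}{2}$ regardless of what happens at $v$). Handling the symmetric possibility that it is $v$ rather than $x$ that is bad, and the possibility that both ends are bad, requires only minor notational adjustments once the one-bad-end case is settled.
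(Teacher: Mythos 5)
Your proof is correct but takes a genuinely different route from the paper's. The paper first notes that by the contrapositive of Claim~\ref{CL:nothread-good-end} \emph{both} $v$ and $x$ must be bad end vertices, then uses the bad end at $v$ to locate the farthest vertex $y\in V(C_v)\cap V(T(v,x))$; since $d(v,y)\le\frac{k-1}{2}$ this gives $d(x,y)\ge k$, and because the edge from $y$ towards $x$ leaves $C_v$ while two $k$-cycles cannot share an edge, $y$ is a \emph{good} end vertex of the residual $(x,y)$-thread, contradicting Claim~\ref{CL:nothread-good-end} a second time. So the paper's proof never invokes Claim~\ref{CL:F(t-s)} directly here. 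Your proof instead inlines the bad-end-vertex analysis from the proof of Claim~\ref{CL:nothread-good-end}: you relocate $w$ on $C_x$, bound $d(x,w)\le\frac{k-1}{2}$, and apply Claim~\ref{CL:F(t-s)} directly to the resulting $F_w(d(w,x),d(w,v))$. Both routes are valid; the paper's is a slicker reuse of Claim~\ref{CL:nothread-good-end}, while yours is more self-contained at the cost of re-running the bull-necklace construction. One small numerical slip in your version: a $(\frac{3k-3}{2})^+$-thread has at least $\frac{3k-3}{2}$ \emph{internal} vertices, hence length (edge count) at least $\frac{3k-1}{2}$, which already gives $s=d(w,v)\ge\frac{3k-1}{2}-\frac{k-1}{2}=k$, and indeed $t+s=d(x,v)\ge\frac{3k-1}{2}\ge k$ outright, so there is no need to track $t\ge 1$ separately; your weaker bound $s\ge k-1$ still suffices, so the conclusion is unaffected.
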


\noindent {\em Proof of Claim \ref{CL:nothread-f}.}
Suppose to the contrary that $G'$ has a $(\frac{3k-3}{2})^+$-thread $T(v,x)$. Then $d_{G}(v,x)=d_{G'}(v,x)\geq \frac{3k-1}{2}$. By Claim \ref{CL:nothread-good-end},  $v$ and $x$ are both bad end vertices of $T(v,x)$. Let $u$ be the neighbor of $v$ in $T(v,x)$. Then there exists a $k$-cycle $C_v$ of $G$ containing both $vu$ and $vw$ for some
 $w\in N_{G'}(v)\setminus \{u\}$.
Let $y\in V(C_v)\cap V(T(v,x))$ such that $d_G(v,y)$ is as large as possible.
By the construction of $G'$, we have $d(v,y)\leq \frac{k-1}{2}$, and so $d(x,y)=d(v,x)-d(v,y) \geq k$. Now $T(v,x)-(V(T(v,x))\cap V(C_v)\setminus \{y\})$ is an $(x,y)$-thread from $x$ to $y$ in $G'$  with $y$ being a good end vertex, which is a contradiction to Claim \ref{CL:nothread-good-end}.
\q

%
%
%

 \begin{claim}\label{CL:no-3-thread-f}
  $G'$ contains no  $(k_1,k_2,k_3)$-thread such that $k_1+k_2+k_3\geq \frac{11k-17}{3}$.
  \end{claim}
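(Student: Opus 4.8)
\textbf{Proof plan for Claim~\ref{CL:no-3-thread-f}.}
The plan is to argue by contradiction: suppose $G'$ contains a $(k_1,k_2,k_3)$-thread $T_v$ centered at a vertex $v$, with end vertices $y_1,y_2,y_3$, and with $k_1+k_2+k_3\ge \frac{11k-17}{3}$. By Claim~\ref{CL:nothread-f} each $k_i\le \frac{3k-5}{2}$ (since a single thread of $T_v$ of length $k_i+1$ is a thread of $G'$, its number of internal vertices $k_i$ must be at most $\frac{3k-3}{2}-1$). Combined with the lower bound on the sum, this forces each $k_i$ to be reasonably large; in particular the two shortest of the three, say with $k_1\le k_2\le k_3$, satisfy $k_1+k_2\ge \frac{11k-17}{3}-\frac{3k-5}{2}$, which should comfortably exceed $k$ (a routine arithmetic check). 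The idea is then to use the center vertex $v$ together with two of the three threads to build a reducible configuration $F_w(t,s)$ forbidden by Claim~\ref{CL:F(t-s)}, exactly as Claim~\ref{CL:nothread-f} was deduced from Claim~\ref{CL:F(t-s)} via the $G\to G'$ passage.

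The key steps, in order, are: (1) record the per-thread bound $k_i\le\frac{3k-3}{2}-1$ from Claim~\ref{CL:nothread-f}, and deduce from $\sum k_i\ge\frac{11k-17}{3}$ that no $k_i$ can be too small — concretely that $k_1,k_2$ are large enough that the path $y_1\cdots v\cdots y_2$ in $G'$ has length $k_1+k_2+2\ge$ something like $\frac{3k-1}{2}$, so this $(y_1,y_2)$-thread is a $(\frac{3k-3}{2})^+$-thread and hence (by the proof of Claim~\ref{CL:nothread-f}, or re-running its argument) both $y_1,y_2$ are bad end vertices and $v$ is "sandwiched"; (2) if $v$ is a bad end vertex of (some thread of) $T_v$ in $G'$, or if one uses that in $G$ some thread-edge at $v$ lies on a $k$-cycle $C_v$, locate the far vertex $w\in V(C_v)$ along one of the threads with $d(v,w)$ maximal, so that $d(v,w)\le\frac{k-1}{2}$ by the construction of $G'$ and $C_v$ provides a deleted companion thread $(w,u)$-thread with $d(u,w)\ge d(v,w)$; (3) observe that $w$ together with this companion thread and the remaining part of $T_v$ yields an $F_w(t,s)$ with $t=d(w,\cdot)\le\frac{k-1}{2}$ and $t+s\ge k$, the inequality $t+s\ge k$ coming from the fact that $s$ reaches out along a thread of residual length $\ge k-\frac{k-1}{2}$; (4) conclude the contradiction with Claim~\ref{CL:F(t-s)}. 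In the alternative branch where $v$ is a good end vertex of each incident thread but two threads are long, one instead isolates an honest $(x,y)$-necklace of distance $\ge k$ inside $G$ and contradicts Corollary~\ref{Cr-precoloring-f}; the bookkeeping for which branch occurs is parallel to the proof of Claim~\ref{CL:nothread-f}.

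The main obstacle I expect is purely arithmetic bookkeeping rather than any new idea: one must choose the threshold $\frac{11k-17}{3}$ precisely so that, after subtracting at most one maximal thread (length up to $\frac{3k-3}{2}$) and at most one "absorbed" segment of length $\le\frac{k-1}{2}$ coming from a $k$-cycle at the center, what remains still has length $\ge k$ to feed into Claim~\ref{CL:F(t-s)} or Corollary~\ref{Cr-precoloring-f}. Getting the floors, the parity of $k$, and the $+$/$-$ constants to line up — especially in the worst case where $v$ is bad and a $k$-cycle $C_v$ eats into \emph{two} of the three threads — is where care is needed; I would handle it by splitting on how many of $k_1,k_2,k_3$ are "short" (below $\frac{k-1}{2}$, say) versus "long", and checking the handful of resulting cases. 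No step should require any tool beyond Claims~\ref{CL:2-conected-f}--\ref{CL:nothread-f}, Claim~\ref{CL:F(t-s)}, Corollary~\ref{Cr-precoloring-f}, and the definition of the auxiliary graph $G'$.
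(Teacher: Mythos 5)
Your plan is aimed in the right direction (contradiction via the $G\to G'$ passage, ending in Claim~\ref{CL:F(t-s)} or Corollary~\ref{Cr-precoloring-f}), but there is a genuine error in step (1) and the numerical engine that makes the paper's argument close is missing.

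In step (1) you treat the concatenation $y_1\cdots v\cdots y_2$ as a thread of $G'$ and then invoke Claim~\ref{CL:nothread-f}. It is not a thread: $v$ is an interior vertex of that path and has degree at least $3$ in $G'$, so Claim~\ref{CL:nothread-f} says nothing about it, and the conclusion that ``$y_1,y_2$ are bad end vertices and $v$ is sandwiched'' does not follow. The paper never forms such a through-$v$ path; instead it works one branch at a time via Claim~\ref{CL:nothread-good-end} (``a thread with a good end vertex has length at most $k-1$''), which you never invoke. That lemma is the workhorse here: it is applied once to the branch that $C_v$ does not touch, giving $d_{G'}(v,z)\le k-1$, and then again at the far attachment-vertex $u$ or $w$ to reach the contradiction. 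By bypassing it you are forced to re-derive the necklace/bull-necklace reductions from scratch, and your outline does not actually carry that out in the branch where the chosen endpoint of the long segment is bad.

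Second, your bookkeeping uses the per-arc bound $d(v,u),d(v,w)\le\frac{k-1}{2}$, giving $d(v,u)+d(v,w)\le k-1$. That is not enough: even using your bound $k_3\le\frac{3k-5}{2}$, the residual $d_{G'}(u,y_1)+d_{G'}(w,y_2)\ge\frac{11k-8}{3}-\frac{3k-3}{2}-(k-1)=\frac{7k-1}{6}$, whose larger half $\frac{7k-1}{12}$ is strictly less than $k$ for all $k$, so you cannot extract a segment of length at least $k$ to hand to Corollary~\ref{Cr-precoloring-f}. The proof in the paper uses two strictly stronger facts at exactly this point: $d_{G'}(v,z)\le k-1$ from Claim~\ref{CL:nothread-good-end}, and the combined bound $d_{G'}(v,u)+d_{G'}(v,w)\le\frac{2k}{3}$ coming from the deleted $(u,w)$-thread of $C_v$ being a \emph{longest} thread (hence at least as long as each of the $v$-$u$ and $v$-$w$ arcs). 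With those, the residual is $2k-\frac{5}{3}$, the larger half exceeds $k-1$, and the argument closes cleanly through Claim~\ref{CL:nothread-good-end} at $u$ or $w$. So the threshold $\frac{11k-17}{3}$ is tuned to the $\frac{2k}{3}$ bound and the $k-1$ bound, neither of which appears in your outline; with only $\frac{k-1}{2}$ per arc the arithmetic genuinely fails.
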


\noindent {\em Proof of Claim \ref{CL:no-3-thread-f}.}
Suppose to the contrary that $G'$ has a $(k_1,k_2,k_3)$-vertex $v$ such that $k_1+k_2+k_3\geq \frac{11k-17}{3}$ with end vertices ${x,y,z}$. Then $d_{G'}(v,x)+d_{G'}(v,y)+d_{G'}(v,z)\geq \frac{11k-8}{3}$.

If there are no two edges incident to $v$ in $G'$ lying in a common $k$-cycle of $G$, then  we may assume, without loss of generality, that $d_{G'}(v,x)\ge\frac{1}{3}(d_{G'}(v,x)+d_{G'}(v,y)+d_{G'}(v,z))>k$. Now the $(x,v)$-thread  from $x$ to $v$ has length at least $k$  with $v$ as a good end vertex, which is a contradiction to Claim \ref{CL:nothread-good-end}.


If there exist two edges incident to $v$ in $G'$ containing in a $k$-cycle $C_v$ of $G$, then we may suppose that $C_v$ has no common vertex other than $v$ with the $(v,z)$-thread $T(v,z)$.
Thus $v$ is a good end vertex of the $(v,z)$-thread $T(v,z)$, and so $d_{G'}(v,z)\le k-1$ by Claim \ref{CL:nothread-good-end}. Let $u$ be the common vertex of $C_v$ and the  $(v,x)$-thread $T(v,x)$ such that $d_G(v,u)$ as large as possible, and let $w$ be the common vertex of $C_v$ and the $(v,y)$-thread $T(v,y)$ such that $d_G(v,w)$ as large as possible. By the construction of $G'$, we have $d_{G'}(v,u)+d_{G'}(v,w)\leq \frac{2k}{3}$, since the deleted $(u,w)$-thread is a longest thread in $C_v$.  Now we have
\begin{eqnarray*}
  d_{G'}(x,u)+d_{G'}(y,w)&=& d_{G'}(v,x)+d_{G'}(v,y)+d_{G'}(v,z) - (d_{G'}(v,u)+d_{G'}(v,w))-d_{G'}(v,z)\\
  &\ge& \frac{11k-8}{3}- \frac{2k}{3} - (k-1) =2k-\frac{5}{3}.
\end{eqnarray*}
Thus $\max\{d_{G'}(x,u), d_{G'}(y,w)\}\ge k$, say $d_{G'}(x,u)\geq k$. Hence the $(x,u)$-thread $T(x,u)$ is of length at least $k$ with $u$ as a good end vertex, a contradiction to Claim \ref{CL:nothread-good-end}. \q

%
%
%
%

~

Now we  complete the proof  by a discharging method on $G'$.

Let $F(G')$ be the set of faces of $G'$. From Euler Formula, we have
    \begin{equation}\label{EQ:sum-f-k}
    \sum_{v\in V(G')}(\frac{k-2}{2}d_{G'}(v)-k)+\sum_{f\in F(G')}(d_{G'}(f)-k)=-2k.
    \end{equation}

    Assign an initial charge $ch_0(v)=\frac{k-2}{2}d_{G'}(v)-k$ for each $v\in V(G')$, and $ch_0(f)=d_{G'}(f)-k$ for each $f\in F(G')$. Hence the total charge is $-2k$ by the Eq. (\ref{EQ:sum-f-k}).

    ~

    We redistribute the charges according to the following rules.

   {\em \noindent{\bf (R1)} Every $\lceil\frac{22k}{3}\rceil^+$-face  of $G'$ gives charge $\frac{19}{22}$ to each of its incident vertices.

    \noindent{\bf (R2)} Every $3^+$-vertex  of $G'$ gives charge $\frac{3}{22}$ to each of its weakly adjacent $2$-vertices.
    }

~

Let $ch$ denote the charge assignment after performing the charge redistribution
using  rules (R1) and (R2).

  \begin{claim}\label{CL:charge-face-f}
$ch(f)\geq 0$ for $f\in F(G')$.
  \end{claim}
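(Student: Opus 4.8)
The plan is to show that each face of $G'$ ends up with nonnegative charge after applying rules (R1) and (R2). The only rule that moves charge off a face is (R1), so the analysis splits according to the degree of $f$. First I would handle small faces: if $k\le d_{G'}(f)<\lceil\frac{22k}{3}\rceil$, then $f$ is not a $\lceil\frac{22k}{3}\rceil^+$-face, so it neither sends nor receives any charge, hence $ch(f)=ch_0(f)=d_{G'}(f)-k\ge 0$ since $G'$ has girth at least $k$. (There are no faces of degree strictly between $k+1$ and $\lfloor\frac{22k}{3}\rfloor$ because $G'$ has no cycles of those lengths, but this is not even needed for the inequality.)

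The remaining case is $d_{G'}(f)\ge\lceil\frac{22k}{3}\rceil$. Here (R1) forces $f$ to give $\frac{19}{22}$ to each incident vertex; counting with multiplicity (a cut edge contributing twice), $f$ is incident to exactly $d_{G'}(f)$ vertex-slots, so it gives away at most $\frac{19}{22}d_{G'}(f)$. Therefore
\begin{eqnarray*}
ch(f) &\ge& ch_0(f)-\frac{19}{22}d_{G'}(f)\\
&=& \bigl(d_{G'}(f)-k\bigr)-\frac{19}{22}d_{G'}(f)\\
&=& \frac{3}{22}d_{G'}(f)-k\\
&\ge& \frac{3}{22}\cdot\frac{22k}{3}-k \;=\; 0,
\end{eqnarray*}
where the last inequality uses $d_{G'}(f)\ge\lceil\frac{22k}{3}\rceil\ge\frac{22k}{3}$. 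Thus $ch(f)\ge 0$ in all cases, completing the proof of the claim.

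I do not expect any genuine obstacle here: this is the routine "face side" of a discharging argument, and the constants $\frac{19}{22}$ and $\lceil\frac{22k}{3}\rceil$ in rules (R1)–(R2) were evidently chosen precisely so that $\frac{3}{22}\cdot\frac{22k}{3}=k$ balances exactly. The real work of the discharging scheme will instead fall on the companion claim bounding $ch(v)$ for vertices, where one must invoke the reducibility statements (Claims \ref{CL:nothread-f} and \ref{CL:no-3-thread-f}) to control how many weakly adjacent $2$-vertices a high-degree vertex can be forced to feed via (R2); that is where the parameter $\frac{11k-17}{3}$ and the $2$-connectivity of $G'$ get used.
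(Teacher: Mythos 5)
Your proof is correct and follows essentially the same route as the paper: small faces keep their (nonnegative) initial charge, and for $\lceil\frac{22k}{3}\rceil^+$-faces the computation $ch(f)=(d_{G'}(f)-k)-\frac{19}{22}d_{G'}(f)=\frac{3}{22}d_{G'}(f)-k\ge 0$ is exactly the paper's. The only cosmetic difference is that you allow for faces of intermediate degree (covered by the same inequality), whereas the paper simply notes that only $k$-faces and $\lceil\frac{22k}{3}\rceil^+$-faces occur.
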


\noindent {\em Proof of Claim \ref{CL:charge-face-f}.}
Clearly, each $k$-face $f$ has charge $ch(f)=ch_0(f)= 0$. Each $\lceil\frac{22k}{3}\rceil^+$-face $f$ sends charge $\frac{19}{22}$ to each incident vertices by (R1). So $ch(f)=ch_0(f)-\frac{19}{22}d_{G'}(f)=(d_{G'}(f)-k)-\frac{19}{22}d_{G'}(f)=\frac{3}{22}d_{G'}(f)-k\geq 0$ as $d_{G'}(f)\geq \lceil\frac{22k}{3}\rceil$.
\q

 \begin{claim}\label{CL:charge-vertex-f}
$ch(v)\geq 0$ for $v\in V(G')$.
  \end{claim}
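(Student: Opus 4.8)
The plan is to verify $ch(v)\ge 0$ by a straightforward case analysis on $d_{G'}(v)$, exactly parallel to the vertex-charge argument already used in the proof of Theorem \ref{THM-main1-p} (Claim \ref{CL:charge-vertex}), but with the weights from rules (R1)--(R2). First consider a $2$-vertex $v$. Then $ch_0(v)=\frac{k-2}{2}\cdot 2-k=-2$. By Claim \ref{CL:2-conected-f} together with Claim \ref{CL:nothread-f}, $v$ lies on a thread whose two end vertices are $3^+$-vertices, so $v$ is weakly adjacent to two $3^+$-vertices and receives $2\cdot\frac{3}{22}$ by (R2). Since $G'$ has girth at least $k$ and no cycle of length from $k+1$ to $\lfloor\frac{22k}{3}\rfloor$, any face incident with $v$ of length $>k$ must in fact have length $\ge\lceil\frac{22k}{3}\rceil$; because $v$ is a $2$-vertex it cannot lie on two $k$-faces (its two incident faces share the two edges at $v$, which would force a short cycle), so $v$ is incident to at least one $\lceil\frac{22k}{3}\rceil^+$-face — and in fact, using Claim \ref{CL:nothread-good-end}/Claim \ref{CL:nothread-f} one checks a $2$-vertex interior to a long thread is incident to two such faces — hence receives $2\cdot\frac{19}{22}$ by (R1). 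This gives $ch(v)\ge -2+\frac{6}{22}+\frac{38}{22}=0$. (The one subtlety here is to argue that a $2$-vertex cannot be incident to a $k$-face of $G'$ at all: by construction of $G'$ each facial $k$-cycle of $G'$ has no $2$-vertex, so both faces at $v$ are $\lceil\frac{22k}{3}\rceil^+$-faces, and the bound is clean.)

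Now suppose $d_{G'}(v)=d\ge 3$. Then $ch_0(v)=\frac{k-2}{2}d-k$, and $v$ receives $\frac{19}{22}d$ from its incident faces by (R1) (every face at $v$ of length $>k$ has length $\ge\lceil\frac{22k}{3}\rceil$; a face of length exactly $k$ contributes $0$, which only helps). Let $t(v)$ denote the number of $2$-vertices weakly adjacent to $v$; by (R2), $v$ sends out $\frac{3}{22}t(v)$. The key combinatorial input is an upper bound on $t(v)$: each thread emanating from $v$ contributes its internal $2$-vertices, and by Claim \ref{CL:nothread-f} every thread of $G'$ has at most $\frac{3k-5}{2}$ internal vertices, while by Claim \ref{CL:no-3-thread-f} (together with Claim \ref{CL:nothread-f}) the total number of internal $2$-vertices on the threads around a vertex of degree $d\ge 3$ is at most $\frac{11k-17}{3}$ when $d=3$, and in general at most roughly $\frac{11k-17}{3}+(d-3)\cdot\frac{3k-5}{2}$. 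Plugging the worst case into
\[
ch(v)\ \ge\ \Big(\tfrac{k-2}{2}d-k\Big)+\tfrac{19}{22}d-\tfrac{3}{22}t(v)
\]
and simplifying should leave a nonnegative quantity for all $d\ge 3$; the constant $\frac{22k}{3}$ in the hypothesis and the weights $\frac{19}{22},\frac{3}{22}$ are precisely tuned so that the $d=3$ case is the tight one and comes out to $0$ (or a small positive number), with larger $d$ giving more slack because each extra unit of degree contributes $\frac{k-2}{2}+\frac{19}{22}$ to the positive side against at most $\frac{3}{22}\cdot\frac{3k-5}{2}$ to the negative side, and $\frac{k-2}{2}+\frac{19}{22}>\frac{3}{22}\cdot\frac{3k-5}{2}$ for all odd $k\ge 5$.

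I expect the main obstacle to be the bookkeeping for $t(v)$ when $v$ has degree exactly $3$ and two of the three threads at $v$ close up through a common $k$-cycle of $G$: in that situation Claim \ref{CL:no-3-thread-f} does not directly apply to the $(k_1,k_2,k_3)$-thread of $G'$, and one must instead invoke Claim \ref{CL:F(t-s)} (via the bull-necklace structure) to cap the relevant thread lengths, as was done inside the proof of Claim \ref{CL:no-3-thread-f}. Concretely, one uses that the "long" thread deleted from such a $k$-cycle forces $d_{G'}(v,u)+d_{G'}(v,w)\le \frac{2k}{3}$ and $d_{G'}(v,z)\le k-1$, so that $t(v)$ is bounded by about $\frac{2k}{3}+(k-1)$ rather than the naive $3\cdot\frac{3k-5}{2}$; this smaller bound is what makes the inequality hold. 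Once these subcases are separated the remaining computations are routine arithmetic in $k$. Combining Claims \ref{CL:charge-face-f} and \ref{CL:charge-vertex-f} with Eq.~(\ref{EQ:sum-f-k}) then yields $-2k=\sum_{v}ch(v)+\sum_f ch(f)\ge 0$, a contradiction, completing the proof of Theorem \ref{THM:main-fraction-k}.
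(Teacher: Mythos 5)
Your overall plan is the right one and is essentially the paper's: split on $d_{G'}(v)$, use Claims~\ref{CL:nothread-f} and~\ref{CL:no-3-thread-f} to bound the number $t(v)$ of weakly adjacent $2$-vertices, and observe that $d_{G'}(v)=3$ is the tight case. The $d_{G'}(v)=2$ computation is correct and matches the paper's.

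However, the $d_{G'}(v)\ge 3$ part has a genuine gap, and it is precisely the bookkeeping you flagged as a "main obstacle", though the obstacle is not what you thought it was. Your inequality $ch(v)\ge(\tfrac{k-2}{2}d-k)+\tfrac{19}{22}d-\tfrac{3}{22}t(v)$ overcounts the face charge: a face of length exactly $k$ contributes nothing by (R1), so "$v$ receives $\tfrac{19}{22}d$ from its incident faces" is simply false whenever $v$ lies on a facial $k$-cycle of $G'$, and your parenthetical "which only helps" is backwards---having a $k$-face reduces the charge received, which hurts. The reason the argument still closes is a different compensation: by the construction of $G'$, facial $k$-cycles of $G'$ contain no $2$-vertices, so a $k$-face at $v$ also removes two threads' worth of potential $2$-vertex recipients from $t(v)$. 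The paper makes this tradeoff explicit by introducing the parameter $r(v)$ (the number of facial $k$-cycles at $v$): the face charge becomes $\tfrac{19}{22}(d_{G'}(v)-r(v))$ and the thread bound becomes $t(v)\le \tfrac{3k-5}{2}(d_{G'}(v)-2r(v))$, after which the $d\ge 4$ case reduces to $ch(v)\ge \tfrac{13k+9}{44}d_{G'}(v)-k+\tfrac{9k-34}{22}r(v)\ge \tfrac{2k+9}{11}>0$, and $d=3$ splits into $r(v)=1$ (only one thread leaves $v$, so $t(v)\le\tfrac{3k-5}{2}$, giving $\tfrac{13k-41}{44}$) and $r(v)=0$ (Claim~\ref{CL:no-3-thread-f} gives $t(v)\le\tfrac{11k-17}{3}$, yielding exactly $\tfrac{4}{11}$). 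Without $r(v)$, your displayed inequality is not a valid lower bound and your "plugging the worst case in\dots should leave a nonnegative quantity" is unverified.

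Separately, the worry you raise about Claim~\ref{CL:no-3-thread-f} "not directly applying" when two threads at $v$ close up through a $k$-cycle of $G$, and about needing to re-invoke Claim~\ref{CL:F(t-s)} and the bull-necklace machinery inside the charge count, is a misreading: Claim~\ref{CL:no-3-thread-f} is already stated (and proved) for all $(k_1,k_2,k_3)$-threads of $G'$, including that configuration; the $F_v(t,s)$/bull-necklace analysis lives entirely inside the proof of that claim and does not need to reappear in the discharging. What the charge count does need to distinguish is whether $v$ lies on a facial $k$-cycle \emph{of $G'$} (that is, $r(v)\ge 1$), which is a different dichotomy.
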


\noindent {\em Proof of Claim \ref{CL:charge-vertex-f}.}
Let $v$ be a vertex of $G'$. Then $d_{G'}(v)\geq 2$ by Claim \ref{CL:2-conected-f} and the construction of $G'$.

First we assume $d_{G'}(v)=2$. Then $ch_0(v)=-2$. By Claims \ref{CL:2-conected-f} and \ref{CL:nothread-f}, $v$ is weakly adjacent to two $3^+$-vertex, and thus $v$ receives charge $\frac{3}{22}\times 2$ by (R2).
By (R1), $v$ receives charge $\frac{19}{22}\times 2$ from its two incident faces. Hence
$ch(v)=-2+\frac{3}{22}\times 2+\frac{19}{22}\times 2=0$.

Now we assume $d_{G'}(v)\geq 3$. Let $t(v)$ be the number of $2$-vertices weakly adjacent to $v$. Suppose $v$ is adjacent to $r(v)$ facial $k$-cycles.  Since $G'$ contains no cycles of length from $k+1$ to $\lfloor\frac{22k}{3}\rfloor$, any two $k$-cycles of $G'$ have no edges in common, and thus $r(v)\leq \frac{d_{G'}(v)}{2}$. By Claim \ref{CL:nothread-f} and by the construction of $G'$, each thread incident to $v$ contains at most $(\frac{3k-3}{2}-1)$ $2$-vertices and each $k$-cycle contains no $2$-vertices, and so we have $t(v)\leq \frac{3k-5}{2}(d_{G'}(v)-2r(v))$. By (R1), $v$ receives charge $\frac{19}{22}(d_{G'}(v)-r(v))$ from its incident faces. By (R2), $v$ sends $3/22$ to each of its weakly adjacent $2$-vertices. Therefore, we have
\begin{equation}\label{EQ:chv322}
  ch(v)=(\frac{k-2}{2}d_{G'}(v)-k)+\frac{19}{22}(d_{G'}(v)-r(v))-\frac{3}{22}t(v).
\end{equation}

Assume that $d_{G'}(v)\geq 4$. By Eq. (\ref{EQ:chv322}), it follows from $t(v)\leq \frac{3k-5}{2}(d_{G'}(v)-2r(v))$  that
\begin{eqnarray*}
  ch(v) &\geq &\frac{k-2}{2}d_{G'}(v)-k+\frac{19}{22}(d_{G'}(v)-r(v))-\frac{3}{22}\times\frac{3k-5}{2}(d_{G'}(v)-2r(v))\\
  &=& \frac{13k+9}{44} d_{G'}(v)-k+\frac{9k-34}{22}r(v)\\
    &\geq & \frac{13k+9}{44} d_{G'}(v)-k\\
    &\geq & \frac{13k+9}{44} \cdot 4-k\\
     &=& \frac{2k+9}{11}>0.
\end{eqnarray*}

Assume instead that $d_{G'}(v)=3$. Then $ch_0(v)=\frac{k-6}{2}$ and $r(v)\leq 1$. If $r(v)=1$, then $t(v)\leq \frac{3k-5}{2}$ by Claim \ref{CL:nothread-f}. Thus by Eq.~(\ref{EQ:chv322}) we have  $ch(v)=\frac{k-6}{2}+\frac{19}{22}\times 2-\frac{3}{22}\times \frac{3k-5}{2}=\frac{13k-41}{44}\geq \frac{6}{11}$.
If $r(v)=0$, then $t(v)\leq \frac{1}{3}(11k-17)$ by Claim \ref{CL:no-3-thread-f}. Thus by Eq.~(\ref{EQ:chv322}) we have $ch(v)=\frac{k-6}{2}+\frac{19}{22}\times 3-\frac{3}{22}\times \frac{11k-17}{3}=\frac{4}{11}>0$. This proves Claim \ref{CL:charge-vertex-f}.

\q

Combining Eq. (\ref{EQ:sum-f-k}), Claims \ref{CL:charge-face-f} and \ref{CL:charge-vertex-f},
we have
$$-2k= \sum_{v\in V(G')}ch_0(v)+\sum_{f\in F(G')}ch_0(f)=\sum_{v\in V(G')}ch(v)+\sum_{f\in F(G')}ch(f)\geq 0,$$
a contradiction. This contradiction finishes the proof of Theorem \ref{THM:main-fraction-k}.
\end{proof}

\section{Concluding Remarks}\label{sect5}
In this paper, we obtain two Steinberg-type results on circular coloring and fractional coloring as Theorems \ref{THMmain-p} and \ref{THM:main-fraction-k}. Improving the bound to $f(p)\le p(p-2)$ would provide solutions to Conjecture \ref{CONJ:circular-conj} for $t=p-1$ when $p\ge 5$ is a prime, and completely determining the value $f(p)$ seems to be  more challenging.   Theorem \ref{THM:main-fraction-k} confirms the fractional coloring version of Conjecture \ref{CONJ:f(p)} for $p\ge 11$,  since $\frac{22p}{3}\le p(p-2)$ when $p\ge 11$. In a followup paper \cite{HL20}, we also verify the remaining cases ($p=5,7$) of the fractional coloring version of Conjecture \ref{CONJ:f(p)} with refined arguments and additional configurations. Those results provide evidences to Conjectures \ref{CONJ:circular-conj} and \ref{CONJ:f(p)}.

A nature question is to consider  variations of Question \ref{QEST:f(k)} concerning odd cycles. However,  naive odd cycle versions of Theorems \ref{THMmain-p} and \ref{THM:main-fraction-k} are false, i.e., for any $t>\frac{k-1}{2}$, there exist  planar graphs $G$ of odd girth $k$ without odd cycles of length from $k+2$ to $2t+1$ satisfying $\chi_c(G)\geq \chi_f(G)>\frac{2k}{k-1}$. To see this, we construct a graph $G$ by taking $2t$ disjoint copies of $k$-cycle, where each $k$-cycle contains two distinguished edges $x_iy_i,y_iz_i$ for each $i\in[2t]$, adding edges $x_iy_{i+1}, z_iy_{i+1}$ for each $i\in[2t-1]$, and adding a new vertex $v$ to connect edges $vx_{2t},vz_{2t},vy_1$. See Figure \ref{Fig:forbidodd} for the construction of $G$. We claim that $\chi_f(G)>\frac{2k}{k-1}$. In fact, if $\varphi$ is a fractional $(ka,\frac{k-1}{2}a)$-coloring of $G$, then it is easy to show, by an  argument similar to Lemmas \ref{LEM:path-p} and \ref{LEM:cyclep}, that  $|\varphi(x_i)\cap \varphi(z_i)|=\frac{k-1}{2}a-a$. This implies $\varphi(y_i)=\varphi(y_{i+1})$ for each $i\in[2t-1]$ and $\varphi(y_{2t})=\varphi(v)$, which indicates $\varphi(y_1)=\varphi(v)$. But there is an edge $y_1v$ between $y_1$ and $v$, a contradiction. Hence $\chi_c(G)\geq \chi_f(G)>\frac{2k}{k-1}$.

\begin{figure}[h]
\begin{center}
  \begin{tikzpicture}

\tikzstyle{mynodestyle} = [draw,shape=circle,outer sep=0,inner sep=1.2,minimum size=3.5,fill=black]
\tikzstyle{myedgestyle} = [line width=0.6pt, black]

\node  [draw,shape=circle,outer sep=0,inner sep=1.2,minimum size=4.5,fill=black] (v2) at (-6,2) {};

\node  [draw,shape=circle,outer sep=0,inner sep=1.2,minimum size=3.5,fill=black] (v21) at (-4,2) {};

\node  [draw,shape=circle,outer sep=0,inner sep=1.2,minimum size=3.5,fill=black] (v22) at (-2,2) {};

\node  [draw,shape=circle,outer sep=0,inner sep=1.2,minimum size=3.5,fill=black] (v23) at (0,2) {};

\node  [draw,shape=circle,outer sep=0,inner sep=1.2,minimum size=3.5,fill=black] (v24) at (2,2) {};

\node  [draw,shape=circle,outer sep=0,inner sep=1.2,minimum size=3.5,fill=black] (v28) at (4,2) {};
\node  [draw,shape=circle,outer sep=0,inner sep=1.2,minimum size=4.5,fill=black] (v25) at (6,2) {};

\node [mynodestyle] (v1) at (-5,3.5) {};
\node [mynodestyle] (v3) at (-5,2.9) {};
\node [mynodestyle] (v4) at (-5,2.3) {};
\node [mynodestyle] (v5) at (-5,1.7) {};
\node [mynodestyle] (v6) at (-5,1.1) {};
\node [mynodestyle] (v7) at (-5,0.5) {};

\node [mynodestyle] (v8) at (-3,3.5) {};
\node [mynodestyle] at (-3,2.9) {};
\node [mynodestyle] at (-3,2.3) {};
\node [mynodestyle] at (-3,1.7) {};
\node [mynodestyle] at (-3,1.1) {};
\node [mynodestyle] (v9) at (-3,0.5) {};

\node [mynodestyle] (v10) at (-1,3.5) {};
\node [mynodestyle] at (-1,2.9) {};
\node [mynodestyle] at (-1,2.3) {};
\node [mynodestyle] at (-1,1.7) {};
\node [mynodestyle] at (-1,1.1) {};
\node [mynodestyle] (v11) at (-1,0.5) {};

\node [mynodestyle] (v12) at (1,3.5) {};
\node [mynodestyle] at (1,2.9) {};
\node [mynodestyle] at (1,2.3) {};
\node [mynodestyle] at (1,1.7) {};
\node [mynodestyle] at (1,1.1) {};
\node [mynodestyle] (v13) at (1,0.5) {};

\node [mynodestyle] (v14) at (3,3.5) {};
\node [mynodestyle] at (3,2.9) {};
\node [mynodestyle] at (3,2.3) {};
\node [mynodestyle] at (3,1.7) {};
\node [mynodestyle] at (3,1.1) {};
\node [mynodestyle] (v15) at (3,0.5) {};

\node [mynodestyle] (v16) at (5,3.5) {};
\node [mynodestyle]  at (5,2.9) {};
\node [mynodestyle]  at (5,2.3) {};
\node [mynodestyle]  at (5,1.7) {};
\node [mynodestyle]  at (5,1.1) {};
\node [mynodestyle] (v17) at (5,0.5) {};

\draw [myedgestyle] (v1) edge (v3);
\draw [myedgestyle] (v4) edge (v3);
\draw [myedgestyle] (v4) edge (v5);
\draw [myedgestyle] (v6) edge (v7);
\draw [myedgestyle] (v5) edge (v6);
\draw [myedgestyle] (v8) edge (v9);
\draw [myedgestyle] (v10) edge (v11);
\draw [myedgestyle] (v12) edge (v13);
\draw [myedgestyle] (v14) edge (v15);

\draw [myedgestyle] (v2) edge (v1);
\draw [myedgestyle] (v2) edge (v7);
\draw [myedgestyle] (v1) edge (v21);
\draw [myedgestyle] (v7) edge (v21);
\draw [myedgestyle] (v21) edge (v8);
\draw [myedgestyle] (v21) edge (v9);
\draw [myedgestyle] (v9) edge (v22);
\draw [myedgestyle] (v22) edge (v8);
\draw [myedgestyle] (v22) edge (v10);
\draw [myedgestyle] (v22) edge (v11);
\draw [myedgestyle] (v11) edge (v23);
\draw [myedgestyle] (v23) edge (v10);
\draw [myedgestyle] (v23) edge (v12);
\draw [myedgestyle] (v23) edge (v13);
\draw [myedgestyle] (v13) edge (v24);
\draw [myedgestyle] (v24) edge (v12);
\draw [myedgestyle] (v24) edge (v14);
\draw [myedgestyle] (v24) edge (v15);

\draw [myedgestyle] plot[smooth, tension=.7] coordinates {(v2)};
\draw [myedgestyle] plot[smooth, tension=.7] coordinates {(v2)};
\draw [myedgestyle] plot[smooth, tension=.7] coordinates {(v2) (-5.3,4) (5.3,4) (v25)};

\node at (-6.1,1.7) {\small $y_1$};
\node at (-4,1.62) {\small $y_2$};
\node at (-2,1.62) {\small $y_3$};
\node at (0,1.62) {\small $y_4$};
\node at (2,1.62) {\small $y_5$};
\node at (4,1.62) {\small $y_6$};
\node at (6.1,1.7) {\small $v$};

\node at (-5,3.75)  {\small $x_1$};
\node at (-5,0.25) {\small $z_1$};
\node at (-3,3.75)  {\small $x_2$};
\node at (-3,0.25) {\small $z_2$};
\node at (-1,3.75)  {\small $x_3$};
\node at (-1,0.25) {\small $z_3$};
\node at (1,3.75)  {\small $x_4$};
\node at (1,0.25) {\small $z_4$};
\node at (3,3.75)  {\small $x_5$};
\node at (3,0.25) {\small $z_5$};
\node at (5,3.75)  {\small $x_6$};
\node at (5,0.25) {\small $z_6$};

\draw [myedgestyle] (v14) edge (v28);
\draw [myedgestyle] (v28) edge (v15);
\draw [myedgestyle] (v28) edge (v16);
\draw [myedgestyle] (v28) edge (v17);
\draw [myedgestyle] (v16) edge (v25);
\draw [myedgestyle] (v25) edge (v17);
\draw [myedgestyle] (v16) edge (v17);
\end{tikzpicture}
\end{center}
\caption{Construction of $G$ when $t=3$ and $k=7$.}\label{Fig:forbidodd}
\end{figure}
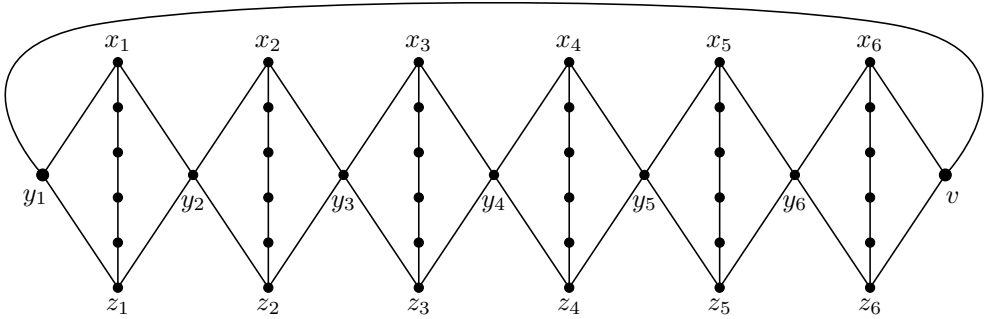

 Two $k$-cycles are called {\em adjacent} if they share at least one common edge. Notice that the above constructed graph $G$ contains adjacent $k$-cycles. It would be possible to consider the following modified odd cycle versions without adjacent  $k$-cycles.

\begin{question}
  Does there exist a smallest number $g(p)$ for each prime $p\ge 3$ such that every planar graph of odd girth $p$ without adjacent $p$-cycles and without odd cycles of length from $p+2$ to $g(p)$ is $C_{p}$-colorable?
  \end{question}
  The results from \cite{BGMR09,CHKLS17, Xu06} imply that $g(3)=7$.   It would be interesting to show the existence of $g(p)$ for every prime $p\ge 5$. Furthermore, is it true that $g(p)\le f(p)+1$?

  A similar question arises for fractional coloring.
  \begin{question}
  Does there exist a smallest number $h(k)$ for each odd integer $k\ge 3$ such that every planar graph of odd girth $k$ without adjacent $k$-cycles and without odd cycles of length from $k+2$ to $h(k)$ is fractional $(k:\frac{k-1}{2})$-colorable?
  \end{question}
  From Theorem \ref{THM:main-fraction-k}, it is plausible that $h(k)$ exists as a linear function of $k$.

\end{document}